\newtheorem{thm}{Theorem}[section]
\newtheorem{lem}[thm]{Lemma}
\newtheorem{conj}[thm]{Conjecture}
\theoremstyle{definition} 
\newtheorem{defn}[thm]{Definition} 
\newtheorem{exa}[thm]{Example}
\newcommand{\ub}{\underbrace}
\def\BHR{\mathop{\rm BHR}}
\begin{document}

\title{Growable Realizations: a Powerful Approach to the Buratti-Horak-Rosa Conjecture}

\author[1]{M.~A.~Ollis}
\author[2]{Anita Pasotti} 
\author[3]{Marco A.~Pellegrini}
\author[4]{John R.~Schmitt}

\affil[1]{Marlboro Institute for Liberal Arts and Interdisciplinary Studies, Emerson College, Boston, MA 02116, USA} 
\affil[2]{DICATAM, Sez.~Matematica, Universit\`a degli Studi di Brescia, Via Branze~43, I~25123 Brescia, Italy}
\affil[3]{Dipartimento di Matematica e Fisica, Universit\`a Cattolica del Sacro Cuore, Via Musei~41, I~25121 Brescia, Italy}
\affil[4]{Mathematics Department, Middlebury College, Middlebury, VT 05753, USA}

\maketitle

\begin{abstract}
Label the vertices of the complete graph $K_v$ with the integers $\{ 0, 1, \ldots, v-1 \}$ and define the {\em length} of the edge between $x$ and $y$ to be $\min( |x-y| , v - |x-y|  )$.   Let $L$ be a multiset of size $v-1$ with underlying set contained in $\{ 1, \ldots, \lfloor v/2 \rfloor \}$.  The Buratti-Horak-Rosa Conjecture is that there is a Hamiltonian path in $K_v$ whose edge lengths are exactly $L$ if and only if for any divisor $d$ of $v$ the number of multiples of $d$ appearing in $L$ is at most $v-d$.

We introduce ``growable realizations," which enable us to prove many new instances of the conjecture and to reprove known results in a simpler way.  
As examples of the new method, we give a complete solution when the underlying set is contained in $\{ 1,4,5 \}$ or in $\{ 1,2,3,4 \}$ and a partial result when the underlying set has the form $\{ 1, x, 2x \}$. We believe that 
for any set~$U$ of positive integers there is a finite set of growable realizations that implies the truth of the Buratti-Horak-Rosa Conjecture for all but finitely many multisets with underlying set~$U$.

MSC: 05C38, 05C78.
\end{abstract}

\section{Introduction}\label{sec:intro}

Let $K_v$ be the complete graph on $v$ vertices, labeled with the integers $\{ 0,1, \ldots, v-1 \}$.  For two vertices $x$ and $y$, define the {\em length} of the edge between them to be 
$$\ell(x,y) = \min( |x-y| , \ v - |x-y|  ),$$ 
which is an integer in the range $1 \leq \ell(x,y) \leq \lfloor v/2 \rfloor$.

A Hamiltonian path~${\bm h} = [h_1, h_2, \ldots, h_{v}]$ in $K_v$ uses $v-1$ edges and gives a multiset~$L = \{ \ell(h_i, h_{i+1}) : 1 \leq i \leq v-1 \}$ of edge-lengths.  Call ${\bm h}$ a {\em realization} of $L$ or say that ${\bm h}$ {\em realizes} $L$.  For example, with $v=7$ the Hamiltonian path $[0,5,1,2,6,3,4]$ has edge-length sequence $[2,3,1,3,3,1]$ and hence realizes the multiset $\{ 1^2, 2, 3^3 \}$ (where exponents indicate multiplicity).

Given a multiset~$L$, its underlying set is given by~$U =  \{ x : x \in L \}$.

The focus of our inquiry is the {\em Buratti-Horak-Rosa Conjecture}, or {\em BHR Conjecture}:

\begin{conj}\label{conj:bhr}
Let $L$ be a multiset of size $v-1$ with underlying set~$U$ contained in $\{ 1, \ldots, \lfloor v/2 \rfloor \}$.  Then there is a realization of~$L$ in $K_v$ if and only if for any divisor~$d$ of~$v$ the number of multiples of~$d$ in~$L$ is at most~$v-d$.
\end{conj}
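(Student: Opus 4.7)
The forward (necessity) direction is a short counting argument. Fix a divisor $d$ of $v$ and partition the vertex set of $K_v$ into the $d$ residue classes modulo $d$. An edge has length divisible by $d$ exactly when both its endpoints lie in the same class. The sequence of residue classes read along any Hamiltonian path must visit all $d$ classes and hence breaks into at least $d$ maximal constant runs, giving at least $d-1$ inter-class transitions. Consequently at most $(v-1)-(d-1)=v-d$ edges of the path can have length a multiple of $d$.

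The reverse direction is the substantive content of the conjecture and is open in full generality. My plan follows what the paper promises: the \emph{growable realizations} framework. The overarching strategy is an induction on the multiplicities of the elements of $L$. Starting from a small ``seed'' realization $\bm h$ of some multiset $L_0$ with underlying set $U$, one exhibits a local modification of $\bm h$ (a ``growth'') that produces a Hamiltonian path realizing $L_0 \cup M$, where $M$ is a prescribed multiset with entries drawn from $U$. If the growths available at each step are rich enough, every admissible $L$ with underlying set $U$ is reachable from some seed by a sequence of growths.

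I would proceed in three layers. First, dispose of all small $v$ by direct construction or computer search, producing a finite list of base cases outside the inductive reach. Second, for each underlying set $U$ of interest --- in the paper, $U\subseteq\{1,4,5\}$, $U\subseteq\{1,2,3,4\}$, and the family $U=\{1,x,2x\}$ --- produce a finite list of growable seeds together with growth operations, each adding a controlled multiset of new lengths from $U$. Third, verify by a modular and counting argument that every $L$ with underlying set $U$ satisfying the divisibility conditions is reachable from some seed via the available growths.

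The main obstacle is the second layer: designing seeds and growths whose joint reach exhausts the admissible region. The constraints bite hardest when $\gcd(v,x)>1$ for some $x\in U$, because the quotient structure modulo $\gcd(v,x)$ then forces delicate interleavings across residue classes that every growth must respect. A further technical point is that the multiplicities of different lengths in $L$ typically cannot be adjusted independently --- a single local modification often changes the counts of several lengths at once --- so the growth catalogue for each $U$ must be engineered to span the admissible lattice of multiplicity vectors, not merely hit a few points in it.
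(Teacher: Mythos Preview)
The statement you are attempting to prove is labeled \texttt{conj} in the paper: it is the Buratti--Horak--Rosa \emph{Conjecture}, and the paper does not claim to prove it. There is therefore no ``paper's own proof'' to compare against. Your necessity argument is correct and matches the known easy direction (attributed in the paper to Horak and Rosa~\cite{HR09}).

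For the sufficiency direction, you yourself note that it ``is open in full generality,'' and your three-layer plan is not a proof of the conjecture but a research programme for attacking specific underlying sets~$U$. This programme is exactly what the paper carries out for $U\subseteq\{1,2,3\}$, $U\subseteq\{1,4,5\}$, $U\subseteq\{1,2,3,4\}$, and (partially) $U=\{1,x,2x\}$, via Theorems~\ref{th:grow} and~\ref{th:multigrow} and long case analyses. But even the claim that such a programme \emph{can} succeed for every fixed~$U$ is itself stated only as a further conjecture in the paper (Conjecture~\ref{conj:grow}); your second layer simply assumes this. So as a proof of Conjecture~\ref{conj:bhr} there is a genuine gap: no argument is offered that growable seeds and growth operations exist in sufficient variety for an arbitrary underlying set, and the paper makes no such claim either.
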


When~$v$ is prime, in which case the condition on divisors is always satisfied, we have the original {\em Buratti Conjecture}, see~\cite{BM13,West}.  Horak and Rosa~\cite{HR09} generalize this to composite~$v$ and show that the condition on divisors is necessary; Pasotti and Pellegrini~\cite{PP14b} reformulate Horak and Rosa's statement into the one in Conjecture~\ref{conj:bhr}.  

Call a multiset~$L$ of size $v-1$ {\em admissible} if it has underlying set $U \subseteq \{ 1, \ldots, \lfloor v/2 \rfloor \}$ and it satisfies the divisor condition of the BHR conjecture.  Denote the BHR Conjecture for~$L$ by $\BHR(L)$.

Much work has been done on the BHR Conjecture.  Theorem~\ref{th:known} captures the main progress that has been made to date.

\begin{thm}\label{th:known}
Let $L$ be a multiset of size~$v-1$ with underlying set~$U$.  In each of the following cases, if $L$ is admissible, then it is realizable.
\begin{enumerate}
\item $|U| \leq 2$ \cite{DJ09,HR09},
\item $U = \{1,2,4\}, \{1,2,6\}, \{1,2,8 \}$ \cite{PP14}, 
\item $U \subseteq \{1,2,3,5\}$ \cite{CD10,PP14b}, 
\item $L = \{1^a, 2^b, 3^c, 4^d \}$ with either $a \geq 3$ and $c,d \geq 1$ or $a=2$ and $b,c,d \geq 1$ \cite{OPPS},
\item $L = \{1^a,2^b, x^c  \}$ when $x$ is even and $a+b \geq x-1$ \cite{PP14},
\item $L = \{1^a, x^b, (x+1)^c \}$ when $x$ is odd and either $a \geq \min( 3x-3, b+2x-3)$ or $a \geq 2x-2$ and  $c \geq 4b/3$ \cite{OPPS},
\item $L = \{1^a, x^b, (x+1)^c \}$ when $x$ is even and either $a \geq \min( 3x-1, c+2x-1)$ or $a \geq 2x-1$ and  $b \geq c$ \cite{OPPS},
\item $U \subseteq \{1, 2, 4, \ldots, 2x \}$  and $\{1^{2x-1}, 2x \} \subseteq L$ \cite{OPPS}, 
\item $U \subseteq \{1, 2, 4, \ldots, 2x, 2x+1 \}$  and $\{1^{6x-1}, 2x+1 \} \subseteq L$ \cite{OPPS},
\item $L = \{ 1^{a_1}, 2^{a_2}, \ldots, x^{a_x} \}$ with $a_1 \geq a_2 \geq \cdots \geq a_x$ \cite{Monopoli15,OPPS},
\item $L = M \cup \{1^a\}$ for any multiset~$M$ and~$a > a_M$, where $a_M$ is a constant that depends on~$M$ \cite{HR09},
\item $v \leq 19$ or $v=23$ \cite{Meszka}.
\end{enumerate}
\end{thm}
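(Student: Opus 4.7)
Since Theorem~\ref{th:known} is a catalogue of twelve independent results drawn from a dozen sources, any proof must proceed item by item; my plan is to organize the cases by the shape of the multiset and indicate, for each family, the construction I would attempt. Throughout, the ``only if'' direction is already given by the necessity argument of Horak and Rosa \cite{HR09}, so the task in each case reduces to exhibiting a realization of every admissible $L$ of the indicated form.

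For items 1--4, where the underlying set $U$ is small and explicit, I would first dispose of small $v$ by computer search and then look for a recursive family of constructions: given a realization of some $L$, I would try to extend it to a realization of $L \cup \{1^k\}$ or $L \cup \{x^k\}$ for some fixed $x \in U$ by inserting a short, prescribed pattern at a carefully chosen vertex. The admissibility condition should guarantee that no arithmetic obstruction (such as repeating a vertex modulo a divisor) arises when the pattern is inserted. For $U=\{1,2,3,5\}$ in particular, I expect to partition $L$ according to the parity of the exponents and handle each residue class separately.

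For items 5--10, which concern multisets dominated by 1's together with one or two larger lengths, my strategy would be to start from a canonical base path using only edges of length~1 (either $[0,1,\ldots,v-1]$ or a zig-zag variant) and then perform local modifications that swap a short run of length-1 steps for a detour using a length-$x$ or length-$(x+1)$ edge, adjusting the multiset without disturbing the global structure. The lower bounds on $a$ appearing in items 5--9 are precisely what is needed to ensure that enough disjoint detour slots are available. Item~11 is the extremal instance of this philosophy and I would follow the Horak--Rosa interval argument directly. Item~12 is purely computational: a pruned backtracking search over all admissible $L$ for each $v \leq 19$ and $v=23$, as carried out by Meszka.

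The principal obstacle, in my view, is the lack of a unifying framework: each family of multisets has historically required its own \emph{ad hoc} construction, and the detour and insertion arguments do not combine cleanly across families. A proof of Theorem~\ref{th:known} as a single coherent argument therefore seems out of reach with current tools, and this is precisely the gap that the notion of \emph{growable realization}, introduced in the sequel, is designed to address.
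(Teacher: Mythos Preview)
There is nothing to compare here, because the paper gives no proof of Theorem~\ref{th:known}. It is a survey statement: each of the twelve items is asserted on the authority of the cited reference, and the paper simply records them as the state of the art before introducing growable realizations. Your proposal, by contrast, sets out to reprove all twelve items from scratch, which is neither what the paper does nor what the statement requires.

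Beyond this mismatch of intent, the proposal itself is not a proof but a loose outline of heuristics (``insert a short prescribed pattern'', ``swap a run of length-1 steps for a detour''), none of which is carried far enough to establish any single item. Several of the sketches are also inaccurate as descriptions of how the cited results were actually obtained: for instance, items~6--9 in \cite{OPPS} are proved via concatenation of carefully parametrized standard and perfect linear realizations, not by local detours in a base path, and item~10 uses a quite different argument based on a specific explicit construction. If you wish to write a proof of Theorem~\ref{th:known}, the correct approach is simply to cite the listed references for each item; if you wish to reprove some of them more uniformly, that is precisely what the growable-realization machinery of Sections~\ref{sec:grow}--\ref{sec:1234} accomplishes for a subset of the cases, and you should defer to those sections rather than sketching ad~hoc arguments here.
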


After proving Theorem~\ref{th:known}.11, Horak and Rosa observe that ``to get an explicit bound... one only needs refer to lemmas used in the proof"~\cite{HR09}.  It turns out that their methodology can be used to give a bound that is linear in the elements of the underlying set and independent of their multiplicities, neither of which is clear from the statement of the result.  We believe that this is of interest and so give an explicit bound with these properties in Theorem~\ref{th:hr}.   

\begin{thm}\label{th:hr}
Let $M$ be a multiset with underlying set $U = \{ x_1, \ldots, x_k \}$, where $1 < x_1 < \cdots < x_k$.  Then $L = M \cup \{1^s \}$ is realizable for any $s \geq 3x_k - 5 + \sum_{i=1}^{k} x_i$.
\end{thm}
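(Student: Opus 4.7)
The plan is to induct on $k = |U|$, making Horak and Rosa's argument for Theorem~\ref{th:known}.11 quantitative. For the base case $k = 1$ we have $L = \{1^s, x_1^b\}$, which is handled by Theorem~\ref{th:known}.1 (the $|U| \leq 2$ result) for any admissible $s$; in particular $s \geq 4x_1 - 5 = 3x_1 - 5 + x_1$ is comfortably within range.

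For the inductive step, write $M = M' \cup \{x_k^b\}$ where $M'$ has underlying set $\{x_1, \ldots, x_{k-1}\}$ and $b \geq 1$. First I would invoke induction to obtain a realization~${\bm h}'$ of $M' \cup \{1^{s'}\}$ in $K_{v'}$, for $s' \geq 3x_{k-1} - 5 + \sum_{i<k} x_i$. Then I would extend~${\bm h}'$ by attaching, at one endpoint, a ``zig-zag'' block that uses all $b$ copies of $x_k$ together with at most $x_k + 3(x_k - x_{k-1})$ auxiliary 1-edges. The critical feature is that the extension's cost in 1-edges must be \emph{independent of $b$}: once a base structure of length roughly $x_k$ is in place at the endpoint, each additional copy of $x_k$ can be absorbed by a single length-$x_k$ step to a fresh vertex, so the overhead depends on $x_k$ only, not on $b$.

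The hard part will be the extension lemma itself: producing a zig-zag that (a)~avoids vertex collisions with ${\bm h}'$, (b)~leaves an endpoint structure compatible with the next inductive step if this is not the final one, and (c)~consumes exactly the number of 1-edges needed to match the constant $3x_k - 5$. Avoiding collisions requires a buffer of 1-edges at the attachment point of length close to $x_k$, which is what accounts for the $x_k$ contribution at each inductive level; the residual $3(x_k - x_{k-1})$ then telescopes through the induction into the one-time additive $3x_k - 5$. The bookkeeping is delicate but should work out provided each of Horak and Rosa's auxiliary lemmas is made explicit rather than merely asymptotic.
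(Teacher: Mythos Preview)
Your plan is not a proof: the extension lemma is the entire content, and you do not supply it. The cost $x_k + 3(x_k - x_{k-1})$ for the zig-zag block is not derived from any construction; it is reverse-engineered so that the telescoping sum lands on $3x_k - 5 + \sum x_i$. There is no reason to believe a block with exactly that many auxiliary $1$-edges can always be attached. More seriously, your inductive hypothesis only gives you a \emph{cyclic} realization of $M' \cup \{1^{s'}\}$ in $K_{v'}$. To attach anything at an endpoint and enlarge the ambient graph to $K_v$, you need a \emph{linear} (in fact standard or perfect linear) realization, so that absolute differences are preserved and the endpoint is in a known position; otherwise edge lengths can change under the embedding $K_{v'} \hookrightarrow K_v$, and there is no guarantee the endpoint of ${\bm h}'$ sits where your zig-zag needs it. Theorem~\ref{th:known}.1 does not hand you this structure, so even the base case is not secured for the induction you want to run.

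The paper's argument is structurally different. Rather than induct on $|U|$, it follows Horak and Rosa's proof directly: partition $M$ once into four pieces $L_1 \cup L_2 \cup L_3 \cup L_4$ and invoke four separate lemmas from~\cite{HR09} (Lemmas~3.7, 3.9, 3.12, 3.13), each of which produces a standard linear realization with an explicit bound $s_i$ on the number of $1$'s needed. These bound as $s_1 \leq (\sum_i x_i) - 1$ and $s_2, s_3, s_4 \leq x_k - 1$, and the concatenation theorem in~\cite{HR09} gives realizability for $s \geq s_1 + s_2 + s_3 + s_4 - 1$, yielding the stated constant. No induction on $k$, no telescoping, and no new extension lemma is needed: the work is already done in the four cited lemmas, and the paper's contribution is simply to read off the explicit bounds.
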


\begin{proof}[Proof Outline]
We give the steps required to establish the bound, referring to~\cite{HR09} for the specific details.

In the notation of \cite[Theorem~3.4]{HR09}, we partition  $M$ as  $L_1 \cup L_2 \cup L_3 \cup L_4$ in a certain way and then $M \cup \{1^s\}$ is realizable for all $s \geq s_1 + s_2 + s_3 + s_4 -1$, where each~$s_i$ is dependent on~$L_i$ for $1 \leq i \leq 4$.  Let $U_i$ be the underlying set of $L_i$ for $1 \leq i \leq 4$.  

By \cite[Lemma~3.12]{HR09}, we may take $s_1 = 1 - 2|U_1| + \sum_{x \in U_1} x$; hence $s_1 \leq \left(\sum_{i=1}^{k} x_i \right) -1$.
By \cite[Lemma~3.9]{HR09}, we may take $s_2 = \max(U_2) - 1$; hence $s_2 \leq x_k -1$. 
By \cite[Lemma~3.7]{HR09}, we may take $s_3 = \max(U_3) - 1$; hence $s_3 \leq x_k -1$.
By \cite[Lemma~3.13]{HR09}, we may take $s_4 = \max(U_4) -  |U_4|$; hence $s_4 \leq x_k -1$.

Combining these bounds we find that $L$ is realizable for all $s \geq 3x_k - 5 + \sum_{i=1}^{k} x_i$.
\end{proof}

The BHR Conjecture has close connections to many other problems and conjectures concerning sequences with distinct partial sums or subgraphs of~$K_v$ other than paths; see~\cite{OPPS} for more discussion of this.  A recent paper also makes a connection between the BHR Conjecture and the Traveling Salesman Problem~\cite{GW20}.

We are frequently concerned with the congruence classes of multiple elements with respect to multiple integers.  The following notation is useful for these situations:  if $x_i \equiv y_i \pmod{z_i}$ for $1 \leq i \leq k$, then write
$$(x_1, \ldots, x_k) \equiv (y_1, \ldots, y_k) \pmod{(z_1, \ldots, z_k)}.$$ 
We also need the notion of a {\em translation} of a sequence~${\bm h} = [ h_1, \ldots, h_v ]$ by an integer~$m$:
$${\bm h} + m = [h_1+m, \ldots, h_v + m].$$
The translation of a sequence produces the same multiset of absolute differences as the original sequence.

We require a lot of small examples of realizations for particular multisets.  These were mostly found using a heuristic algorithm implemented in GAP.  
Given a target multiset~$L$ of size~$v-1$, the algorithm starts from a random Hamiltonian path in~$K_v$ and keeps trying to move to a Hamiltonian path that is ``closer" to realizing~$L$---in the sense of trying to increase $|L \cap L'|$, where~$L'$ is the multiset realized by the path under consideration---by removing an edge from the path and reconnecting the two resulting paths in a different way.  If it gets stuck before finding a realization of~$L$, then it tries again from a different starting path.  For the fairly small values of~$v$ in which we are interested, this simple algorithm is sufficient to find the desired realizations quickly.
The programs are available on the ArXiv page for this paper.

The most far-reaching components of Theorem~\ref{th:known} were proved using ``linear" realizations.    A Hamiltonian path ${\bm h} = [h_1, h_2, \ldots, h_{v}]$ of $K_v$ defines a multiset of absolute differences $L = \{ |h_i - h_{i+1}| : 1 \leq i \leq v-1 \}$ with underlying set contained in $\{1,\ldots, v-1\}$.  In this situation,  ${\bm h}$ is a {\em linear realization} of~$L$.  If $h_1 = 0$, then the linear realization is {\em standard}; if $h_1 = 0$ and $h_{v} = v-1$, then the linear realization is {\em perfect}.  To emphasize the distinction between linear realizations and realizations, realizations as defined above are sometimes referred to as {\em cyclic realizations}.

Linear realizations are closely related to cyclic realizations.  For example, if each element in a multiset~$L$ of size~$v-1$ is at most $\lfloor v/2 \rfloor$, then a linear realization of~$L$ is also a cyclic realization of~$L$.  See~\cite{HR09} for further discussion.

What makes linear realizations so useful in addressing the BHR Conjecture, standard and perfect ones especially, is their ability to be combined and hence used in inductive arguments.  This is the approach taken by Horak and Rosa in~\cite{HR09} and the multisets~$L$ given in Theorem~\ref{th:hr} in fact have linear realizations that are also cyclic realizations for the same~$L$.

The main contribution of this work is to introduce an alternative object: the ``growable" realization, which we define in the next section.  These are cyclic realizations that can be used in inductive arguments in somewhat similar ways to linear ones.  

In Section~\ref{sec:145} we reprove, with a much shorter proof, the result from~\cite{CD10} that $\BHR(L)$ holds when $L$ has underlying set~$U = \{1,2,3 \}$ to illustrate that this new tool is, in some ways, more powerful than existing ones.
We go on to prove instances of the BHR Conjecture that seem beyond the reach of current techniques. In particular, we are able to add the following items to Theorem~\ref{th:known}:
\begin{itemize}
\item $U = \{ 1,4,5 \}$ (Section~\ref{sec:145}), 
\item $U \subseteq \{ 1,2,3,4 \}$ (Section~\ref{sec:1234}),
\item $L = \{ 1^a, x^b, (2x)^c \}$ when $a \geq x-2$, $c$ is even and $b \geq 5x-2+c/2$ (Section~\ref{sec:1_x_2x}),
\item $L = \{ 1^a, 3^b, 6^c \}$ when $c$ is odd and $b \geq 18+(c-1)/2$ (Section~\ref{sec:1_x_2x}).
\end{itemize}

\section{Growable Realizations}\label{sec:grow}

Growable realizations will let us move from solving $\BHR(L)$ to $\BHR(L \cup \{ x^x \})$ under certain circumstances.  When this can be done for multiple choices of~$x$, this is a powerful tool.

Take $x$ with $0<x \leq v/2$.  For a given $m$, with $0 \leq m < v$, we shall embed $K_v$ into $K_{v + x}$ as follows:
$$
y \mapsto
\begin{cases}
y \hspace{2mm}  \text{when } y\leq m,  \\
y+x \hspace{2mm}  \text{otherwise.}
\end{cases}
$$
This embedding preserves some edge lengths and increases others.   

\begin{defn}
Let ${\bm h} = [h_1, \ldots, h_v]$ be a cyclic realization of a multiset~$L$.  Take $x$ and $m$ with $0<x \leq v/2$ and $0 \leq m < v$.  If 
each $y$ with $m-x < y \leq m$ is incident with exactly one edge whose length is increased by the embedding and there is no other edge whose length is increased, then say that ${\bm h}$ is {\em $x$-growable at $m$}.
\end{defn}

\begin{exa}\label{ex:picture}
It is easy to see that the sequence
$[6,4,3,0,7,1,5,2,8]$
is a cyclic realization of $\{ 1, 2^2, 3^4, 4 \}$ and that it is $3$-growable at $2$.
In fact, we can represent this Hamiltonian path of $K_{9}$ writing in bold the vertices not involved by the embedding 
$K_9\hookrightarrow K_{12}$, and using the symbol $-$ for each edge whose length does not change and  the symbol $\cdots$ for 
each edge whose length increases by $3$:
$$6- 4- 3 \cdots \bm{0} - 7  - \bm{1} \cdots 5 \cdots \bm{2} -8.$$
Note that every vertex in bold is incident with exactly one edge $\cdots$.
Also, note that the edges $0-7$, $7-1$ and $2-8$ do not change length, since their absolute differences are greater 
than $\left\lfloor \frac{9}{2}\right\rfloor$.
\end{exa}

Theorem~\ref{th:grow} and its immediate consequence Theorem~\ref{th:multigrow} are the core results for using growable realizations.

\begin{thm}\label{th:grow}
Suppose a multiset~$L$ has an $X$-growable realization. Then for each~$x \in X$, the multiset $L \cup \{ x^x \}$ has an  $X$-growable realization.
\end{thm}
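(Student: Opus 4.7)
I plan to turn the $x$-growability of $\bm{h}$ at some position $m$ into an explicit construction of a realization of $L \cup \{x^x\}$ in $K_{v+x}$. First, apply the embedding $K_v \hookrightarrow K_{v+x}$ at $m$ defined just above the definition; the image of $\bm{h}$ is a Hamiltonian path on $v$ of the $v+x$ vertices of $K_{v+x}$, missing the block $\{m+1, \ldots, m+x\}$. By the $x$-growability hypothesis, exactly $x$ edges of this embedded path have their lengths strictly increased, and each vertex $y_0 \in \{m-x+1, \ldots, m\}$ is the low endpoint of exactly one of them. For each flagged edge, insert the vertex $y_0+x$ into it. Since $y_0 \mapsto y_0+x$ is a bijection from $\{m-x+1, \ldots, m\}$ onto $\{m+1, \ldots, m+x\}$, this uses each missing vertex exactly once, producing a Hamiltonian path $\bm{h}'$ of $K_{v+x}$.

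Next, I would compute the edge-length multiset of $\bm{h}'$. Any unflagged edge of $\bm{h}$ has both endpoints on one side of $m$; because the growability condition rules out wrap-around edges in $\bm{h}$---an edge of difference exceeding $v/2$ would have its length increased by the embedding even without crossing $m$---such an edge's length is preserved. A flagged edge from $y_0$ to some $y_0^* > m$ with difference $d = y_0^* - y_0 < v/2$ is replaced by $y_0 \to y_0+x \to y_0^* + x$; the first segment has difference $x$ and length $x$, while the second has difference $d$ and length $d$ (since $d < v/2 < (v+x)/2$). Summing, $\bm{h}'$ realizes $L \cup \{x^x\}$.

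The final step is verifying that $\bm{h}'$ is $X$-growable. For each $x' \in X$, let $m_{x'}$ be a position witnessing the $x'$-growability of $\bm{h}$, and set $m'_{x'} = m_{x'}$ if $m_{x'} \leq m$ and $m'_{x'} = m_{x'} + x$ otherwise. The key observation is that the two shifts---the one at $m$ with parameter $x$ used to build $\bm{h}'$ from $\bm{h}$, and the one at $m'_{x'}$ with parameter $x'$ used to test growability of $\bm{h}'$---commute on the original vertex labels: each vertex $y$ of $\bm{h}$ is finally mapped to $y$ plus $x$ (if $y > m$) plus $x'$ (if $y > m_{x'}$). Consequently, a surviving old edge has its length increased by the test embedding at $m'_{x'}$ in $\bm{h}'$ exactly when the corresponding original edge was increased by the test embedding at $m_{x'}$ in $\bm{h}$. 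For an inserted short edge $y_0 \to y_0+x$, the test embedding increases its length exactly when $y_0 \leq m_{x'}$, producing a length-$(x+x')$ edge; otherwise both endpoints lie above $m'_{x'}$ and nothing changes. The bookkeeping then shows that the $x'$ vertices $\{m'_{x'}-x'+1, \ldots, m'_{x'}\}$ of $\bm{h}'$ correspond bijectively to $\{m_{x'}-x'+1, \ldots, m_{x'}\}$ of $\bm{h}$, and each is incident with exactly one increased-length edge in $\bm{h}'$.

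The main obstacle is this last step, particularly the case where an original edge is simultaneously $x$-growth at $m$ and $x'$-growth at $m_{x'}$ and therefore gets split by the $x$-construction. One must verify that the low half $y_0 \to y_0+x$ inherits the role of the original edge as the unique increased edge incident with $y_0$, and that the high half does not produce a spurious increase elsewhere. This requires carefully tracking, for each vertex of $\bm{h}'$, its position relative to $m'_{x'}$, but the calculations themselves are routine once the commutativity observation above is in hand.
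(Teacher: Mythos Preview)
Your plan matches the paper's proof: embed, insert $y_0+x$ for each $y_0\in\{m-x+1,\ldots,m\}$ into its unique flagged edge, check lengths, and transport each $x'$-growability witness by the rule $m_{x'}\mapsto m_{x'}$ (if $m_{x'}\le m$) or $m_{x'}+x$ (otherwise).  The paper asserts that last step in one line; your commutativity observation is a reasonable way to organise the verification.

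There is one incorrect claim, though.  Growability does \emph{not} rule out wrap-around edges.  A wrap-around edge that \emph{crosses}~$m$ has its length \emph{unchanged} by the embedding---the paper's Example~\ref{ex:picture} notes this explicitly for the edges $0\text{--}7$, $7\text{--}1$, $2\text{--}8$---so your statement ``every unflagged edge has both endpoints on one side of~$m$'' is false (harmlessly, since unflagged means length-preserved by definition).  More to the point, a wrap-around edge with \emph{both} endpoints $\le m$ \emph{is} flagged, and here the endpoint lying in $\{m-x+1,\ldots,m\}$ is necessarily the \emph{larger} one, not the low one: if $z<y_0\le m$ with $y_0-z>v/2$, then $z<y_0-v/2\le m-x$.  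Your length computation for flagged edges only treats the case $y_0^*>m$ with $y_0^*-y_0<v/2$ and so misses this.  The construction itself (insert $y_0+x$) is still right, and the missing check is easy: the inserted triple $(y_0,\,y_0+x,\,z)$ has differences $x$ and $y_0+x-z>(v+x)/2$, hence lengths $x$ and $(v+x)-(y_0+x-z)=v-(y_0-z)$, the latter being the original edge length.  (The paper's own proof writes the flagged neighbour's image as $z+x$ throughout and thus glosses over the same case.)
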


\begin{proof}
Let ${\bm g} = [g_1, \ldots, g_v]$ be an $X$-growable realization of a multiset~$L$.  Take $x \in X$ and $m$ such that ${\bm g}$ is $x$-growable at $m$.  Each element $y$ with $m-x < y \leq m$ is adjacent to exactly one element~$z$ such that the edge between them is lengthened by the embedding of $K_v$ into $K_{v+x}$.   

Applying the embedding we obtain a sequence ${\bm h'} =  [h_1, \ldots, h_v]$ in $K_{v+x}$. 
Each adjacent pair $y,z$ in ${\bm g}$ as above becomes a subsequence $(y, z+x)$ or $(z+x,y)$ in~${\bm h'}$.  Obtain a new sequence~${\bm h}$ in $K_{v+x}$ by replacing each subsequence $(y,z+x)$  with $(y, y+x, z+x)$ and each  subsequence $(z+x,y)$  with $(z+x, y+x, y)$.  As there is one pair for each $y$ in the range $m-x < y \leq m$, this adds the elements $m+1, \ldots, m+x$ to the sequence and hence ${\bm h}$ is a Hamiltonian path in $K_{v+x}$.

Now, ${\bm h}$ has the desired lengths because each pair of adjacent elements in ${\bm g}$ whose length was fixed by the embedding are still adjacent in ${\bm h}$ and each adjacent pair $y,z$ whose length was not fixed is replaced by a triple whose lengths are the original length and $x$.  There are~$x$ such pairs.

If ${\bm g}$ is $x'$-growable at $m'$, then ${\bm h}$ is $x'$-growable at $m'$ if $m' \leq m$ and $x'$-growable at $m' + x$ if $m' > m$.
\end{proof}

\begin{exa}\label{ex:pic_ctd}
Applying the embedding  $K_9 \hookrightarrow K_{12}$ to the $3$-growable realization of Example~\ref{ex:picture} we obtain the sequence 
$$9- 7 - 6 \cdots 0 - 10 - 1 \cdots 8 \cdots 2 - 11.$$
Now, following the proof of Theorem~\ref{th:grow}, we insert the vertices $3,4,5$, replacing the edges 
$6 \cdots 0 $, $1 \cdots 8$ and $8 \cdots 2 $ with
$6 -3 \cdots 0 $, $1 \cdots 4-8$ and $8-5 \cdots 2 $, respectively.
In this way, the sequence
$$ 9 - 7 - 6 - 3 \cdots 0 - 10 - 1\cdots 4 -8 - 5 \cdots 2 -11$$
is a cyclic realization of  $\{1,2^2,3^7,4\}$, which is still $3$-growable at $2$.
\end{exa}

If a realization is $x$-growable for each $x \in X$ for some set~$X$, then say that it is {\em $X$-growable}.  

\begin{thm}\label{th:multigrow}
Suppose a multiset~$L$ has a realization that is $\{ x_1, \ldots, x_{k} \}$-growable.  Then the multiset  $L \cup \{ x_1^{x_1\ell_1 } , x_2^{x_2\ell_2 }, \ldots,   x_k^{x_k \ell_k }  \}$ has a $\{ x_1, \ldots, x_{k} \}$-growable realization for any $\ell_1, \ell_2, \ldots, \ell_k \geq 0$.
\end{thm}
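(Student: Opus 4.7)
The plan is to obtain Theorem~\ref{th:multigrow} as a direct iterated application of Theorem~\ref{th:grow}. The key observation that makes the iteration possible is already embedded in the proof of Theorem~\ref{th:grow}: starting from an $X$-growable realization of $L$ and adding $\{x^x\}$ for some $x \in X$ yields a realization of $L \cup \{x^x\}$ that is still $X$-growable, not merely $x$-growable. Indeed, the last paragraph of the proof of Theorem~\ref{th:grow} shows that for every $x' \in X$, if the input was $x'$-growable at some $m'$, then the output is $x'$-growable at either $m'$ or $m' + x$ (depending on whether $m' \leq m$ or $m' > m$). Thus $X$-growability is preserved under the augmentation operation.

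Given this, I would proceed by induction on $N = \ell_1 + \cdots + \ell_k$. The base case $N = 0$ is exactly the hypothesis. For the inductive step, with $N \geq 1$, pick any index $j$ with $\ell_j \geq 1$. Apply Theorem~\ref{th:grow} with $x = x_j$ to the given $X$-growable realization of $L$; this yields an $X$-growable realization of $L \cup \{x_j^{x_j}\}$. Now invoke the induction hypothesis with this new multiset as the starting realization and with multiplicities $(\ell_1,\ldots,\ell_{j-1},\ell_j - 1,\ell_{j+1},\ldots,\ell_k)$, whose total is $N - 1$. The resulting $X$-growable realization is of
\[
\bigl(L \cup \{x_j^{x_j}\}\bigr) \cup \{x_1^{x_1\ell_1},\ldots,x_j^{x_j(\ell_j-1)},\ldots,x_k^{x_k\ell_k}\} = L \cup \{x_1^{x_1\ell_1},\ldots,x_k^{x_k\ell_k}\},
\]
as required.

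Equivalently, one can simply iterate the construction in the order $j = 1, 2, \ldots, k$, applying Theorem~\ref{th:grow} $\ell_j$ times with $x = x_j$ before moving on to $x_{j+1}$; each intermediate output is still $X$-growable, so every subsequent step is justified. There is no genuine obstacle: the substance lies entirely in Theorem~\ref{th:grow}, and Theorem~\ref{th:multigrow} serves mainly as a convenient packaging for the repeated use of growability in the later sections of the paper.
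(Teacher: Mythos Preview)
Your proposal is correct and matches the paper's proof exactly: the paper's entire argument is the single sentence ``Repeatedly apply Theorem~\ref{th:grow}.'' Your explicit induction on $\ell_1+\cdots+\ell_k$ and the observation that $X$-growability (not just $x$-growability) is preserved are precisely the content the paper is tacitly invoking.
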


\begin{proof}
Repeatedly apply Theorem~\ref{th:grow}.
\end{proof}

\begin{exa}\label{ex:1234} 
The sequence
$$ [0, 3, 6, 2, 1, 13, 10, 11, 14, 12, 9, 8, 5, 4, 7 ]$$
is a cyclic realization of $L = \{ 1^4, 2, 3^8, 4 \}$.  It is $1$-growable at $8$ and $9$; it is $2$-growable at $3$; it is $3$-growable at $11$; and it is $4$-growable at~$5$.

If we apply Theorem~\ref{th:grow} four times with $x=2$ and then three times with $x=3$ 
we get the sequence
\begin{flushleft}
$ [ 0, 3, 5, 7, 9, 11, 14, 10, 8, 6, 4, 2, 1, 30, 27, 24, 21, 18, 19, $
\end{flushleft}
\begin{flushright}
$ 22, 25, 28, 31, 29, 26, 23, 20, 17, 16, 
  13, 12, 15 ] $,
\end{flushright}
which is a $\{1,2,3,4\}$-growable realization of $\{ 1^4, 2, 3^8, 4 \} \cup \{2^8, 3^9\} =   \{ 1^4, 2^9, 3^{17}, 4 \}$. 
\end{exa}

Any standard linear realization (and hence any perfect realization) is 1-growable at~0.  

Suppose we are investigating multisets that have underlying set~$U = \{x_1, \ldots, x_k \}$.  Using Theorem~\ref{th:multigrow}, a $U$-growable realization for a multiset $L = \{x_1^{a_1}, \ldots, x_k^{a_k} \}$ is sufficient to cover all multisets $M = \{x_1^{b_1}, \ldots, x_k^{b_k} \}$ with $b_i \geq a_i$ for each $i$ and $$(b_1, \ldots, b_k) \equiv (a_1, \ldots, a_k) \pmod{ (x_1, \ldots, x_k)}.$$
This means that the task frequently breaks naturally into considering $\prod_{i=1}^k x_i$ cases according to congruence modulo  $(x_1, \ldots, x_k)$.

We conclude this section with two lemmas that allow the expansion of the range of values for which realizations are growable.

\begin{lem}\label{lem:1grow}
Suppose $L$ has an $X$-growable realization with $1 \in X$ and $K$ has a $Y$-growable perfect linear realization.  Then $L \cup K$ has a $(X \cup Y)$-growable realization. 
\end{lem}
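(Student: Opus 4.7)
The plan is to combine the two realizations by embedding and insertion. Let ${\bm g} = [g_1, \ldots, g_v]$ be the $X$-growable realization of $L$, 1-growable at some position $m$ (such an $m$ exists as $1 \in X$), and let ${\bm k} = [0, k_2, \ldots, k_{w-1}, w-1]$ be the $Y$-growable perfect linear realization of $K$ in $K_w$. First, embed $K_v \hookrightarrow K_{v+w-1}$ at threshold $m$ via the map in Section~\ref{sec:grow} with $x = w - 1$. By 1-growability at $m$, the unique edge of ${\bm g}$ whose length is increased under $K_v \hookrightarrow K_{v+1}$ has endpoints $m$ and some $z > m$, and after embedding into $K_{v+w-1}$ it becomes the edge from $m$ to $z + w - 1$. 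Next, insert the shifted linear realization: replace the adjacent pair $[m, z+w-1]$ in the embedded sequence with $[m,\, m+k_2,\, \ldots,\, m+k_{w-1},\, m+w-1,\, z+w-1]$, yielding the sequence ${\bm h}$.

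\smallskip

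The sequence ${\bm h}$ is a Hamiltonian path of $K_{v+w-1}$, since the inserted subpath covers exactly the missing vertices $\{m+1, \ldots, m+w-1\}$. For the edge-length multiset, 1-growability of ${\bm g}$ at $m$ constrains every edge of ${\bm g}$ other than the one from $m$ to $z$ to have its length preserved under the embedding into $K_{v+w-1}$: non-crossing edges must have direct distance at most $v/2$ (otherwise their wrap-around length would increase under $K_v \hookrightarrow K_{v+1}$, violating 1-growability), and any other crossing edge at $m$ must already use wrap-around, which is invariant under the embedding at threshold $m$. The removed edge from $m$ to $z$ (of length $z-m$) is replaced by the edge from $m+w-1$ to $z+w-1$ (also of length $z-m$, since $z-m \leq (v-1)/2 < (v+w-1)/2$), together with $w-1$ edges whose lengths form the multiset $K$. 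Hence ${\bm h}$ realizes $L \cup K$.

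\smallskip

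For $(X \cup Y)$-growability: for each $x \in X$ with ${\bm g}$ being $x$-growable at some $m_x$, the realization ${\bm h}$ will be $x$-growable at $m_x$ (if $m_x \leq m$) or at $m_x + w - 1$ (if $m_x > m$). In either case the new threshold lies outside the range $\{m+1, \ldots, m+w-1\}$, so the inserted subpath sits entirely on one side of the threshold, and the substitution does not alter the $x$-lengthening behavior at the chosen position. For each $y \in Y$ with ${\bm k}$ being $y$-growable at some $m_y$, the realization ${\bm h}$ will be $y$-growable at $m + m_y$: the inserted subpath is a shifted copy of ${\bm k}$, so internal window-vertices inherit $y$-growability directly, and the two boundary edges adjacent to the subpath (from $m$ to its predecessor in ${\bm g}$, and from $m+w-1$ to $z+w-1$) do not contribute spurious lengthened edges, thanks to the constraints 1-growability places on them. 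This last boundary-edge verification is the main technical obstacle, requiring a case analysis based on whether $m$'s predecessor in ${\bm g}$ sits above or below $m$ and whether the subpath endpoints fall in the window of $y$-growability; in each case the 1-growable structure around $m$ forces the boundary edges to be non-crossing or to carry wrap-around lengths that are unaffected by the relevant embedding.
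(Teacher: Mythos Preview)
Your construction is the same as the paper's, but the paper reaches it in two lines by invoking Theorem~\ref{th:grow} rather than unrolling the embedding by hand. The paper applies Theorem~\ref{th:grow} exactly $k=|K|$ times with $x=1$; this immediately yields an $X$-growable realization of $L\cup\{1^k\}$ containing the consecutive block $[m,m+1,\ldots,m+k]$, and then that block is replaced by the translate ${\bm g}+m$ of the perfect linear realization. All of your edge-length and $X$-growability bookkeeping (non-crossing versus wrap-around edges, what happens to the predecessor of $m$, etc.) is already packaged inside Theorem~\ref{th:grow}, so there is no need to redo it for the one-shot embedding $K_v\hookrightarrow K_{v+w-1}$.

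Your direct route is not wrong, and your analysis of why 1-growability at $m$ forces every other edge to survive the large embedding is essentially correct. But it buys nothing over the modular argument and costs you the delicate boundary discussion you flag at the end. On the $Y$-growability of the result, note that the paper is equally terse (it simply asserts it); your honest acknowledgment that a case analysis on the boundary edges is needed, without actually performing it, leaves you in the same position as the paper rather than ahead of it. If you want to tighten this, the cleanest route is again the paper's: after the block substitution the only edges touching $\{m,\ldots,m+k\}$ that are not internal to ${\bm g}+m$ are the two attachment edges at $m$ and $m+k$, and the substitution leaves those two edges literally unchanged from the $X$-growable realization of $L\cup\{1^k\}$, so the $y$-growability of ${\bm g}$ at $m_y$ transfers to $y$-growability at $m+m_y$ with minimal extra checking.
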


\begin{proof}
Suppose $|K| = k$ and let ${\bm g} = [ g_1, \ldots, g_{k+1} ]$ be a  $Y$-growable perfect linear realization of~$K$.   
  
Apply Theorem~\ref{th:grow} $k$ times with $x = 1$ to the $X$-growable realization of~$L$ to obtain an $X$-growable realization of $L \cup \{1^k \}$ with subsequence $m, m+1, \ldots, m+k$.  Replace this subsequence with ${\bm g} + m$ to obtain the desired $(X \cup Y)$-growable realization of $L \cup K$.
\end{proof}

It is possible to take~$Y$ to be the empty set in Lemma~\ref{lem:1grow}  to construct an $X$-growable realization for~$L \cup K$.

\begin{lem}\label{lem:evengrow}
Suppose $L$ has an $X$-growable realization with $2 \in X$.  Let $y$ and $z$ be even (possibly with $y=z$).
Then $L \cup \{ 1^{y+z-4} , y^{y+1}, z^{z+1} \}$ has an $(X \cup \{ y,z \} )$-growable realization. 
\end{lem}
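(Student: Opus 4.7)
The plan is to exploit the 2-growability of the given realization to create two insertion slots in an embedded copy, then splice in two explicit sub-paths that collectively realize the desired additional multiset $\{1^{y+z-4},y^{y+1},z^{z+1}\}$ and supply the required $y$- and $z$-growability.

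Let $\bm{g}$ be the given $X$-growable realization of $L$, 2-growable at some $m$, with grown edges $(m-1,a)$ and $(m,b)$ of lengths $\ell_1$ and $\ell_2$. Because both edges must actually be lengthened by the embedding, 2-growability forces the ``front'' regime $\ell_1 = a-m+1$ and $\ell_2 = b-m$, both strictly less than $v/2$. Set $N = 2y+2z-2$ and embed $\bm{g}$ into $K_{v+N}$; the two grown edges become $(m-1,a+N)$ and $(m,b+N)$ while every other edge retains its original length. The $N$ new vertices $\{m+1,\ldots,m+N\}$ then need to be inserted as two Hamiltonian sub-paths $P_y$ (from $m-1$ to $a+N$) and $P_z$ (from $m$ to $b+N$) replacing the two grown edges. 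I would partition the new vertices as $S_1 \cup S_2$ with $|S_1| = 2y-1$ and $|S_2| = 2z-1$, placing the ``anchor'' $m+N-1$ in $S_1$ as the interior vertex adjacent to $a+N$ in $P_y$: a direct calculation shows $\ell(m+N-1,a+N) = a-m+1 = \ell_1$ in $K_{v+N}$, so the terminal edge of $P_y$ exactly reproduces the removed length. Analogously the anchor $m+N \in S_2$ gives a terminal edge of $P_z$ of length $\ell_2$.

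The remaining interiors of $P_y$ and $P_z$ would be constructed to realize $\{1^{y-2},y^{y+1}\}$ and $\{1^{z-2},z^{z+1}\}$ respectively, using a weaving pattern that alternates length-$y$ (respectively length-$z$) jumps with length-$1$ corrections in a suitable block of new vertices; the pattern would be arranged so that $P_y$ is $y$-growable at some interior point of $S_1$ and $P_z$ is $z$-growable at some interior point of $S_2$. Splicing both sub-paths into the embedded realization then produces a Hamiltonian path in $K_{v+N}$ whose edge-length multiset is exactly $L \cup \{1^{y+z-4},y^{y+1},z^{z+1}\}$. The $x$-growabilities of $\bm{g}$ for $x \in X \setminus \{2\}$ carry over (with positions shifted by $N$ when they lie above $m$) because the edit is local; combined with the growabilities inside $P_y$ and $P_z$, this delivers the required $(X \cup \{y,z\})$-growability.

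The main obstacle will be the explicit combinatorial design of the weaving sub-paths. The choice of which new vertices to assign to $S_1$ is constrained: $\{m-1\} \cup S_1$ must contain at least $y+1$ pairs at distance $y$ to accommodate all the required length-$y$ edges, and the simple consecutive choice $\{m+1,\ldots,m+2y-2\} \cup \{m+N-1\}$ does not supply enough such pairs for arbitrary $y,z$. A more intricate mixed partition (interleaving ``low'' and ``high'' new vertices) will likely be necessary, and the weaving pattern must be verified to give the correct $y$-growability in each case. The argument will probably involve a case analysis on $y$ and $z$ modulo small integers and a separate treatment of the edge cases $y=2$ or $z=2$, where the weaving degenerates and reduces to ordinary 2-growing as in Theorem~\ref{th:grow}.
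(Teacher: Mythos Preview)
There is a genuine gap.  Your plan requires the sub-path $P_y$ (with $|S_1|=2y-1$ interior vertices) to start at $m-1$, end at the anchor $m+N-1$, and use only edges of length $1$ or $y$ on its non-terminal steps.  But the anchor is forced: the only new vertex at distance $\ell_1$ from $a+N$ is $m+N-1$, so you cannot relocate it.  The displacement from $m-1$ to $m+N-1$ is $N=2y+2z-2$, while your $2y-1$ non-terminal edges each have length at most $y$, giving maximum reach $(2y-1)y$.  For $z$ large relative to $y$ (e.g.\ $y=4$, $z>11$) this is strictly smaller than $N$, so no such $P_y$ exists.  The symmetric obstruction hits $P_z$ when $y\gg z$.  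Hence the proposed separation of all $y$-edges into $P_y$ and all $z$-edges into $P_z$ is not just awkward but impossible in general, and the ``more intricate mixed partition'' you anticipate cannot rescue it without allowing $z$-edges inside $P_y$.

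The paper's proof avoids this by two different choices.  First, rather than a single shift-by-$N$ embedding, it applies Theorem~\ref{th:grow} with $x=2$ a total of $y+z-1$ times; this produces two explicit arithmetic-progression subsequences $[m,m+2,\ldots,m+2y+2z-2]$ and $[m-1,m+1,\ldots,m+2y+2z-3]$ whose endpoints are known and independent of the original edges $(m-1,a)$, $(m,b)$.  Second, and crucially, the two replacement sequences are \emph{not} segregated by edge-length: one realizes $\{1^{y-2},y^{y-1},z^2\}$ and the other $\{1^{z-2},y^2,z^{z-1}\}$, with the two $z$-edges at the tail of the first sequence being exactly what carries it out to its far endpoint $2y+2z-1$.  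The explicit description of these two sequences is the substantive content of the proof, and it is short; no case analysis on $y,z$ modulo anything is needed.
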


\begin{proof}
Apply Theorem~\ref{th:grow} $y+z-1$ times with $x=2$ to the $X$-growable realization of~$L$ to obtain an $X$-growable realization of $L \cup \{2^{2(y+z-1)} \}$ with the following two subsequences: 
$$[m, m+2, \ldots, m+ 2y+2z -2] , [m-1, m+1, \ldots, m+ 2y+2z -3].$$ 
The sequence
$${\bm g} = [ 1, y+1, y+2, 2, 3, y+3, \ldots, y-1, 2y-1, 2y + z -1, 2y+2z -1]$$
uses the elements 
$$\{1, 2, \ldots, y-1, y+1, y+2, \ldots, 2y-1, 2y + z -1, 2y+2z -1\}$$ 
and has edge-lengths $\{ 1^{y-2}, y^{y-1}, z^2 \}$. 
The sequence
$${\bm h} = [ 0, y, 2y, 2y+z, 2y+z+1, 2y+1, 2y+2, 2y+z+2, \ldots, 2y+2z-2  ]$$
uses the elements 
$$\{0, y, 2y, 2y+1, \ldots, 2y + z -2, 2y+z, 2y+z+1, \ldots,  2y+2z -2\},$$ 
and has edge-lengths $\{ 1^{z-2}, y^2, z^{z-1} \}$.  The elements used by~${\bm g}$ and~${\bm h}$ together are exactly those used in  the two subsequences from the realization of $L \cup \{2^{2(y+z-1)} \}$.
Replace the two subsequences  with ${\bm g} + m-1$ and ${\bm h} + m-1$ respectively to obtain a realization of  $L \cup \{ 1^{y+z-4} , y^{y+1}, z^{z+1} \}$.  

It is $y$-growable at $m+y-1$ because each $t$ in the range $m-1 <  t < m+y-1$ is adjacent to $t + y > m+y-1$ and to $t \pm 1 \leq m+y-1$, and $m+y-1$ is adjacent to $m-1$ and $m+2y-1$.  It is  $z$-growable at $m +2y + z - 2$ because each $t$ in the range $m + 2y - 2 <  t < m+2y + z-2$ is adjacent to $t + z > m +2y + z - 2$ and to $t \pm 1 \leq m +2y + z - 2$, and $m +2y + z - 2$ is adjacent to $m +2y - 2$ and $m +2y + 2z - 2$.
\end{proof}

\section{Complete Solutions for $U=\{1,2,3\}$ and $U = \{1,4,5 \}$}\label{sec:145}

Given any fixed set~$U$, we may use growable realizations to try to prove $\BHR(L)$ for all but finitely many multisets~$L$ with underlying set~$U$.  To do this, divide the problem into~$\prod_{x \in U} x$ cases, corresponding to the possible congruence classes of the number of occurrences of each element~$x \pmod{x}$.  For each case, a finite number---possibly one---of growable realizations can show that all but finitely many---possibly zero---admissible~$L$ matching these congruence classes has a realization.  The finitely many exceptions can then be dealt with directly.   In this section we illustrate this process for~$U = \{ 1,2,3 \}$ and $U = \{1,4,5\}$.

When $U = \{ 1,2,3 \}$, the BHR Conjecture is already known to hold~\cite{CD10}.   However, the self-contained proof given here in Theorem~\ref{th:123} is significantly shorter, which gives an indication of the power of the method of growable realizations compared to existing tools.

When $U = \{ 1,4,5 \}$, from previous work we know that $\{ 1^a, 4^b, 5^c \}$ is realizable when $a\geq 11$ or when both $a \geq 7$ and $b \geq c$~\cite{OPPS}.  However, the proof of Theorem~\ref{th:145} does not rely on this result.  

\begin{thm}\label{th:123}
Let  $L = \{1^a,2^b,3^c\}$ be an admissible multiset with $a,b,c \geq 1$. Then $\BHR(L)$ holds.
\end{thm}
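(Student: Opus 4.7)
The plan is to partition the admissible multisets $L=\{1^a,2^b,3^c\}$ into the six residue classes indexed by $(b\bmod 2,\,c\bmod 3)$, since Theorem~\ref{th:multigrow} grows the $2$-exponent in steps of $2$ and the $3$-exponent in steps of $3$. For each class $(i,j)$ I would use the heuristic GAP search described in Section~\ref{sec:intro} to find a $\{1,2,3\}$-growable base realization $\boldsymbol{g}_{i,j}$ of a multiset $L_{i,j}=\{1^{a_{i,j}},2^{b_{i,j}},3^{c_{i,j}}\}$ with $b_{i,j}\equiv i\pmod 2$, $c_{i,j}\equiv j\pmod 3$, and each of $a_{i,j}$, $b_{i,j}$, $c_{i,j}$ at its class minimum---ideally $a_{i,j}=1$, $b_{i,j}\in\{1,2\}$, and $c_{i,j}\in\{1,2,3\}$. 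Theorem~\ref{th:multigrow} then immediately realizes every $L$ in class $(i,j)$ with $(a,b,c)\geq(a_{i,j},b_{i,j},c_{i,j})$ coordinate-wise.

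With $v=a+b+c+1$, the admissibility condition for $U=\{1,2,3\}$ reduces to $b\leq a+c-1$ when $v$ is even and $c\leq a+b-2$ when $3\mid v$. Neither of these alone uniformly bounds $a$, $b$, or $c$ individually, so if any base coordinate exceeded its class minimum there would in general be infinitely many admissible $L$ in that class not dominated by the base. Consequently, the whole argument hinges on exhibiting a genuinely minimal $\{1,2,3\}$-growable base for each of the six classes; once this succeeds, no further case analysis is required.

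The main obstacle is exactly this existence question. Simultaneous $1$-, $2$-, and $3$-growability imposes, for each $x\in\{1,2,3\}$, the existence of a position $m_x$ such that each of the $x$ consecutive vertices $m_x-x+1,\ldots,m_x$ is incident to exactly one edge whose length is increased by the embedding (and no other edge is). For the very small $v$ one hopes to use (roughly $5 \leq v \leq 9$), these three structural constraints can conflict, and the search may fail to find a fully minimal base in some class. If so, I would compensate either by settling for a non-minimal base and catching the finitely many residual admissible multisets by ad hoc realizations, or by applying Lemma~\ref{lem:1grow} to splice a suitable perfect linear realization into an $X$-growable realization with $1\in X$ to broaden the reach of the base. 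Any admissible multisets still uncaught after these manoeuvres would be verified directly, appealing to Theorem~\ref{th:known}.12 for the small-$v$ range.
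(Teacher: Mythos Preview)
Your overall strategy---partition by $(b\bmod 2,c\bmod 3)$, find $\{1,2,3\}$-growable bases, and apply Theorem~\ref{th:multigrow}---is exactly the paper's approach. But the proposal contains a genuine gap in the fallback reasoning, and the optimism about minimal bases is misplaced.

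First, fully minimal bases cannot exist in all six classes. For instance, in class $(b,c)\equiv(0,1)\pmod{(2,3)}$ the target $(a,b,c)=(1,2,1)$ gives $v=5$, where edge-length~$3$ is not even defined; any class with $b_{i,j}+c_{i,j}\leq 3$ forces $v<7$ and runs into this obstruction. The paper's actual bases (Table~\ref{T123-1}) have $a=1$ and $b\in\{1,2\}$ as you hope, but $c$ ranges from $3$ to~$7$, so the $c$-coordinate is \emph{not} minimal.

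Second, and this is the real gap: you correctly observe that a non-minimal base leaves \emph{infinitely many} undominated admissible multisets, but then your fallback proposes to catch ``the finitely many residual admissible multisets by ad hoc realizations.'' These two statements contradict each other. With the paper's bases, the uncovered cases are all $(a,b,c)$ with $c\leq 4$ (say), and since $a,b$ are unbounded these are infinite families, not finitely many sporadic cases. Neither Lemma~\ref{lem:1grow} nor Theorem~\ref{th:known}.12 resolves this. The paper's actual fix is a \emph{cascade of weaker growability}: for the small-$c$ cases it supplies $\{1,2\}$-growable (not $\{1,2,3\}$-growable) realizations, which still cover all $a\geq1$ and all $b$ of the right parity; what then remains has both $b$ and $c$ small, and is handled by $1$-growable realizations. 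Only after this two-step descent does one reach a genuinely finite residue. Your plan is missing this layered structure.

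A minor point: your admissibility computation is off. For $U=\{1,2,3\}$ with $a,b,c\geq1$ the divisor conditions are automatically satisfied (e.g.\ for $d=2$ one gets $b\leq v-2=a+b+c-1$, i.e.\ $a+c\geq1$); the only real constraint is $3\leq\lfloor v/2\rfloor$, i.e.\ $a+b+c\geq5$.
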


\begin{proof}
We start with the $\{1,2,3\}$-growable cyclic realizations
of $\{1, 2^b, 3^c\}$  described in the first part of Table \ref{T123-1}, which
allow to cover all the $6$ possibilities of the congruence class combinations of $(b,c) \pmod{(2,3)}$.
Using Theorem~\ref{th:multigrow} this proves $\BHR(L)$ for all $a,b\geq 1$ and $c\geq 5$.
To complete the case $b+c\geq 4$,  we use the $\{1,2\}$-growable realizations for
$(b,c) \in \{(1,4),(2,2), (3,1),(3,2), (3,3),(4,1)\}$ from the second part of Table~\ref{T123-1} and
the $1$-growable realization of $\{1,2,3^3\}$, described in Table \ref{T123-2}.

Now, the cases when $b+c< 4$ can be solved using the $1$-growable realizations of
$\{1^a,2^b,3^c\}$, described in Table \ref{T123-2}.
\end{proof}

\begin{table}[ht]
\caption{$\{1,2,3\}$-growable cyclic realizations for $\{1,2^b,3^c\}$:
they are $x$-growable at $m_x$. The congruence classes of $(b,c)$ are taken modulo $(2,3)$.}\label{T123-1}
\begin{center}
\begin{footnotesize}
$$\begin{array}{llll}\hline
\text{Classes} & \text{Realizations} & (b,c) & (m_1,m_2, m_3) \\ \hline
(0,0) &  [ 2, 4, 1, 5, 3, 0, 6 ] & ( 2, 3) &  ( 5, 1, 3 ) \\
(0,1) &  [ 3, 6, 0, 5, 2, 1, 7, 4 ] & ( 2, 4) &  ( 2,3 , 4) \\
(0,2) &  [ 6, 5, 2, 8, 1, 4, 7, 0, 3 ] & ( 2, 5) & ( 7, 5, 2) \\
(1,0) &  [ 8, 5, 2, 3, 6, 0, 7, 1, 4 ] & (1, 6) & ( 1, 6, 3) \\
(1,1) &  [ 5, 8, 1, 4, 6, 9, 2, 3, 0, 7 ] & ( 1, 7) &  ( 4,  6, 2 ) \\
(1,2) &  [ 6, 1, 4, 7, 5, 0, 3, 2 ] & ( 1, 5) & ( 4, 1, 2) \\ \hline
(0,1) &  [ 0, 2, 4, 1, 6, 5, 3 ] & ( 4, 1) & ( 5,2, 3) \\
(0,2) &  [ 3, 1, 4, 5, 2, 0 ] & ( 2, 2) & ( 4,1,-) \\
(1,0) &  [ 7, 4, 2, 0, 3, 1, 6, 5 ] & ( 3, 3) & ( 4, 5,2) \\
(1,1) &  [ 2, 5, 1, 3, 6, 0, 4 ] & ( 1, 4) & ( 1, 3,-) \\
      &  [ 4, 2, 5, 3, 1, 0 ] & ( 3, 1) & ( 1,3, -) \\
(1,2) & [ 2, 4, 6, 5, 1, 3, 0 ] & ( 3, 2) & ( 4, 1,2)\\ \hline
\end{array}$$
\end{footnotesize}
\end{center}
\end{table}

\begin{table}[ht]
\caption{$1$-growable cyclic realizations for $\{1^a,2^b,3^c\}$:
they are $1$-growable at $m_1$.}\label{T123-2}
\begin{center}
\begin{footnotesize}
$$\begin{array}{lll|lll}\hline
(a,b,c) & \text{Realizations}  & m_1  & (a,b,c) & \text{Realizations}  & m_1  \\ \hline
(1,1,3) &  [ 2, 5, 4, 1, 3, 0 ] & 4 &
(2,1,2) &  [ 0, 3, 5, 4, 1, 2 ] & 3 \\
(2,2,1) &  [ 3, 1, 0, 5, 2, 4 ] & 1 &
(3,1,1) &  [ 0, 5, 4, 1, 3, 2 ] & 4 \\
\hline
\end{array}$$
\end{footnotesize}
\end{center}
\end{table}

We now move on to $U = \{ 1,4,5 \}$.

\begin{lem}\label{145-a2}
Let $L=\{1^a,4^b,5^c\}$ be an admissible multiset with $a\geq 2$.
Then $\BHR(L)$ holds.
\end{lem}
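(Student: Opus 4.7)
The plan is to follow the pattern of Theorem~\ref{th:123}, aiming to produce a $\{1,4,5\}$-growable cyclic realization for one multiset in each of the $\prod_{x\in\{1,4,5\}}x = 20$ congruence classes of $(b,c)\pmod{(4,5)}$. By Theorem~\ref{th:multigrow} each such base realization proves $\BHR(L)$ for every admissible $L=\{1^a,4^b,5^c\}$ lying in its class, provided $a$, $b$, and $c$ are at least as large as the corresponding parameters of the base. Since $1$-growability is subsumed by $\{1,4,5\}$-growability, repeated application of Theorem~\ref{th:grow} with $x=1$ then disposes of arbitrary $a\geq 2$ starting from a seed with $a_0=2$.

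First I would use the GAP heuristic described in Section~\ref{sec:intro} to find twenty $\{1,4,5\}$-growable realizations, one per class $(b,c)\pmod{(4,5)}$, with $a_0=2$ and $b_0,c_0$ as small as possible; these would be tabulated in the style of Table~\ref{T123-1}, listing the three growth indices $m_1$, $m_4$, $m_5$. Next, for each residual pair $(b,c)$ not covered by the main table---namely those $(b,c)$ in a given class with $b<b_0$ or $c<c_0$---I would search for auxiliary realizations that are either $\{1,4\}$- or $\{1,5\}$-growable, so that a single template per pair covers all admissible triples with that residue. Finally, for the finitely many remaining sporadic triples $(a,b,c)$ with very small $b+c$, I would exhibit explicit $1$-growable cyclic realizations in the spirit of Table~\ref{T123-2}, at the smallest admissible value of $a$; Theorem~\ref{th:grow} with $x=1$ then extends each to all larger $a$.

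The main obstacle I expect is the first step. A single short realization must satisfy $1$-, $4$-, and $5$-growability simultaneously: four consecutive integers must each be incident with exactly one edge lengthened by the embedding $K_v\hookrightarrow K_{v+4}$, and independently five consecutive integers must admit the analogous property for $K_v\hookrightarrow K_{v+5}$. For several of the $20$ congruence classes this will likely force the base realization to be fairly large, pushing the heuristic search. A secondary difficulty is bookkeeping around admissibility, which must be checked whenever $4\mid v$ or $5\mid v$; near the boundaries $b=v-4$ or $c=v-5$ one has to verify that every tabulated entry respects the divisor condition and that the three tables together exhaust every admissible $(a,b,c)$ with $a\geq 2$.
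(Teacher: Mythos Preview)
Your proposal is correct and follows essentially the same approach as the paper: produce twenty $\{1,4,5\}$-growable base realizations with $a_0=2$ to cover all residue classes $(b,c)\pmod{(4,5)}$, fill in the residual small-$b$ or small-$c$ cases with auxiliary partially-growable realizations, and finish the remaining sporadic triples with $1$-growable seeds. The paper's execution differs only in minor bookkeeping details (it uses $\{1,5\}$- and $5$-growable rather than $\{1,4\}$-growable auxiliaries, and one sporadic seed has $a_0=3$), and it handles the admissibility exclusions exactly as you anticipate.
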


\begin{proof}
In view of Theorem~\ref{th:known}.1,  we may assume $b,c \geq 1$.
We start with the $\{1,4,5\}$-growable cyclic realizations of $\{1^2, 4^b, 5^c\}$  described in the first part of Table \ref{T145-1}
(note that in this case $b+c\geq 7$).
These realizations allow to cover all the $20$ possibilities of the congruence class combinations of $(b,c) \pmod{(4,5)}$.
Using Theorem~\ref{th:multigrow}, this proves $\BHR(L)$ for all $a\geq 2$, $b\geq 7$ and $c\geq 1$.
The case  $2\leq b\leq 6$ with $b+c\geq 8$ can be solved using
the $\{1,5\}$-growable cyclic realizations of $\{1^2,4^b,5^c\}$ provided by Table \ref{T145-1},
with the exception of $(b,c)\equiv (2,4)\pmod{(4,5)}$.
Furthermore, the same table gives $5$-growable cyclic realizations of $\{1^2, 4, 5^c\}$
for $c\geq 7$ with $c\not \equiv 1 \pmod 5$. Note that the multisets
$\{1^2, 4, 5^{5k+6}\}$ are not admissible.

To complete the case $b+c\geq 7$ we consider the $5$-growable cyclic realization
of $\{1^2,4^2,5^9\}$ and the $1$-growable cyclic realizations of $\{1^2, 4^b, 5^{7-b}\}$, $2\leq b\leq 6$,
given in Table \ref{T145-1}, as well as the $\{1,5\}$-growable cyclic realization
of $\{1^3,4^2,5^9\}$ given in Table \ref{T145-2}.

To conclude our proof,  we use the $1$-growable cyclic realizations of $\{1^a,4^b,5^c\}$ with $a+b+c=9$, described in Table
\ref{T145-2}.
\end{proof}

\begin{table}[ht]
\caption{$\{1,5\}$-growable cyclic realizations for $\{1^a,  4^b,5^c\}$, $a\geq 3$:
they are $x$-growable at $m_x$.}\label{T145-2}
\begin{center}
\begin{footnotesize}
$$\begin{array}{llll}\hline
(a,b,c) & \text{Realizations}  & (m_1, m_5) \\ \hline
(3,1,6 )& [ 6, 7, 2, 1, 5, 0, 10, 4, 9, 3, 8 ] & (9,4) \\
(3,2,9) & [ 9, 14, 0, 10, 5, 4, 8, 13, 3, 7, 12, 2, 1, 11, 6 ] & ( 3, 9) \\\hline
(3,1,5) &  [ 8, 3, 2, 7, 1, 6, 5, 0, 9, 4 ] & (2,-)   \\
(3,2,4) &  [ 7, 2, 6, 1, 5, 0, 9, 8, 3, 4 ] & (1,-)   \\
(3,3,3) &  [ 3, 2, 8, 4, 9, 0, 5, 1, 6, 7 ] & ( 8,-) \\
( 3,4,2) & [ 6, 2, 8, 7, 3, 4, 9, 0, 5, 1 ] & (7,-)    \\
(3,5,1) & [ 5, 6, 2, 8, 9, 4, 0, 1, 7, 3 ] & (7,-) \\
(4 ,1,4) & [ 2, 1, 6, 7, 3, 8, 9, 4, 5, 0 ] & (1,-)    \\
( 4,2,3) & [ 7, 8, 4, 9, 3, 2, 1, 6, 5, 0 ] & (4,-)  \\
(4,3,2) &  [ 9, 5, 0, 6, 1, 2, 3, 4, 8, 7 ] & (4,-) \\
(4,4,1) & [ 0, 9, 4, 3, 7, 8, 2, 6, 5, 1 ] & (8,-) \\
(5,1,3) & [ 8, 3, 2, 7, 6, 5, 1, 0, 9, 4 ] & (6,-)\\
(5,2,2) & [ 5, 4, 9, 0, 1, 2, 8, 3, 7, 6 ] & (8,-) \\
(5,3,1) & [ 4, 5, 9, 8, 3, 7, 6, 2, 1, 0 ] & (2,-) \\
(6,1,2) & [ 3, 4, 8, 9, 0, 5, 6, 7, 2, 1 ] & (8,-) \\
(6,2,1) & [ 8, 4, 3, 2, 1, 7, 6, 5, 0, 9 ] & (4,-) \\
(7,1,1) & [ 3, 2, 1, 0, 4, 9, 8, 7, 6, 5 ] & (8,-) \\ \hline
\end{array}$$
\end{footnotesize}
\end{center}
\end{table}

\begin{table}[ht]
\caption{$\{1,4,5\}$-growable cyclic realizations for $\{1^2,  4^b,5^c\}$:
they are $x$-growable at $m_x$. The congruence classes of
$(b,c)$ are taken modulo $(4,5)$.}\label{T145-1}
\begin{center}
\begin{footnotesize}
$$\begin{array}{llll}\hline
\text{Classes} & \text{Realizations} & (b,c) & (m_1, m_4, m_5) \\ \hline
(0,0) & [ 5, 9, 1, 6, 7, 2, 10, 3, 8, 4, 11, 0 ] & (4,5) & (9,4,5) \\
(0,1) & [ 5, 9, 1, 6, 2, 10, 11, 3, 7, 8, 4, 0 ] & (8,1) & (9,3,5) \\
(0,2)  & [ 5, 6, 1, 10, 9, 0, 4, 8, 12, 3, 7, 2, 11 ] & (8,2) & (8,3,5) \\
(0,3)    & [ 1, 11, 12, 2, 7, 3, 13, 4, 8, 9, 5, 0, 10, 6 ] & (8,3) & (10,5,6) \\
(0,4) & [ 1, 6, 7, 2, 9, 5, 0, 10, 3, 8, 4 ] & (4,4) & (9,3,4) \\
(1,0)  & [ 7, 8, 3, 11, 2, 10, 6, 1, 5, 9, 4, 0, 12 ] & (5,5) & (10,6,7) \\
(1,1) & [ 10, 1, 6, 2, 11, 12, 3, 7, 8, 4, 0, 9, 5 ] & (9,1) & (9,3,5) \\
(1,2)  & [ 5, 9, 13, 3, 7, 8, 4, 0, 10, 1, 6, 2, 12, 11 ] & (9,2) & (9,3,5) \\
(1,3)  & [ 5, 9, 2, 7, 6, 1, 0, 4, 8, 3, 10 ] & (5,3) & (9,3,5) \\
(1,4)  & [ 0, 1, 5, 9, 4, 11, 3, 8, 7, 2, 10, 6 ] & (5,4) & (10,4,6) \\
(2,0) & [ 4, 9, 13, 0, 10, 5, 1, 11, 6, 2, 3, 8, 12, 7 ] & (6,5) & (1,4,7) \\
(2,1)    & [ 7, 11, 1, 5, 9, 10, 6, 2, 12, 13, 3, 8, 4, 0 ] & (10,1) & (11,3,7) \\
(2,2) & [ 1, 6, 2, 9, 5, 0, 10, 3, 7, 8, 4 ] & (6,2) & (9,3,5) \\
(2,3) & [ 5, 9, 1, 6, 2, 10, 3, 7, 8, 4, 11, 0 ] & (6,3) & (9,4,5) \\
(2,4) & [ 5, 6, 1, 10, 9, 0, 8, 4, 12, 3, 7, 2, 11 ] & (6,4) & (8,4,5) \\
(3,0)  & [ 12, 13, 2, 6, 10, 0, 11, 1, 5, 9, 4, 14, 3, 8, 7 ] & (7,5) & ( 10, 6, 7 ) \\
(3,1) & [ 10, 3, 7, 8, 4, 0, 1, 6, 2, 9, 5 ] & (7,1) & (9,3,5) \\
(3,2)  & [ 11, 3, 7, 2, 10, 6, 1, 0, 4, 8, 9, 5 ] & (7,2) & (10,3,5) \\
(3,3)    & [ 11, 12, 3, 8, 4, 0, 9, 5, 1, 10, 2, 7, 6 ] & (7,3) & (9,3,6) \\
(3,4)  & [ 11, 12, 2, 7, 6, 1, 10, 0, 4, 8, 3, 13, 9, 5 ] & (7,4) & (9,3,5) \\ \hline
(0,1) & [ 12, 11, 3, 8, 4, 0, 5, 9, 1, 10, 2, 7, 6 ] & (4,6) & (9,5,6) \\
(0,2) & [ 3, 13, 4, 9, 10, 5, 0, 1, 6, 11, 7, 2, 12, 8 ] & (4,7) & (12,7,8) \\
(0,3) & [ 12, 2, 6, 1, 11, 7, 3, 13, 8, 9, 14, 10, 0, 5, 4 ] & (4,8) & (7,3,4) \\
(1,1) & [ 0, 5, 9, 8, 4, 13, 12, 3, 7, 2, 11, 1, 10, 6 ] & (5,6) & (10,5,6) \\
(1,2) & [ 4, 9, 5, 1, 12, 7, 2, 3, 8, 13, 14, 10, 0, 11, 6 ]  & (5,7) &  ( 1, 5, 8) \\
(2,0) & [ 10, 0, 5, 4, 14, 9, 8, 13, 3, 7, 12, 2, 6, 1, 11 ] & (2,10) & (7,10,4) \\
(2,1) & [ 2, 7, 0, 6, 1, 8, 3, 4, 9, 10, 5 ] & (2,6) & (1,4,5) \\
(2,2) & [ 5, 10, 11, 6, 1, 9, 4, 3, 8, 0, 7, 2 ] & (2,7) &  (1,4,5) \\
(2,3) & [ 5, 10, 1, 6, 11, 12, 7, 2, 3, 8, 0, 9, 4 ] & (2,8) & (1,4,5) \\
(3,0) &  [ 1, 6, 2, 8, 9, 3, 7, 0, 5, 4, 10 ] & (3,5)  & ( 7, -,  4 ) \\
(3,1) & [ 2, 7, 0, 8, 3, 4, 9, 1, 5, 10, 11, 6 ] & (3,6) & (1,4,6) \\
(3,2) & [ 10, 2, 7, 3, 11, 12, 4, 8, 0, 9, 1, 6, 5 ] & (3,7) & (8,4,5) \\
(3,3) &  [ 4, 9, 0, 1, 10, 5, 6, 11, 2, 12, 7, 3, 13, 8 ] & (3,8) & (3,6,8) \\
(3,4) & [ 0, 5, 10, 6, 1, 11, 7, 2, 12, 13, 3, 14, 4, 9, 8 ] & (3,9) & (11,7,8) \\ \hline
(1,0) & [ 11, 2, 7, 12, 3, 8, 13, 4, 9, 10, 6, 1, 0, 5 ] & (1,10) & (-,9,4) \\
(1,2) & [ 8, 3, 2, 7, 1, 6, 0, 10, 4, 9, 5 ] & (1,7) & (-,4,5) \\
(1,3) & [ 6, 11, 4, 9, 8, 1, 2, 7, 3, 10, 5, 0 ] & (1,8) & (-,5,6) \\
(1,4) & [ 5, 10, 2, 7, 8, 3, 11, 6, 1, 0, 9, 4, 12 ] & (1,9) & (-,4,5) \\\hline
(2,4) & [ 6, 11, 2, 7, 12, 3, 8, 4, 5, 10, 1, 0, 9, 13 ] & (2,9) & (-,5,6)  \\ \hline
(0,3)& [ 5, 0, 6, 1, 7, 2, 3, 9, 8, 4 ] & (4,3) & (4,-,-) \\
(1,2) & [ 4, 8, 3, 9, 5, 0, 6, 7, 1, 2 ] & (5,2) &  ( 4,-,-) \\
(2,0) & [ 9, 4, 0, 5, 6, 1, 7, 2, 3, 8 ] & (2, 5) & ( 7,-,-) \\
(2,1) & [ 9, 3, 4, 0, 6, 5, 1, 7, 2, 8 ] & (6,1) & ( 6,-,-) \\
(3,4)& [ 9, 4, 5, 0, 1, 6, 2, 8, 3, 7 ] & (3,4) & ( 8,-,-) \\\hline
\end{array}$$
\end{footnotesize}
\end{center}
\end{table}

\begin{thm}\label{th:145}
Let $L = \{1^a,4^b,5^c\}$ be an admissible multiset with $a,b,c \geq 0$.  Then $\BHR(L)$ holds.
\end{thm}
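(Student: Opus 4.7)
By Lemma~\ref{145-a2} it suffices to treat $a \leq 1$. When $a = 0$, or when $a = 1$ with $b = 0$ or $c = 0$, the underlying set $U$ has at most two elements and the result is immediate from Theorem~\ref{th:known}.1. The remaining task is to prove $\BHR(L)$ for $L = \{1, 4^b, 5^c\}$ with $b, c \geq 1$.

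The main step, mirroring the structure of the proof of Lemma~\ref{145-a2}, is to compile a table of $\{4, 5\}$-growable cyclic realizations of small multisets of the form $\{1, 4^{b_0}, 5^{c_0}\}$, one for each of the $20$ congruence class combinations of $(b, c) \pmod{(4, 5)}$, with $(b_0, c_0)$ as small as admissibility allows. Applying Theorem~\ref{th:multigrow} then settles $\BHR(\{1, 4^b, 5^c\})$ for all $b \geq b_0$ and $c \geq c_0$ with $(b, c) \equiv (b_0, c_0) \pmod{(4, 5)}$.

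For the finitely many small admissible triples $(1, b, c)$ left uncovered, I would proceed as in Lemma~\ref{145-a2}: supplement with auxiliary tables of $\{4\}$-growable or $\{5\}$-growable realizations (useful when only one of $b$ or $c$ still needs to grow) and, at the very bottom, include direct cyclic realizations found by the GAP heuristic described in Section~\ref{sec:intro}. A routine admissibility check for $v = b + c + 2$ shows that the divisor condition forces $c \geq 2$ whenever $4 \mid v$ and $b \geq 3$ whenever $5 \mid v$, so some congruence classes admit no very small instance at all and require no seed realization.

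The principal obstacle will be the computational search for $\{4, 5\}$-growable realizations containing only a single edge of length~$1$: this structural constraint is much tighter than in the $a = 2$ case treated in Lemma~\ref{145-a2}, since the lone length-$1$ edge gives very little freedom in arranging the Hamiltonian path so that both a block of four consecutive vertices and a block of five consecutive vertices each contain exactly one edge whose length is increased by the embedding. For some congruence classes the smallest admissible seed may therefore sit well above the analogous value in Table~\ref{T145-1}, and the auxiliary tables will consequently need to cover a wider range of small triples than they did in the $a \geq 2$ argument.
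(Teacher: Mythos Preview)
Your outline is exactly the route the paper takes: reduce to $a=1$ via Lemma~\ref{145-a2} and Theorem~\ref{th:known}.1, produce a table of $\{4,5\}$-growable seeds for all $20$ congruence classes of $(b,c)\pmod{(4,5)}$, and mop up the small leftover cases with $4$- and $5$-growable realizations plus a single direct realization. Your worry about the search being substantially harder is not borne out---the paper's $\{4,5\}$-growable seeds (Table~\ref{T145-3}) sit at essentially the same sizes as those in Table~\ref{T145-1}, and the residual cases (Table~\ref{T145-4}) are no more numerous than in the $a\geq 2$ argument.
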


\begin{proof}
By Lemma \ref{145-a2} we are left with the case $L=\{1,4^b,5^c\}$ with $b,c \geq 1$.
The multiset $L$ is admissible only if $b+c\geq 8$. Also, the following multisets are not
admissible: $\{1,4,5^{5k+7}\}$, $\{1,4^2,5^{5k+6}\}$ and $\{1,4^{4k+1},5\}$.
The $\{4,5\}$-growable cyclic realizations of $\{1, 4^b, 5^c\}$  described in the first part of Table \ref{T145-3}
allow to cover all the $20$ possibilities of the congruence class combinations of $(b,c) \pmod{(4,5)}$.
Using Theorem~\ref{th:multigrow}, this proves $\BHR(L)$ for all $b\geq 2$ and $c\geq 6$.
To complete the case $b=1$ we use the $5$-growable cyclic realization of $\{1,4,5^{11}\}$ given in Table \ref{T145-3}.
Finally, the case $c\leq 5$ can be solved using the  $4$-growable cyclic realizations of Table \ref{T145-4},
as well as the  cyclic realization  $[ 0, 5, 9, 4, 8, 3, 7, 2, 1, 6 ]$ of $\{1,4^3,4^5\}$.
\end{proof}

\begin{table}[ht]
\caption{$\{4,5\}$-growable cyclic realizations for $\{1,  4^b,5^c\}$:
they are $x$-growable at $m_x$. The congruence classes of
$(b,c)$ are taken modulo $(4,5)$.}\label{T145-3}
\begin{center}
\begin{footnotesize}
$$\begin{array}{llll}\hline
\text{Classes} & \text{Realizations} & (b,c) & (m_4, m_5) \\ \hline
(0,0) & [ 4, 9, 5, 0, 1, 6, 10, 3, 7, 2, 8 ] & (4,5) & (3,4) \\
(0,1) & [ 9, 2, 7, 3, 10, 5, 1, 0, 8, 4, 11, 6 ] & (4,6) & (4,6) \\
(0,2) & [ 4, 9, 1, 5, 0, 8, 12, 11, 6, 10, 2, 7, 3 ] & (4,7) & (3,4) \\
(0,3) & [ 6, 10, 5, 0, 9, 13, 4, 8, 3, 12, 7, 2, 1, 11 ] & (4,8) & (5,6) \\
(0,4) & [ 3, 14, 4, 9, 5, 0, 10, 6, 1, 11, 12, 7, 2, 13, 8 ] & (4,9) & (7,8) \\
(1,0) & [ 6, 11, 3, 8, 0, 12, 7, 2, 10, 5, 1, 9, 4 ] & (1,10) & (4,6) \\
(1,1)  &[ 4, 9, 5, 0, 8, 3, 12, 7, 11, 10, 1, 6, 2 ] & (5,6) & (3,4) \\
(1,2) &  [ 12, 3, 7, 11, 2, 1, 6, 10, 0, 5, 9, 4, 13, 8 ] &    (5,7) & (6,8) \\
(1,3) & [ 4, 9, 10, 3, 8, 2, 7, 1, 6, 0, 5 ] & (1,8) & (3,4) \\
(1,4) & [ 10, 3, 8, 1, 2, 7, 0, 5, 9, 4, 11, 6 ] & (1,9) & (5,6) \\
(2,0) & [ 7, 2, 11, 6, 1, 10, 5, 0, 9, 13, 12, 3, 8, 4 ] & (2,10) & (3,4) \\
(2,1) & [ 7, 11, 2, 12, 3, 8, 4, 13, 9, 5, 0, 1, 10, 6 ] & (6,6) & (6,7)  \\
(2,2) & [ 1, 6, 0, 5, 10, 3, 7, 2, 8, 9, 4 ] & (2,7) & (3,4) \\
(2,3) & [ 9, 1, 2, 7, 0, 5, 10, 3, 8, 4, 11, 6 ] & (2,8) & (5,6) \\
(2,4) & [ 9, 4, 12, 3, 8, 0, 1, 6, 11, 7, 2, 10, 5 ] & (2,9) & (4,5) \\
(3,0) & [ 4, 9, 14, 3, 8, 13, 2, 7, 12, 1, 6, 11, 10, 0, 5 ] & (3,10) & (3,4) \\
(3,1) & [ 4, 9, 3, 8, 2, 6, 10, 0, 5, 1, 7 ] & (3,6) & (3,4) \\
(3,2) & [ 1, 6, 10, 3, 8, 4, 11, 0, 7, 2, 9, 5 ] & (3,7) & (4,5) \\
(3,3) & [ 10, 5, 0, 4, 9, 1, 6, 11, 2, 3, 8, 12, 7 ] & (3,8) &  ( 6, 7 )\\
(3,4) & [ 11, 6, 1, 2, 7, 12, 3, 8, 4, 13, 9, 0, 10, 5 ] & (3,9) & (4,5) \\\hline
(1,1) & [ 1, 6, 10, 11, 2, 7, 12, 3, 8, 13, 4, 9, 0, 5 ] & (1,11) & (9,4) \\ \hline
\end{array}$$
\end{footnotesize}
\end{center}
\end{table}

\begin{table}[ht]
\caption{$4$-growable cyclic realizations for $\{1,  4^b,5^c\}$:
they are $4$-growable at $m_4$.}\label{T145-4}
\begin{center}
\begin{footnotesize}
$$\begin{array}{lll}\hline
(b,c) & \text{Realizations} & m_4 \\ \hline
(4,4) & [ 7, 3, 8, 4, 9, 0, 5, 1, 6, 2 ] &  4 \\
(5,3) & [ 9, 4, 8, 3, 7, 2, 6, 0, 1, 5 ] &  4 \\
(5,4) & [ 9, 10, 3, 7, 1, 5, 0, 6, 2, 8, 4 ] & 3 \\
(5,5)  & [ 4, 8, 9, 1, 6, 11, 3, 7, 2, 10, 5, 0 ] & 3 \\
(6,2) & [ 7, 3, 8, 4, 0, 9, 5, 1, 6, 2 ] &  5 \\
(6,3) & [ 2, 6, 1, 7, 0, 4, 8, 3, 10, 9, 5 ] & 3 \\
(6,4)  & [ 5, 6, 1, 9, 4, 0, 8, 3, 11, 7, 2, 10 ] & 3 \\
(6,5)  & [ 5, 10, 6, 1, 9, 0, 4, 8, 12, 11, 3, 7, 2 ] & 4 \\
(7,1) & [ 7, 3, 9, 4, 8, 2, 6, 0, 1, 5 ] &  4\\
(7,2)  & [ 8, 4, 0, 10, 3, 7, 1, 6, 2, 9, 5 ]  & 4 \\
(7,3) &  [ 4, 8, 0, 1, 5, 9, 2, 6, 11, 7, 3, 10 ] & 7\\
(7,4)  & [ 8, 3, 12, 0, 4, 9, 5, 1, 10, 2, 6, 11, 7 ]  & 6 \\
(7,5) & [ 5, 10, 0, 9, 13, 4, 8, 7, 3, 12, 2, 6, 1, 11 ]  & 4 \\
(8,1)  & [ 7, 3, 10, 0, 4, 8, 1, 6, 2, 9, 5 ] &  3 \\
(8,2) & [ 4, 8, 0, 5, 9, 1, 6, 2, 10, 11, 3, 7 ]  & 3 \\
(8,3) & [ 6, 10, 11, 2, 7, 3, 12, 8, 4, 0, 5, 9, 1 ]  & 5 \\
(9,2)    & [ 4, 9, 8, 12, 3, 7, 11, 2, 6, 10, 1, 5, 0 ] & 3 \\
(10,1) & [ 6, 10, 1, 5, 9, 0, 4, 8, 12, 11, 2, 7, 3 ] &   3 \\   \hline
\end{array}$$
\end{footnotesize}
\end{center}
\end{table}

\section{A Complete Solution for $U \subseteq \{1,2,3,4 \}$}\label{sec:1234} 

In this section we prove $\BHR\left(\{1^a,2^b,3^c,4^d\}\right)$.  In view of Theorem~\ref{th:known}.2 and~\ref{th:known}.3, we may assume $c,d\geq 1$.  Also, by Theorem~\ref{th:known}.4 we have as a starting point that $\BHR(L)$ holds for $a\geq 3$ and also for $a=2$ when $b\geq 1$.  We begin by closing the case $a=2$.

\begin{lem}\label{1^234}
Let $L=\{1^2, 3^c,4^d\}$ be an admissible multiset with $c,d\geq 1$.
Then $\BHR(L)$ holds.
\end{lem}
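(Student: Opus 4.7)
The strategy follows the template of Lemma~\ref{145-a2}. Since $a=2$ is to be held fixed, the growable realizations I construct must be able to grow in the elements $3$ and $4$ but not in $1$; that is, they will be $\{3,4\}$-growable (and decidedly not $1$-growable). Before starting, I would pin down the admissibility conditions: for $L=\{1^2,3^c,4^d\}$ with $c,d\geq 1$, one has $v=c+d+3$, and the divisor conditions give no constraint beyond requiring $4\leq \lfloor v/2\rfloor$, i.e., $c+d\geq 5$.

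First, I would build a table of $\{3,4\}$-growable cyclic realizations of $\{1^2,3^c,4^d\}$ for twelve base cases, one for each combination of congruence classes $(c,d)\pmod{(3,4)}$. By Theorem~\ref{th:multigrow}, each such realization settles $\BHR(L)$ for every admissible $L=\{1^2,3^{c'},4^{d'}\}$ with $c'\equiv c\pmod 3$, $d'\equiv d\pmod 4$, $c'\geq c$ and $d'\geq d$. This disposes of all cases in which $c$ and $d$ are simultaneously at least some small thresholds $c_0$ and $d_0$ (expected to be on the order of $c_0\leq 6$, $d_0\leq 7$).

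Second, I would handle the residual admissible multisets in which exactly one of $c,d$ is small. For each small fixed value of $d$ I would provide $3$-growable cyclic realizations of $\{1^2,3^c,4^d\}$, one for each residue of $c\pmod 3$, to push $c$ arbitrarily high; symmetrically, for each small fixed value of $c$ I would supply $4$-growable realizations covering the four residues of $d\pmod 4$. The small remaining block where both $c$ and $d$ are below their respective thresholds (subject to $c+d\geq 5$) is then a finite list, which I would resolve with direct cyclic realizations in a final table, exactly as is done in Tables~\ref{T145-2}, \ref{T145-4} and the closing sentence of Lemma~\ref{145-a2}.

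The main obstacle is producing the twelve base realizations: each must simultaneously be $3$-growable at some $m_3$ and $4$-growable at some $m_4$, meaning that the vertex sets $\{m_3-2,m_3-1,m_3\}$ and $\{m_4-3,\dots,m_4\}$ must each meet exactly one \emph{non-wrap} edge of the path under the respective embeddings. Such realizations are not hard to come by with the heuristic search described in Section~\ref{sec:intro}; once they are in hand, verifying that a listed sequence has the stated edge-length multiset and the claimed growability parameters is routine, and the induction from Theorem~\ref{th:multigrow} completes the argument.
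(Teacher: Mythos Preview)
Your proposal is correct and matches the paper's approach almost exactly: the paper also pins down admissibility as $c+d\geq 5$, exhibits twelve $\{3,4\}$-growable base realizations (one per class $(c,d)\pmod{(3,4)}$), invokes Theorem~\ref{th:multigrow}, and then mops up the residual small-$d$ cases with $3$-growable realizations plus one ad hoc realization for $\{1^2,3,4^4\}$. The only (harmless) difference is that you plan a symmetric round of $4$-growable realizations for small~$c$, which the paper avoids simply by choosing its twelve base cases with $c\leq 3$ already.
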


\begin{proof}
First, note that $L$ is admissible only if $c+d\geq 5$.
The first part of Table~\ref{Tab1^234} collects  $\{3,4\}$-growable cyclic realizations for $L$
in  each of the $12$ possibilities of congruence class combinations of $(c,d)\pmod {(3,4)}$.
Using Theorem~\ref{th:multigrow},  this proves $\BHR(L)$  except in the following cases:  $d=1,2$; $d=3$ and $c\not\equiv 0\pmod{3}$;
$d=4$ and $c\equiv 1 \pmod{3}$.
So, we prove the validity of $\BHR(L)$ for these exceptional cases using
the $3$-growable cyclic realizations
for the cases $(c,d)\in \{(4,1),(5,1),(6,1),(3,2),(4,2),(5,2),(2,3),(4,3),(4,4)\}$
given in Table \ref{Tab1^234}.
The last case left open is $L=\{1^2,3,4^4 \}$, for which we take the following cyclic realization: $[ 0, 4, 5, 1, 2, 6, 3, 7 ]$.
\end{proof}

\begin{table}[tp]
\caption{$\{3,4\}$-growable cyclic realizations for $\{1^2,  3^c,4^d\}$:
they are $x$-growable at $m_x$. The congruence classes of
$(c,d)$ are taken modulo $(3,4)$.}\label{Tab1^234}
\begin{center}
\begin{footnotesize}
\begin{tabular}{lllll}\hline
Classes & Realizations & $(c,d)$ & $(m_3,m_4)$ & Missing cases\\  \hline
$(0,0)$ & $[ 3, 7, 1, 4, 0, 9, 2, 6, 5, 8 ]$ & $(3, 4)$ & $(2,3)$  & \\
$(0,1)$ & $[ 3, 7, 10, 2, 6, 5, 1, 9, 8, 4, 0 ]$ & $( 3, 5) $ & $(2,5)$ & $d=1$ \\
$(0,2)$ & $[ 6, 9, 10, 2, 1, 5, 8, 0, 4, 7, 3, 11 ]$ & $( 3, 6) $ & $(5,6)$ &  $d=2$ \\
$(0,3)$ & $[ 1, 5, 6, 2, 8, 0, 3, 7, 4 ]$ & $( 3, 3)$ & $(3,4)$ &  \\
$(1,0)$ & $[ 3, 7, 11, 10, 6, 2, 1, 5, 9, 0, 4, 8 ]$ & $( 1, 8) $ & $(2,7)$ & $d=4$ \\
$(1,1)$ & $[ 3, 6, 2, 7, 8, 4, 0, 1, 5 ]$ & $(1, 5)$ & $(2,4)$  & $d=1$ \\
$(1,2)$ & $[ 5, 9, 8, 4, 1, 7, 3, 2, 6, 0 ]$ & $(1, 6)$ & $(4,5)$ & $d=2$ \\
$(1,3)$ & $[ 4, 8, 7, 3, 0, 1, 5, 9, 2, 6, 10 ]$ & $(1, 7 )$ & $(3,6)$ & $d=3$ \\
$(2,0)$ & $[ 4, 8, 0, 3, 7, 6, 1, 5, 2 ]$ & $( 2, 4 )$ & $(3,4)$ &  \\
$(2,1)$ & $[ 5, 9, 3, 6, 2, 8, 7, 4, 0, 1 ]$ & $(2,5)$ & $(4,5)$ & $d=1$\\
$(2,2)$ & $[ 4, 0, 10, 6, 9, 2, 5, 1, 8, 7, 3 ]$ & $(2, 6)$ & $(2,3)$ & $d=2$  \\
$(2,3)$ & $[ 8, 0, 4, 3, 11, 7, 6, 9, 1, 5, 2, 10 ]$ & $(2, 7) $ & $(8,3)$ & $d=3$ \\\hline
$(0,1)$ & $[ 5, 8, 9, 2, 6, 3, 0, 1, 4, 7 ]$ & $( 6, 1)$ & $(4,5)$ \\
$(0,2)$ & $[ 6, 3, 2, 5, 1, 4, 0, 7 ]$ & $( 3, 2) $ & $(3,-)$ \\
$(1,0)$ & $[ 8, 5, 6, 2, 10, 9, 1, 4, 0, 7, 3 ]$ & $(4,4)$ & $(2,3)$ & $(c,d)=(1,4)$ \\
$(1,1)$ & $[ 2, 5, 6, 3, 7, 4, 1, 0 ]$ & $( 4, 1 ) $ & $(4,-)$ \\
$(1,2)$ & $[ 3, 7, 8, 5, 2, 1, 4, 0, 6 ]$ & $(4, 2 )$ & $(2,3)$ \\
$(1,3)$ & $[ 2, 6, 3, 0, 9, 5, 1, 8, 7, 4 ]$ & $(4,3)$  & $(3,5)$ \\
$(2,1)$ & $[ 0, 4, 1, 7, 8, 2, 5, 6, 3 ]$ & $( 5, 1)$ & $(2,3)$  \\
$(2,2)$ & $[ 5, 8, 1, 4, 7, 3, 2, 6, 9, 0 ]$ & $( 5, 2)$ & $(4,5)$ \\
$(2,3)$ & $[ 7, 6, 2, 3, 0, 4, 1, 5 ]$ & $(2, 3)$ & $(2,-)$ \\
\hline
\end{tabular}
\end{footnotesize}
\end{center}
\end{table}

\begin{lem}\label{134}
Let $L=\{1, 2^b, 3^c,4^d\}$ be an admissible multiset, where
$b\geq 0$ is even and $c,d\geq 1$.
Then $\BHR(L)$ holds.
\end{lem}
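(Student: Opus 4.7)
My plan is to mimic Lemma~\ref{1^234} in one dimension higher. Since $a=1$ is fixed and $b$ must remain even, growing any cyclic realization by $x^x$ for $x\in\{2,3,4\}$ leaves the number of $1$'s unchanged and shifts the number of $2$'s by an even amount, hence preserves every parity we need. By Theorem~\ref{th:multigrow}, one $\{2,3,4\}$-growable cyclic realization for each congruence class of $(c,d)\pmod{(3,4)}$, taken with $b_0$ even and $(b_0,c_0,d_0)$ as small as possible, will therefore settle $\BHR(L)$ for every admissible $L$ with $b\geq b_0$, $c\geq c_0$ and $d\geq d_0$ in that class.

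The first step is to produce the principal table of $12$ such seeds (one for each class), recording for each the realization together with the growable positions $m_2, m_3, m_4$. These are found via the heuristic GAP search described in Section~\ref{sec:intro}. Applying Theorem~\ref{th:multigrow} then disposes of $\BHR(L)$ whenever the parameters lie above the seed thresholds. Ideally each seed has $b_0 = 0$, so that every even $b$ in the corresponding class is reached; where no such seed is available, a seed with $b_0 = 2$ is used and the case $b=0$ is deferred.

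The second step is to handle the finitely many admissible exceptions, namely those with one of $b$, $c$, $d$ below its seed value. For these I would assemble a secondary table of more restricted growable realizations ($\{2,3\}$- or $\{2,4\}$-growable when $d$ or $c$ respectively is small, and $2$-growable ones when only $b$ may vary), and for the very small values of $v$ a short list of plain cyclic realizations. The admissibility condition for $v = 1 + b + c + d$ enters here to eliminate multisets that cannot be realized at all: when $3\mid v$ one needs $b+d \geq 2$, and when $4\mid v$ one needs $b+c \geq 3$; these constraints bound how tiny a genuine exception can be and therefore bound the size of the secondary table.

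The main obstacle is bookkeeping rather than ideas: locating seeds with small enough $(b_0,c_0,d_0)$ for every one of the $12$ classes, verifying their $\{2,3,4\}$-growability in the sense of the definition (each of $m_x-x+1,\ldots,m_x$ incident with precisely one edge whose length grows, and no other edge doing so), and listing every admissible exception together with an appropriate short realization. Once the two tables are assembled, Theorem~\ref{th:multigrow} and direct inspection together finish the argument.
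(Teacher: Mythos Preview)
Your proposal is correct and follows essentially the same route as the paper: a principal table of $\{2,3,4\}$-growable seeds for $\{1,3^c,4^d\}$ (so $b_0=0$) covering the twelve classes of $(c,d)\pmod{(3,4)}$, Theorem~\ref{th:multigrow} to handle all large parameters, and then a secondary table of $\{2,3\}$-growable and $2$-growable realizations (plus the admissibility observation that $\{1,3,4^{4k+5}\}$ is excluded) to mop up the small-$d$ and small-$(c,d)$ exceptions at $b=0,2,4$. The only cosmetic difference is that the paper never needs a $\{2,4\}$-growable seed in this lemma; $\{2,3\}$-growable ones suffice for the small-$d$ column.
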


\begin{proof}
Suppose first $d\geq 5$. We start with the $\{2,3,4\}$-growable cyclic realizations of $\{1,3^c,4^d\}$ described in the first part of Table \ref{Tab234}
(note that in this case $c+d\geq 6$).
These realizations allow to cover all the $12$ possibilities of the congruence class combinations of $(c,d)\pmod {(3,4)}$.
Using Theorem~\ref{th:multigrow}  this proves $\BHR(L)$ except when $c=1$ and $d\not \equiv 2 \pmod{4}$.
So, suppose $c=1$.
Table \ref{Tab234} also gives $\{2,3,4\}$-growable cyclic realizations for $d=7,8$, proving the validity of $\BHR(L)$
for $b\geq 0$ even, $c=1$ and $d\equiv 0,3\pmod 4$. Hence, we may assume $d\equiv 1 \pmod 4$.
Note that the multiset $L=\{1,3,4^{4k+5}\}$ does not satisfy the necessary condition, as $4k+5 > v-4= 4k+4$.
On the other hand, a  $\{2,3,4\}$-growable cyclic realization of $\{1,2^2,3,4^5\}$ is given in
Table \ref{Tab234-2}. 
Next, we consider the cases $1\leq d\leq 4$.
Table \ref{Tab234} provides $\{2,3\}$-growable cyclic realizations for  $\{1, 3^c,4^d\}$,
in the following cases:
$d=1$ and $5\leq c \leq 7$; $d=2$ and $4\leq c \leq 6$; $d=3,4$ and $3\leq c \leq 5$.
It also provides a $2$-growable cyclic realization for the multiset $\{1,3^2,4^4\}$.
This completes the analysis for the admissible multisets with $b=0$.

Now, Table \ref{Tab234-2} gives
$2$-growable cyclic realizations for the multiset $\{1,2^2, 3^c,4^d\}$ when
$(c,d)\in \{(1,3), (1,4),(2,2), (2,3), (3,1),(3,2), (4,1)\}$, that is with $4\leq c+d \leq 5$.  
This completes the analysis for the admissible multisets with $b=2$.
Finally, Table \ref{Tab234-2} also gives $2$-growable cyclic realizations for the multiset $\{1,2^4, 3^c,4^d\}$ for each 
$(c,d)$ in the set  $\{(1,1),(1,2),(2,1)\}$, concluding our proof.
\end{proof}

\begin{table}[tp]
\caption{$\{2,3,4\}$-growable cyclic realizations for $\{1,3^c,4^d\}$: they are $x$-growable at $m_x$.
The congruence classes of $(c,d)$ are taken modulo $(3,4)$.}\label{Tab234}
\begin{center}
\begin{footnotesize}
\begin{tabular}{lllll}\hline
Classes & Realizations & $(c,d)$ & $(m_2,m_3,m_4)$ & Missing cases \\  \hline
$(0,0)$ & $[ 3, 6, 1, 4, 0, 5, 2, 7, 8 ]$ & $(3,4)$ & $(6,2,3)$ \\
$(0,1)$ & $[ 3, 6, 2, 8, 4, 1, 7, 0, 9, 5 ]$ &   $(3, 5)$  & $(2,4,5)$ & $d=1$ \\
$(0,2)$ & $[ 4, 5, 1, 8, 0, 3, 7, 10, 6, 2, 9 ]$ &  $(3, 6)$ & $(7,3,4)$  & $d=2$\\
$(0,3)$         & $[ 3, 7, 6, 2, 10, 1, 5, 9, 0, 4, 8, 11 ]$ & $(3,7)$ & $(9,2,6)$ & $d=3$ \\
$(1,0)$        & $[ 4, 7, 0, 6, 3, 9, 8, 2, 5, 1 ]$ & $(4, 4)$ & $(7,3,4)$ & $c=1$ \\
$(1,1)$ & $[ 6, 9, 2, 10, 3, 7, 4, 0, 1, 8, 5] $ &  $(4, 5)$ & $(4,5,6)$ & $c=1$ or $d=1$ \\
$(1,2)$ & $[ 2, 6, 1, 5, 0, 3, 7, 8, 4 ]$ & $(1, 6)$ & $(1,3,4)$ & $d=2$ \\
$(1,3)$        & $[ 4, 7, 1, 5, 0, 8, 2, 6, 3 ]$ & $(4, 3)$ & $(2,3,4)$ & $c=1$ \\
$(2,0)$        & $[ 7, 11, 3, 4, 0, 8, 5, 1, 9, 6, 2, 10 ]$ & $(2,8)$ & $(6,8,3)$ & $d=4$  \\
$(2,1)$ & $[ 3, 7, 2, 6, 0, 1, 5, 8, 4 ]$ &  $(2,5)$  & $(2,3,4)$ & $d=1$ \\
$(2,2)$ & $[ 4, 5, 1, 8, 2, 6, 0, 7, 3, 9 ]$ & $(2,6)$  & $(7,3,4)$ & $d=2$ \\
$(2,3)$ & $[ 4, 8, 1, 0, 7, 10, 3, 6, 2, 9, 5 ]$ & $(2,7)$ & $(3,4,5)$ & $d=3$ \\\hline
$(1,0)$ & $[ 4, 5, 1, 8, 0, 7, 3, 10, 6, 2, 9 ]$ &  $( 1, 8 )$  & $(7,3,4)$\\
$(1,3)$ & $[ 3, 7, 1, 4, 8, 2, 6, 0, 9, 5 ]$ &  $(1,7)$  & $(2,4,5)$ \\\hline
$(0,1)$         & $[ 7, 1, 4, 5, 8, 2, 6, 0, 3 ]$ & $(6,1)$ &  $(6,2,-)$ \\
$(0,2)$         & $[ 5, 6, 3, 9, 2, 8, 1, 4, 7, 0 ]$ & $(6,2)$ & $(4,5,-)$\\
$(0,3)$ & $[ 0, 3, 7, 6, 2, 5, 1, 4 ]$ & $(3,3)$ & $(1,2,-)$ \\
$(1,1)$        & $[ 5, 2, 9, 8, 1, 4, 7, 0, 6, 3 ]$  &  $(7,1)$ & $(7,4,-)$ \\
$(1,2)$        & $[ 0, 5, 2, 6, 1, 4, 3, 7 ]$ & $(4,2)$ &  $(2,3,-)$ \\
$(2,0)$        & $[ 10, 9, 2, 6, 3, 0, 7, 4, 1, 8, 5 ]$ & $(5,4)$  & $(8,4,5)$ & $(c,d)=(2,4)$ \\
$(2,1)$        & $[ 2, 5, 0, 1, 6, 3, 7, 4 ]$ & $(5,1)$ & $(3,4,-)$ \\
$(2,2)$        & $[ 2, 5, 8, 7, 3, 0, 6, 1, 4 ]$ & $(5,2)$ & $(1,3,-)$ \\
$(2,3)$        & $[ 3, 0, 6, 7, 1, 4, 8, 5, 2, 9 ]$ & $(5,3)$ & $(5,2,-)$ \\ \hline
$(2,0)$ & $[ 2, 6, 7, 3, 0, 4, 1, 5 ]$ & $( 2, 4 )$ & $(1,-,-)$ \\
\hline
\end{tabular}
\end{footnotesize}
\end{center}
\end{table}

\begin{table}[tp]
\caption{$\{2,3,4\}$-growable cyclic realizations for $\{1, 2^b, 3^c,4^d\}$, with $b\geq 2$ even:
they are $x$-growable at $m_x$. The congruence classes of $(c,d)$ are taken modulo $(3,4)$.}\label{Tab234-2}
\begin{center}
\begin{footnotesize}
\begin{tabular}{llll}\hline
Classes & Realizations & $(b,c,d)$ & $(m_2,m_3,m_4)$ \\  \hline
$(1,1)$ & $[ 7, 8, 2, 6, 0, 4, 1, 9, 5, 3 ]$ & $(2,1,5)$ & $(6,2,3)$ \\\hline
$(0,1)$ & $[ 6, 4, 1, 2, 5, 7, 3, 0 ]$ & $(2,3,1)$ & $(5,2,-)$ \\
$(0,2)$ & $[ 4, 7, 5, 1, 8, 0, 3, 6, 2 ]$ & $(2,3,2)$ & $(1,3,4)$ \\
$(1,0)$ & $[ 1, 5, 8, 6, 2, 7, 0, 4, 3 ]$ & $(2, 1, 4)$ & $(6,2,3)$ \\
$(1,1)$ & $[ 3, 6, 0, 4, 1, 8, 7, 5, 2 ]$ & $(2,4,1)$ & $(6,2,3)$ \\
$(1,3)$ & $[ 1, 0, 4, 6, 2, 5, 3, 7 ]$ & $(2, 1, 3)$ & $(3,4,-)$ \\
$(2,2)$ & $[ 7, 6, 2, 4, 1, 5, 3, 0 ]$ & $(2, 2, 2)$ & $(1,3,-)$ \\
$(2,3)$ & $[ 3, 5, 7, 8, 2, 6, 1, 4, 0 ]$ & $(2, 2, 3 )$ & $(6,2,3)$ \\\hline
$(1,1)$ & $[ 0, 3, 1, 7, 5, 4, 2, 6 ]$ & $(4, 1, 1)$ & $(1,2,-)$\\
$(1,2)$ & $[ 3, 5, 7, 8, 6, 2, 0, 4, 1 ]$ & $( 4, 1, 2)$ & $(6,2,3)$ \\
$(2,1)$ & $[ 1, 3, 5, 8, 2, 4, 0, 7, 6 ]$ & $(4, 2, 1 )$ & $(6,2,-)$ \\
\hline
\end{tabular}
\end{footnotesize}
\end{center}
\end{table}

\begin{lem}\label{1234}
Let $L=\{1, 2^b, 3^c,4^d\}$ be an admissible multiset, where  $b\geq 1$ is odd and $c,d\geq 1$. Then $\BHR(L)$ holds.
\end{lem}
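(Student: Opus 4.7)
The plan parallels the proof of Lemma~\ref{134}: I would build a table of $\{2,3,4\}$-growable cyclic realizations covering all twelve combinations of congruence classes of $(c,d) \pmod{(3,4)}$, and then invoke Theorem~\ref{th:multigrow} to grow them into arbitrary admissible $L$. The essential difference is that the parity of~$b$ must remain odd after growing. Since Theorem~\ref{th:multigrow} (with $x=2$) inserts copies of~$2$ in blocks of size~$2$, hence in even amounts, the base realizations must themselves already contain exactly one copy of~$2$. So instead of starting from $\{1, 3^c, 4^d\}$ as in Lemma~\ref{134}, I would start from $\{1, 2, 3^c, 4^d\}$.

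More concretely, for each of the twelve residues of $(c,d) \pmod{(3,4)}$, I would exhibit a $\{2,3,4\}$-growable cyclic realization of $\{1, 2, 3^c, 4^d\}$ at the smallest admissible representative in that class, with $d$ taken reasonably large (say $d \geq 5$) to avoid admissibility clashes near the boundary. Theorem~\ref{th:multigrow} then yields realizations of $\{1, 2^b, 3^{c'}, 4^{d'}\}$ for every odd $b \geq 1$ and every $(c',d')$ with $c' \geq c$, $d' \geq d$, and $(c',d') \equiv (c,d) \pmod{(3,4)}$. The heuristic GAP search described in the introduction should readily produce these base realizations.

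Next, I would handle the small-$d$ exceptions (roughly $1 \leq d \leq 4$) by supplying $\{2,3\}$-growable cyclic realizations of $\{1, 2, 3^c, 4^d\}$ for each residue of $c \pmod 3$ at small~$c$; Theorem~\ref{th:multigrow} then adds copies of $2^2$ and $3^3$ without altering~$d$, matching the strategy of Lemma~\ref{134}. Some multisets in these ranges fail the divisor condition, for instance $\{1, 2, 3, 4^{4k+5}\}$, where the count of multiples of~$4$ exceeds $v - 4$; these must be explicitly excluded, exactly as in Lemma~\ref{134}. A final short table of $2$-growable or direct cyclic realizations then disposes of the sporadic admissible cases with very small values of $(b,c,d)$ and $b \in \{1,3\}$ that are not yet reached.

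The main obstacle will be the combinatorial search for base realizations that are simultaneously growable at three different values of~$x$, coupled with the bookkeeping needed to verify that every admissible $L = \{1, 2^b, 3^c, 4^d\}$ with $b$ odd is covered by exactly one of the tables. Both tasks are computational in nature and follow the template established in Lemma~\ref{134} and Theorem~\ref{th:123}.
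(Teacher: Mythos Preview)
Your plan is essentially identical to the paper's proof: start from $\{2,3,4\}$-growable realizations of $\{1,2,3^c,4^d\}$ for each of the twelve classes of $(c,d)\pmod{(3,4)}$, patch the small-$d$ gaps with $\{2,3\}$-growable realizations, and finish the remaining small cases with $2$-growable realizations at slightly larger odd~$b$. Two small corrections to your bookkeeping: the multiset $\{1,2,3,4^{4k+5}\}$ is actually admissible (here $v=4k+9$ is odd, so no divisor condition on~$4$ applies), and for the sporadic case $(c,d)=(1,1)$ you will need $b=5$ rather than $b\in\{1,3\}$, since $v\geq 8$ forces $b+c+d\geq 5$.
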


\begin{proof}
The first part of Table \ref{Tab234-odd} gives  $\{2,3,4\}$-growable cyclic realizations for $\{1, 2, 3^c, 4^d\}$
for  each of the $12$ possibilities of congruence class combinations of $(c,d) \pmod{(3,4)}$. Note that $c+d\geq 5$.
Using Theorem~\ref{th:multigrow},  this proves $\BHR(L)$  except for the following cases:
$d=1,2$; $d=3$ and $c\not \equiv 0\pmod{3}$;  $d=4$ and $c\equiv 1 \pmod 3$.
So, we prove the validity of $\BHR(L)$ for these exceptional cases using
$\{2,3\}$-growable cyclic realizations for  $\{1,2, 3^c,4^d\}$ and a $2$-growable cyclic realization for  $\{1, 2, 3, 4^4\}$, which can be found in Table~\ref{Tab234-odd}.

To conclude the proof we have to consider the cases when $c+d\leq 4$.
Table \ref{Tab234-odd} also provides $2$-growable cyclic realizations for
$\{1,2^3, 3^c,4^d\}$ when $(c,d)$ is in the set $\{(1,2),(1,3), (2,1), (2,2), (3,1)\}$,
and for $\{1,2^5, 3,4\}$.
\end{proof}

\begin{table}[htp]
\caption{$\{2,3,4\}$-growable cyclic realizations for $\{1, 2^b, 3^c,4^d\}$, with $b\geq 1$ odd:
they are $x$-growable at $m_x$. The congruence classes of $(c,d)$ are taken modulo $(3,4)$.}\label{Tab234-odd}
\begin{center}
\begin{footnotesize}
\begin{tabular}{lllll}\hline
Classes & Realizations & $(b,c,d)$ & $(m_2,m_3,m_4)$ & Missing cases \\  \hline
$(0,0)$ & $[ 9, 2, 6, 0, 4, 1, 7, 8, 5, 3 ]$ & $(1, 3, 4)$ & $(6,2,3)$ \\
$(0,1)$ & $[ 5, 8, 1, 9, 10, 2, 6, 4, 0, 7, 3 ]$ & $( 1, 3, 5)$ & $(8,4,5)$ & $d=1$ \\
$(0,2)$ & $[ 3, 5, 9, 1, 4, 0, 8, 7, 10, 6, 2, 11 ]$ & $(1, 3, 6)$ & $(6,2,3)$ & $d=2$\\
$(0,3)$ & $[ 4, 7, 3, 0, 1, 5, 8, 6, 2 ]$ & $( 1, 3, 3 )$ & $(1,3,4)$ \\
$(1,0)$ & $[ 10, 6, 2, 11, 3, 7, 9, 1, 5, 4, 0, 8 ]$ & $(1, 1, 8) $ & $(7,8,4)$ & $d=4$ \\
$(1,1)$ & $[ 2, 6, 7, 3, 0, 5, 1, 8, 4 ]$ & $( 1, 1, 5)$ & $(1,3,4)$ & $d=1$ \\
$(1,2)$ & $[ 3, 4, 0, 6, 2, 8, 1, 5, 9, 7 ]$ & $( 1, 1, 6 )$ & $(6,2,3)$ &  $d=2$\\
$(1,3)$ & $[ 9, 2, 6, 5, 1, 10, 3, 7, 0, 8, 4 ]$ & $(1, 1, 7 )$ & $(8,3,5)$ &  $d=3$\\
$(2,0)$ & $[ 2, 6, 1, 4, 0, 8, 5, 7, 3 ]$ & $( 1, 2, 4)$ & $(1,2,3)$ \\
$(2,1)$ & $[ 8, 1, 5, 4, 0, 6, 2, 9, 7, 3 ]$ & $( 1, 2, 5 )$ & $(7,2,4)$ & $d=1$\\
$(2,2)$ & $[ 3, 7, 0, 4, 6, 10, 2, 5, 1, 8, 9 ]$ & $(1, 2, 6)$ & $(7,2,4)$ & $d=2$\\
$(2,3)$ & $[ 7, 11, 3, 4, 0, 9, 1, 5, 8, 10, 6, 2 ]$ & $(1, 2, 7)$ & $(6,8,3)$ & $d=3$\\\hline
$(0,1)$ & $[ 4, 7, 9, 2, 6, 3, 0, 1, 8, 5 ]$ & $(1, 6, 1)$ & $(3,4,5)$  \\
$(0,2)$ & $[ 2, 5, 1, 0, 6, 3, 7, 4 ]$ & $(1, 3, 2)$ & $(3,4,-)$ \\
$(1,0)$ & $[ 3, 7, 10, 8, 0, 4, 5, 1, 9, 6, 2 ]$ & $(1, 4, 4)$ & $(7,2,4)$ & $(c,d)=(1,4)$\\
$(1,1)$ & $[ 0, 6, 3, 7, 4, 1, 2, 5 ]$ & $(1, 4, 1)$ & $(2,4,-)$  \\
$(1,2)$ & $[ 3, 7, 6, 0, 4, 1, 8, 5, 2 ]$ & $(1, 4, 2)$ & $(1,2,3)$ \\
$(1,3)$ & $[ 3, 6, 9, 7, 0, 1, 5, 2, 8, 4 ]$ & $(1, 4, 3)$ & $(2,3,4)$  \\
$(2,1)$ & $[ 3, 6, 7, 1, 4, 0, 2, 5, 8 ]$ & $(1, 5, 1)$ & $(6,2,3)$ & \\
$(2,2)$ & $[ 7, 1, 4, 0, 2, 5, 8, 9, 6, 3 ]$ & $(1, 5, 2)$ & $(6,2,3)$  \\
$(2,3)$ & $[ 4, 0, 3, 1, 5, 2, 6, 7 ]$ & $(1, 2, 3)$ & $(1,2,-)$ \\ \hline
$(1,0)$ & $[ 3, 7, 6, 2, 0, 4, 1, 5 ]$ & $(1,1,4)$ & $(1,-,-)$ \\\hline
$(0,1)$ & $[ 3, 6, 8, 7, 5, 2, 0, 4, 1 ]$ & $( 3, 3, 1)$ & $(6,2,3)$ & \\
$(1,0)$ & $[ 4, 6, 0, 8, 7, 3, 9, 1, 5, 2 ]$ &  $(3, 1, 4)$ & $(7,3,4)$ \\
$(1,1)$ & $[ 4, 2, 0, 8, 6, 3, 1, 5, 7 ]$ & $( 5, 1, 1)$ & $(6,3,-)$ \\
$(1,2)$ & $[ 4, 6, 2, 5, 3, 7, 1, 0 ]$ & $(3, 1, 2)$ & $(3,4,-)$ \\
$(1,3)$ & $[ 2, 6, 7, 0, 3, 5, 1, 8, 4 ]$ & $( 3, 1, 3) $ &  $(1,3,4)$ \\
$(2,1)$ & $[ 5, 2, 0, 1, 7, 3, 6, 4 ]$ & $(3, 2, 1)$ & $(3,4,-)$ \\
$(2,2)$ & $[ 4, 6, 0, 2, 5, 1, 8, 7, 3 ]$ & $(3, 2, 2)$ & $(2,3,4)$ \\
\hline
\end{tabular}
\end{footnotesize}
\end{center}
\end{table}

\begin{lem}\label{234odd}
Let $L=\{2^b, 3^c,4^d\}$ be an admissible multiset, where  $b\geq 1$ is odd and $c,d\geq 1$. Then $\BHR(L)$ holds.
\end{lem}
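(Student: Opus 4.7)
The plan is to mirror the structure of the proofs of Lemmas~\ref{1^234}, \ref{134}, and \ref{1234}, adapting to the absence of the element~$1$ in~$L$. First I would construct a table of $\{2,3,4\}$-growable cyclic realizations of the base multiset $\{2, 3^c, 4^d\}$ (i.e.~taking $b=1$) for each of the $12$ possible congruence class combinations of $(c,d) \pmod{(3,4)}$, choosing the smallest admissible $(c,d)$ within each class to minimize the number of leftover exceptions. Since growing by $2$ preserves the parity of~$b$, Theorem~\ref{th:multigrow} then yields $\BHR(L)$ for every odd $b \geq 1$ and all $(c,d)$ that are componentwise at least the base values.

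The second step is to mop up the small exceptions by congruence class. These split naturally into: cases where $d$ is below the base value (typically $d \in \{1,2,3,4\}$ with $c \not\equiv c_0 \pmod 3$); and cases where $c$ is below the base value. For each remaining $(c,d)$ one can supply either a $\{2,3\}$-growable or $\{2,4\}$-growable cyclic realization of an appropriate $\{2^{b_0}, 3^c, 4^d\}$ with $b_0$ odd (usually $b_0 = 1$ or $3$), and finally a handful of individual cyclic realizations for the multisets so small that no further growth is needed. Every auxiliary realization must be at least $2$-growable so that $b$ can be made arbitrarily large while remaining odd.

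The third step is admissibility bookkeeping. Writing $v = b+c+d+1$, the divisor conditions reduce to: (a)~$2 \mid v$ forces $c \geq 1$, which holds by hypothesis; (b)~$3 \mid v$ forces $b+d \geq 2$, which holds since $b,d \geq 1$; (c)~$4 \mid v$ forces $b + c \geq 3$. Condition~(c) is the only nontrivial one and fails precisely for the multisets $\{2, 3, 4^{4k+2}\}$, which are therefore inadmissible and may be discarded from the analysis.

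The main obstacle will be Step~2. Because $L$ contains no~$1$, we cannot invoke Lemma~\ref{lem:1grow} to splice perfect linear realizations onto existing growable ones the way the earlier lemmas do; every exceptional small multiset must be realized by an explicit example. Moreover, the requirement that $b$ stay odd forces each supplementary realization to be $2$-growable, which restricts the local structure around the growth point and tends to make the smallest cases (e.g.\ $c=1$ with small $d$, or $d=1$ with small $c$) the most delicate. The computer search described in Section~\ref{sec:intro} is expected to produce all needed realizations in a modest table analogous to Tables~\ref{Tab234} and~\ref{Tab234-odd}.
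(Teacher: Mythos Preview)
Your plan is essentially the paper's own proof: a first table of $\{2,3,4\}$-growable realizations of $\{2,3^c,4^d\}$ covering the twelve residue classes $(c,d)\pmod{(3,4)}$, then $\{2,3\}$- and $\{2,4\}$-growable patches with $b_0\in\{1,3,5\}$ for the small leftovers, with the $c=1$ strip handled last.

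Two small corrections. First, your admissibility computation slipped: with $b=c=1$ one has $v=d+3$, so $4\mid v$ forces $d\equiv 1\pmod 4$; the inadmissible family is $\{2,3,4^{4k+5}\}$, not $\{2,3,4^{4k+2}\}$ (the latter are admissible and must be realized). Second, your requirement that \emph{every} auxiliary realization be $2$-growable is slightly stronger than what the paper manages: for the family $\{2,3,4^{d}\}$ with $d\equiv 0\pmod 4$ the paper could only find a $4$-growable (not $2$-growable) realization of $\{2,3,4^{8}\}$, so it handles $b=1$ with that and covers $b\geq 3$ separately via a $\{2,4\}$-growable realization of $\{2^3,3,4^4\}$. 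You should be prepared for the same workaround.
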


\begin{proof}
The first part of Table \ref{Tab0234odd} gives  $\{2,3,4\}$-growable cyclic realizations for $\{2, 3^c, 4^d\}$
in  each of the $12$ possibilities of congruence class combinations of $(c,d)\pmod{(3,4)}$. Note that $c+d\geq 6$.
Using Theorem~\ref{th:multigrow},  this proves $\BHR(L)$  except for the following cases:
$d=1,2,3$; $d=4,5$ and $c \equiv 2 \pmod{3}$; $c=1$ and $d\equiv 0,1 \pmod{4}$.
Next, we consider the case $c\geq 2$ and $1\leq d\leq 5$,
using  $\{2,3\}$-growable cyclic realizations for  $\{2, 3^c,4^d\}$.
For the exceptional case $\{2,3^2,4^4\}$ we use a $2$-growable cyclic realization.
Now we complete the case  $c\geq 2$: Table \ref{Tab0234odd}  also provides $2$-growable cyclic
realization for the multisets $\{2^3, 3^c,4^d\}$ when $(c,d)\in \{(2,2),(2,3), (3,1),(3,2),(4,1)\}$, and for the multiset $\{2^5, 3^2,4\}$.

Finally, we assume $c=1$. Note that the multiset $\{2,3,4^{4k+5}\}$ does not satisfy the necessary condition.
For the multisets $\{2,3,4^{4k+8} \}$ we use the $4$-growable realization
$[ 2, 6, 10, 3, 7, 4, 0, 9, 5, 1, 8 ]$ of $\{2,3,4^8\}$.
Table \ref{Tab0234odd} gives $\{2,4\}$-growable cyclic realizations for the multisets $\{2^3, 3,4^4\}$ and $\{2^3, 3,4^5\}$;
it also gives $2$-growable cyclic realizations for the multisets $\{2^3,3,4^3\}$, $\{2^5,3,4\}$ and $\{2^5,3,4^2\}$.
\end{proof}

\begin{table}[ht]
\caption{$\{2,3,4\}$-growable cyclic realizations for $\{2^b, 3^c,4^d\}$, with $b\geq 1$ odd:
they are $x$-growable at $m_x$. The congruence classes of $(c,d)$ are taken  modulo $(3,4)$.}\label{Tab0234odd}
\begin{center}
\begin{footnotesize}
\begin{tabular}{lllll}\hline
Classes & Realizations & $(b,c,d)$ & $(m_2,m_3,m_4)$ & Missing cases\\  \hline
$(0,0)$ & $[ 2, 6, 8, 5, 0, 4, 1, 7, 3 ]$ & $(1, 3, 4 )$ & $(1,2,3)$ \\
$(0,1)$ & $[ 2, 5, 1, 8, 4, 0, 6, 9, 7, 3 ]$ & $( 1, 3, 5)$ & $(1,2,4)$ & $d=1$ \\
$(0,2)$ & $[ 3, 6, 10, 7, 0, 9, 2, 5, 1, 8, 4 ]$ & $(1, 3, 6)$ & $(2,3,4)$ & $d=2$ \\
$(0,3)$ & $[ 6, 10, 2, 5, 9, 0, 8, 4, 1, 3, 11, 7 ]$ & $(1, 3, 7)$ & $(5,6,7)$ & $d=3$  \\
$(1,0)$ & $ [ 3, 6, 9, 5, 2, 8, 0, 4, 1, 7 ]$ & $( 1, 4, 4)$  & $(6,2,3)$ & $c=1$ \\
$(1,1)$ & $[ 2, 5, 9, 6, 3, 10, 8, 1, 4, 0, 7 ]$ & $(1, 4, 5)$ & $(6,7,3)$ & $c=1$ or $d=1$ \\
$(1,2)$ & $[ 3, 7, 2, 6, 0, 5, 1, 8, 4 ]$ & $(1, 1, 6)$ & $(2,3,4)$ & $d=2$\\
$(1,3)$ & $[ 2, 6, 0, 4, 8, 1, 5, 9, 7, 3 ]$ & $(1, 1, 7)$ & $(1,2,5)$ & $d=3$ \\
$(2,0)$ & $[ 10, 2, 6, 3, 11, 7, 5, 1, 9, 0, 8, 4 ]$ & $(1, 2, 8)$  & $(8,3,5)$ & $d=4$\\
$(2,1)$ & $[ 7, 11, 8, 12, 3, 5, 9, 0, 4, 1, 10, 6, 2 ]$ & $(1, 2, 9)$ &  $(6,7,3)$ & $d=1,5$ \\
$(2,2)$ & $[ 2, 6, 0, 3, 7, 9, 5, 1, 8, 4 ]$ & $(1, 2, 6)$ &  $(1,3,5)$ & $d=2$ \\
$(2,3)$ & $[ 9, 2, 5, 1, 8, 0, 7, 3, 10, 6, 4 ]$ & $(1, 2, 7)$ & $(7,3,4)$ & $d=3$ \\\hline
$(0,1)$ & $[ 3, 6, 0, 4, 1, 7, 5, 2, 8 ]$ & $( 1, 6, 1)$ & $(6,2,3)$  \\
$(0,2)$ & $[ 4, 7, 1, 5, 2, 9, 6, 3, 0, 8 ]$ & $(1, 6, 2)$ & $(7,3,4)$  \\
$(0,3)$ & $[ 0, 4, 1, 7, 3, 6, 2, 5 ]$ & $(1, 3, 3)$ &  $ (2,3,-)$ & \\
$(1,1)$ & $[ 3, 6, 9, 2, 5, 1, 8, 0, 7, 4 ]$ & $( 1, 7, 1)$ & $(7,3,4)$  \\
$(1,2)$ & $[ 7, 1, 4, 0, 5, 2, 6, 3 ]$ & $(1, 4, 2)$ & $(2,3,-)$ \\
$(1,3)$ & $[ 3, 6, 0, 4, 2, 7, 1, 5, 8 ]$ & $( 1, 4, 3)$ & $(6,2,3)$  \\
$(2,0)$ & $[ 6, 10, 2, 9, 1, 4, 8, 0, 3, 7, 5 ]$ & $(1, 5, 4)$ &  $(4,5,6)$ & $(c,d)=(2,4)$\\
$(2,1)$ & $[ 2, 6, 1, 3, 7, 4, 0, 5, 8 ]$ & $(1, 2, 5)$ & $(1,4,-)$  \\
        & $[ 1, 4, 7, 3, 6, 0, 5, 2 ]$  & $(1, 5, 1)$ & $(2,4,-)$  \\
$(2,2)$ & $[ 3, 6, 0, 4, 1, 8, 5, 2, 7 ]$ & $( 1, 5, 2)$ & $(6,2,3)$  \\
$(2,3)$ & $[ 9, 6, 2, 8, 1, 5, 3, 0, 7, 4 ]$ & $ ( 1, 5, 3)$ & $(7,3,4)$  \\\hline
$(2,0)$ & $[ 7, 3, 6, 2, 4, 0, 5, 1 ]$  & $(1,2,4)$ & $(3,-,-)$\\ \hline
$(0,1)$ & $[ 1, 6, 0, 2, 5, 3, 7, 4 ]$ & $(3, 3, 1)$ & $(3,4,-)$ \\
$(0,2)$ & $[ 7, 1, 4, 0, 2, 6, 8, 5, 3 ]$ & $( 3, 3, 2)$ & $(6,2,3)$ \\
$(1,1)$ & $[ 3, 6, 8, 2, 5, 7, 0, 4, 1 ]$ & $(3, 4, 1)$ & $(6,2,3)$\\
$(2,1)$ & $[ 5, 7, 3, 1, 8, 2, 0, 6, 4 ]$ & $( 5, 2, 1)$ & $(4,5,-)$\\
$(2,2)$ & $[ 2, 4, 1, 5, 7, 3, 0, 6 ]$ & $( 3, 2, 2)$ & $(1,2,-)$ \\
$(2,3)$ & $[ 3, 5, 7, 0, 4, 1, 6, 2, 8 ]$ & $(3, 2, 3)$ & $(6,2,3)$  \\ \hline
$(1,0)$ &  $[ 3, 7, 5, 0, 4, 1, 8, 6, 2 ]$ & $( 3, 1, 4) $ & $(1,2,3)$ \\
$(1,1)$ & $[ 5, 7, 1, 3, 0, 6, 2, 4 ]$ & $(5, 1, 1)$ & $(1,2,-)$ \\
$(1,2)$ & $[ 4, 6, 8, 1, 5, 2, 0, 7, 3 ]$ & $(5, 1, 2)$ & $(2,3,4)$ \\
$(1,3)$ & $[ 1, 7, 3, 5, 2, 6, 4, 0 ]$ & $( 3, 1, 3)$ & $(3,4,-)$ \\
$(1,4)$ & $[ 5, 9, 1, 7, 3, 0, 8, 2, 6, 4 ]$ & $( 3, 1, 5) $ & $(3,4,5)$\\
\hline
\end{tabular}
\end{footnotesize}
\end{center}
\end{table}

\begin{thm}\label{th1234}
Let $L=\{1^a,2^b, 3^c,4^d\}$ be an admissible multiset with $a,b,c,d\geq 0$.
Then $\BHR(L)$ holds.
\end{thm}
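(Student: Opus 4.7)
The plan is to combine the four lemmas of this section with previously established results via a case analysis on $a$ and the parity of $b$. First, if $c = 0$ or $d = 0$, then $U \subseteq \{1,2,4\}$ or $U \subseteq \{1,2,3\}$, and $\BHR(L)$ follows from Theorem~\ref{th:known}.1 (for $|U| \leq 2$), Theorem~\ref{th:known}.2 (for $U = \{1,2,4\}$), and Theorem~\ref{th:123} (for $U = \{1,2,3\}$). We may therefore assume $c, d \geq 1$. Theorem~\ref{th:known}.4 then disposes of all $a \geq 3$ and of $a = 2$ with $b \geq 1$.

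It remains to handle $a \leq 2$ with the residual constraints $a + b \leq 2$ or $a \leq 1$. Lemma~\ref{1^234} settles $a = 2$, $b = 0$. For $a = 1$, split on the parity of $b$: Lemma~\ref{134} covers $b$ even and Lemma~\ref{1234} covers $b$ odd. For $a = 0$ with $b = 0$, we again have $|U| = 2$ and Theorem~\ref{th:known}.1 applies, while for $a = 0$ with $b \geq 1$ odd we appeal to Lemma~\ref{234odd}.

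The outstanding subcase is $a = 0$ with $b \geq 2$ even, which I expect to be the main obstacle. The reason is that the $b = 1$ base realizations of $\{2, 3^c, 4^d\}$ in Lemma~\ref{234odd} are $\{2,3,4\}$-growable, but applying Theorem~\ref{th:multigrow} in the $x = 2$ direction adds only $2^2$ at a time, so the parity of $b$ is preserved and $b$ remains odd. I would close this gap in the same spirit as the preceding lemmas: produce a supplementary table of $\{2,3,4\}$-growable cyclic realizations of $\{2^2, 3^c, 4^d\}$, one representative for each of the $12$ congruence classes of $(c,d) \pmod{(3,4)}$. Applying Theorem~\ref{th:multigrow} to these base realizations will then cover every admissible $\{2^b, 3^c, 4^d\}$ with $b \geq 2$ even and $c, d$ sufficiently large.

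The finitely many remaining small multisets — those with small $c$, $d$, or $b$ that the growable process does not reach — are then dealt with by explicit cyclic realizations produced by the heuristic search described in Section~\ref{sec:intro}, supplemented if convenient by Theorem~\ref{th:known}.12 when $v \leq 19$ or $v = 23$. The hard part of this step is verifying that the set of small exceptions is in fact finite and fully checkable, exactly as was done in Lemmas~\ref{1^234}, \ref{134}, \ref{1234}, and \ref{234odd}. Once this residual table is in place, the desired case analysis closes and the theorem follows.
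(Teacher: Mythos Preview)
Your proposal is correct and follows essentially the same route as the paper: the case analysis on~$a$ and the parity of~$b$, the use of Lemmas~\ref{1^234}, \ref{134}, \ref{1234}, \ref{234odd}, and the plan for the remaining case $a=0$, $b\geq 2$ even via a table of $\{2,3,4\}$-growable realizations of $\{2^2,3^c,4^d\}$ covering the twelve residue classes of $(c,d)\pmod{(3,4)}$ all match the paper's argument exactly. The paper carries this last step out explicitly (its Table~\ref{Tab0234}), together with the auxiliary $\{2,3\}$- and $2$-growable realizations for the small leftover $(c,d)$ and the $b=4,6$ cases with $c+d\leq 4$, so your sketch would be completed precisely by supplying that data.
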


\begin{proof}
By Theorem~\ref{th:known}.2--4 we may assume $0\leq a\leq 2$ and $c,d\geq 1$.
If $a=2$ the result follows from Theorem~\ref{th:known}.4 and Lemma \ref{1^234}.
Suppose now $a=1$. If $b$ is even we apply Lemma \ref{134}, otherwise we apply Lemma \ref{1234}.
Finally, assume $a=0$. By \cite{HR09} we may assume $b\geq 1$. If $b$ is odd we apply Lemma \ref{234odd} and so,
we may also assume $b\geq 2$ is even.

We start with the $\{2,3,4\}$-growable cyclic realizations of $L=\{2^2, 3^c, 4^d\}$ for each of the $12$ possibilities of congruence class combinations of $(c,d)\pmod {(3,4)}$ described in the first part of Table \ref{Tab0234}.
Note that $c+d\geq 5$.
Using Theorem~\ref{th:multigrow},  this proves $\BHR(L)$  except for the following cases:
$d=1,2$; $d=3$ and $c\not \equiv 0\pmod{3}$; $d=4$ and $c\equiv 1 \pmod{3}$.
For these exceptions, Table \ref{Tab0234} also gives $\{2,3\}$-growable cyclic realizations
when $(c,d)\in \{(4,1), (5,1),(6,1),(3,2),  (4,2),(5,2), (2,3),  (4,3),(4,4) \}$ and a
$2$-growable  cyclic realization for  $\{ 2^2, 3, 4^4\}$.

We are left to the cases $c+d\leq 4$.
This table also provides a $2$-growable cyclic realization for the multiset
$\{2^4, 3^c,4^d\}$ when $(c,d)\in \{(1,2),(1,3),(2,1),(2,2),(3,1)  \}$,
and for the multiset $\{2^6, 3,4\}$.
\end{proof}

\begin{table}[ht]
\caption{$\{2,3,4\}$-growable cyclic realizations for $\{2^b, 3^c,4^d\}$, with $b\geq 2$ even:
they are $x$-growable at $m_x$. The congruence classes of $(c,d)$ are taken  modulo $(3,4)$.}\label{Tab0234}
\begin{center}
\begin{footnotesize}
\begin{tabular}{lllll}\hline
Classes & Realizations & $(b,c,d)$ & $(m_2,m_3,m_4)$ & Missing cases\\  \hline
$(0,0)$ & $[ 2, 5, 9, 6, 8, 0, 4, 1, 7, 3 ]$ & $( 2, 3, 4)$  & $(1,2,3)$ \\
$(0,1)$ & $[ 8, 0, 7, 3, 10, 1, 5, 2, 9, 6, 4 ]$ & $(2, 3, 5)$ & $(7,3,4)$ & $d=1$ \\
$(0,2)$ & $[ 8, 0, 4, 2, 10, 1, 5, 7, 11, 3, 6, 9 ]$ & $(2, 3, 6)$ & $(7,8,3)$ & $d=2$  \\
$(0,3)$ & $[ 3, 6, 1, 5, 8, 2, 4, 0, 7 ]$ & $(2, 3, 3)$ & $(6,2,3)$  \\
$(1,0)$ & $[ 4, 8, 0, 2, 6, 10, 1, 9, 5, 3, 11, 7 ]$ & $( 2, 1, 8 )$ & $(3,6,7)$ & $ d=4$ \\
$(1,1)$ & $[ 4, 8, 5, 1, 6, 2, 0, 7, 3 ]$ & $(2, 1, 5)$ & $(2,3,4)$  & $d=1$ \\
$(1,2)$ & $[ 4, 8, 0, 2, 6, 9, 5, 1, 7, 3 ]$ & $( 2, 1, 6)$ & $(2,3,5)$ & $d=2$  \\
$(1,3)$ & $[ 4, 8, 1, 5, 7, 0, 9, 2, 6, 3, 10 ]$ & $( 2, 1, 7)$ & $(8,3,5)$  & $d=3$ \\
$(2,0)$ & $[ 2, 6, 0, 3, 7, 5, 1, 8, 4 ]$ & $( 2, 2, 4)$ &  $(1,3,4)$ \\
$(2,1)$ & $[ 4, 7, 1, 5, 3, 9, 6, 2, 8, 0 ]$ & $(2, 2, 5)$ & $(7,3,4)$  & $d=1$\\
$(2,2)$ & $[ 5, 8, 10, 3, 7, 0, 4, 6, 2, 9, 1 ]$ & $(2, 2, 6)$ & $(8,4,5)$  & $d=2$ \\
$(2,3)$ & $[ 10, 2, 6, 8, 0, 9, 5, 1, 11, 3, 7, 4 ]$ & $( 2, 2, 7 )$ & $(9,3,6)$ & $d=3$  \\\hline
$(0,1)$ & $[ 4, 7, 9, 6, 3, 0, 8, 1, 5, 2 ]$ & $( 2, 6, 1)$ & $(7,3,4)$ \\
$(0,2)$ & $[ 2, 5, 7, 3, 6, 0, 4, 1 ]$ & $( 2, 3, 2 )$ & $(2,3,-)$ \\
$(1,0)$ & $[ 0, 3, 7, 10, 8, 1, 5, 2, 9, 6, 4 ]$ & $(2, 4, 4)$ & $(7,3,4)$ & $(c,d)=(1,4)$ \\
$(1,1)$ & $[ 0, 3, 1, 6, 2, 5, 7, 4 ]$ & $( 2, 4, 1 )$  & $(5,2,-)$ \\
$(1,2)$ & $[ 3, 6, 1, 4, 0, 7, 5, 2, 8 ]$ & $( 2, 4, 2 )$ & $(6,2,3)$ \\
$(1,3)$ & $[ 8, 1, 5, 2, 0, 7, 3, 9, 6, 4 ]$ & $( 2, 4, 3 )$ & $(7,3,4)$ \\
$(2,1)$ & $[ 7, 5, 2, 8, 1, 4, 0, 6, 3 ]$ & $(2, 5, 1 )$ & $(6,2,3)$ \\
$(2,2)$ & $[ 9, 1, 5, 2, 8, 6, 3, 0, 7, 4 ]$ & $( 2, 5, 2)$ & $(7,3,4)$ \\
$(2,3)$ & $[ 7, 3, 6, 4, 0, 2, 5, 1 ]$ & $(2, 2, 3)$ & $(3,4,-)$ \\ \hline
$(1,0)$ & $[ 0, 4, 2, 6, 1, 5, 3, 7 ]$ & $(2,1,4)$ & $(3,-,-)$\\\hline
$(0,1)$ & $[ 2, 5, 7, 0, 4, 1, 8, 6, 3 ]$ & $( 4, 3, 1)$ & $(6,2,3)$ \\
$(1,1)$ & $[7, 5, 2, 0, 4, 6, 8, 1, 3 ]$ & $( 6, 1, 1 )$ & $(6,2,-)$ \\
$(1,2)$ & $[ 7, 1, 3, 5, 2, 6, 4, 0 ]$ & $(4, 1, 2)$ & $(3,4,-)$ \\
$(1,3)$ & $[ 3, 7, 0, 4, 6, 8, 1, 5, 2 ]$ & $(4, 1, 3)$ & $(1,2,4)$ \\
$(2,1)$ & $[ 1, 3, 0, 6, 2, 4, 7, 5 ]$ & $( 4, 2, 1)$ & $(5,2,-)$ \\
$(2,2)$ & $[ 8, 6, 2, 0, 4, 1, 7, 5, 3 ]$ & $(4, 2, 2)$ & $(6,2,3)$ \\
\hline
\end{tabular}
\end{footnotesize}
\end{center}
\end{table}

The success in proving  these small cases leads us to make the following conjecture, which says that the method of the previous section and this one is always successful.

\begin{conj}\label{conj:grow}
For any fixed set~$U$, there is a finite set of growable realizations with underlying set~$U$ that implies the existence of realizations for all but finitely many admissible multisets~$L$ that have underlying set~$U$.
\end{conj}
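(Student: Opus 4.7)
The plan is to partition the admissible multisets with underlying set $U = \{x_1, \ldots, x_k\}$ into $\prod_{i=1}^{k} x_i$ congruence classes according to $(a_1, \ldots, a_k) \pmod{(x_1, \ldots, x_k)}$, where $a_i$ is the multiplicity of $x_i$. By Theorem~\ref{th:multigrow}, if we can exhibit for each class $(c_1, \ldots, c_k) \pmod{(x_1, \ldots, x_k)}$ a single $U$-growable realization of a multiset $L_{(c_1, \ldots, c_k)} = \{x_1^{c_1}, \ldots, x_k^{c_k}\}$ in that class, then every admissible multiset in the class with $a_i \geq c_i$ for all $i$ is realizable. The admissible multisets not covered in this way are those for which some $a_i < c_i$; these are finite in number, which is exactly what the conjecture permits. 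Reducing Conjecture~\ref{conj:grow} to the existence of these $\prod x_i$ seed realizations is the core of the argument.

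The natural way to produce the seeds is induction on $|U|$. For $|U|=1$ the BHR Conjecture is known (Theorem~\ref{th:known}.1), and one can often upgrade an existing realization to a $\{x\}$-growable one by choosing appropriate endpoints. For the inductive step, one would start with a $U'$-growable realization for $U' = U \setminus \{x_k\}$ and insert copies of $x_k$ in a controlled way that preserves growability for every element of $U$. Lemmas~\ref{lem:1grow} and~\ref{lem:evengrow} already give insertion mechanisms tailored to $1$ and $2$; a full proof would need analogues that append blocks of length $x$ for arbitrary $x \in U$ while maintaining $x'$-growability at some position for each $x' \in U$. Iterating such a construction across the $\prod x_i$ congruence patterns is what would close the argument.

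The hard part is controlling multi-growability simultaneously at all scales. The $x$-growable-at-$m$ condition is rigid: exactly one edge incident with each of the $x$ consecutive vertices $\{m-x+1, \ldots, m\}$ must be lengthened by the embedding, and no other edge may be. Demanding that this holds for every $x \in U$ at some chosen $m_x$ forces the realization to have a very particular structure, and the tables in Sections~\ref{sec:145} and~\ref{sec:1234} suggest that even for $|U| \leq 4$ the constructions rely heavily on ad hoc computer search rather than on a recognizable pattern. Any successful proof will, I expect, have to replace the one-realization-per-class enumeration by a canonical scaffold --- a parametric family of Hamiltonian paths that is provably $U$-growable and can be tuned to any target congruence pattern modulo $(x_1, \ldots, x_k)$.

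A plausible shape for such a scaffold is a path built by concatenating standard zigzag blocks of lengths drawn from $U$, in which each ``landing'' of the path into the interval $\{m_x - x+1, \ldots, m_x\}$ is designed so that the incident edge on the far side has the correct growth behavior. The main obstacle I foresee is proving, uniformly in $U$, that one can simultaneously realize the desired multiplicities, meet the divisor condition at the boundary cases, and arrange the positions $m_x$ so that the growability windows for distinct $x \in U$ do not interfere with one another. Overcoming this interference for general $U$ --- rather than verifying it by hand for each small case as in the present paper --- is, to my mind, the essential content of the conjecture.
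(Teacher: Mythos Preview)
The statement is Conjecture~\ref{conj:grow}, which the paper explicitly leaves open; there is no proof in the paper to compare against. Your write-up is accordingly a strategy sketch together with a discussion of obstacles rather than a proof, and you yourself flag the central difficulty as unresolved.

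There is, however, a genuine error in your first paragraph. You claim that one $U$-growable seed per congruence class modulo $(x_1,\ldots,x_k)$ leaves only finitely many admissible multisets uncovered. This is false once $|U|\ge 2$: the multisets missed are those with $a_i<c_i$ for at least one index~$i$, and while that coordinate is then bounded, the remaining coordinates are unconstrained, so infinitely many multisets remain. This is exactly why the tables in Sections~\ref{sec:145} and~\ref{sec:1234} supplement the primary $U$-growable seeds with additional realizations that are growable only in proper subsets of~$U$ (for instance $\{1,2\}$-growable, then merely $1$-growable) in order to cover the infinite boundary strips where some multiplicity is small. A correct formulation of the reduction therefore requires a nested family of seeds---for each nonempty $U'\subseteq U$, seeds growable in~$U'$ handling the cases where the multiplicities of the elements outside~$U'$ are fixed at small values---not merely $\prod_i x_i$ seeds. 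Your later remarks on the difficulty of arranging growability simultaneously at all scales are apt, but the reduction itself needs this repair before one can even state precisely what must be constructed.
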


If Conjecture~\ref{conj:grow} is true, then the BHR Conjecture for any given underlying set can be proved with a finite set of realizations.

\section{A Partial Solution for $U = \{ 1, x, 2x \}$}\label{sec:1_x_2x}

In previous sections we have seen how it is possible to completely prove the BHR Conjecture for a fixed~$U$ by the construction of one or more base case realizations for each of $\prod_{x \in U} x$ cases.   In this section we develop ways to produce  general results with fewer base cases. 

Our main goal is Theorem~\ref{th:1_x_2x}, which says that $\BHR(L)$ holds for $L = \{ 1^a, x^b, (2x)^c \}$ when $a \geq x-2$, $c$ is even, and $b \geq 5x-2+c/2$.  When~$x$ is even, this covers many instances not covered by Theorem~\ref{th:known}.8.  When~$x$ is odd, the instances covered are all new.

\begin{lem}\label{lem:x_2x}
Suppose $L$ has an $X$-growable realization and take $x \in X$.  Take~$i$ with $1 \leq i \leq x$. Then $L \cup \{ x^{3x-2i}, (2x)^{2i}  \}$ has an  $X$-growable realization.
\end{lem}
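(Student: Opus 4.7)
My plan is to embed $K_v$ into $K_{v+3x}$ via the three-fold shift $y \mapsto y$ for $y \leq m$ and $y \mapsto y + 3x$ for $y > m$, where $m$ is a position at which the given realization $\bm g$ is $x$-growable, and then fill in the $3x$ new vertices $\{m+1, \ldots, m+3x\}$ by one of two insertion patterns at each of the $x$ ``pair'' locations. By definition of $x$-growability at $m$, these pairs are indexed by $y_j \in \{m-x+1,\ldots,m\}$, each joined in $\bm g$ to a unique $z_j > m$, and the triples $\{y_j+x,\, y_j+2x,\, y_j+3x\}$ partition $\{m+1,\ldots,m+3x\}$ as $j$ varies.

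Choose $i$ of the $x$ pairs to be \emph{twisted}. After embedding, each pair edge becomes $(y_j,\,z_j+3x)$, and I replace it either by the \emph{standard} subsequence
\[
(y_j,\, y_j+x,\, y_j+2x,\, y_j+3x,\, z_j+3x)
\]
or by the \emph{twisted} subsequence
\[
(y_j,\, y_j+2x,\, y_j+x,\, y_j+3x,\, z_j+3x).
\]
A direct computation, which also handles the wraparound subcase exactly as in the proof of Theorem~\ref{th:grow}, shows that the edge $(y_j+3x,\, z_j+3x)$ recovers the original length $\ell(y_j, z_j)$, that a standard insertion contributes three new edges of length~$x$, and that a twisted insertion contributes two of length $2x$ and one of length~$x$ (here one needs $2x\le v/2$ in $K_{v+3x}$, which follows from $x\le v/2$). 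Summing over the $x$ pairs gives $3(x-i)+i = 3x-2i$ new edges of length $x$ and $2i$ new edges of length $2x$; since each new vertex appears exactly once, the result is a Hamiltonian path of $K_{v+3x}$ realizing $L \cup \{x^{3x-2i},(2x)^{2i}\}$.

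It remains to show that this new realization is $X$-growable. For the direction $x$ itself, I would verify directly that $x$-growability at $m$ persists: each $y_j$ has as its two neighbors in the new path its original non-pair neighbor, whose edge length is unaltered by a further $x$-embedding at $m$ (by the growability hypothesis on $\bm g$ and the wraparound analysis from Theorem~\ref{th:grow}), together with a single new neighbor in $\{m+1,\ldots,m+3x\}$, namely $y_j+x$ in the standard case and $y_j+2x$ in the twisted case; in both cases this new edge is the unique edge at $y_j$ whose length strictly increases under the further embedding. For each $x' \in X$ with $x' \neq x$, a case analysis parallel to the one in Theorem~\ref{th:grow} shows that $x'$-growability at $m_{x'}$ (or at $m_{x'}+3x$ if $m_{x'}>m$) is preserved, since all altered edges lie either strictly below $m+1$ or strictly above $m$.

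The main obstacle is that the twisted insertion introduces length-$2x$ edges that do not appear in any single application of Theorem~\ref{th:grow}, so the $X$-growability conclusion cannot be invoked as a black box. The key point to exploit is that both $y_j+x$ and $y_j+2x$ lie in the same new-vertex block $\{m+1,\ldots,m+3x\}$, so every twisted edge is confined to that block and therefore cannot disturb growability at any $m_{x'}$ with $m_{x'}\le m$ or $m_{x'}\ge m+3x$. Combining this confinement with the direct verification at $m$ given above yields the desired $X$-growable realization.
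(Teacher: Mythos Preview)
Your proposal is correct and follows essentially the same idea as the paper: insert three new vertices $y_j+x,\,y_j+2x,\,y_j+3x$ at each of the $x$ pair locations, using either the straight order (contributing $\{x^3\}$) or the middle-swapped order (contributing $\{x,(2x)^2\}$), and observe that the swap is confined to the inserted block so growability is undisturbed. The only difference is presentational: the paper obtains the ``straight'' realization by invoking Theorem~\ref{th:grow} three times as a black box and then performs the swap, which lets it inherit $X$-growability from that theorem and makes the argument shorter, whereas you carry out the $3x$-shift embedding and the growability verification by hand.
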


\begin{proof}
Apply Theorem~\ref{th:grow} three times to the $x$-growable realization of~$L$ to obtain an $X$-growable realization of $L \cup \{x^{3x} \}$ with each of the subsequences
\begin{flushleft}
$  [m, m+x, m+2x, m+3x], 
  [m-1, m-1+x, m-1+2x, m-1+3x], 
 \ldots, $
\end{flushleft} 
\begin{flushright}
$ [m-x+1, m+1, m+1+x, m+1+2x] $
\end{flushright}
appearing, possibly reversed.
Each subsequence has differences $\{ x^3 \}$.  Take~$i$ of the subsequences and in each switch the middle two elements (so, for example, the first would become $[m, m+2x, m+x, m+3x]$).  Each time we perform this operation we obtain a subsequence with differences $\{ x, (2x)^2 \}$ instead of  $\{ x^3 \}$.  After performing it $i$ times the new differences are $\{ x^{3x-2i}, (2x)^{2i}  \}$.

These operations do not interfere with growability: if the original realization is $y$-growable at~$m'$, then the new realization is $y$-growable at $m'$ if $m' \leq m$ and at $m' + 3x$ otherwise.
\end{proof}

Let  $L = \{ 1^a, x^b, (2x)^c \}$.
When $x=1$ or 2 $\BHR(L)$ follows from Theorem~\ref{th:known}.1 or~\ref{th:known}.2 respectively, so $x=3$  is the first open case.  We treat the $x=3$ case first both as an illustration of the general method and because some of the later constructions require $x > 3$.

\begin{lem}\label{lem:136}
Let $L = \{ 1^a, 3^b, 6^c \}$. If $c$ is even and $b \geq 13 + c/2$, then BHR($L$) holds; if $c$ is odd and  $b \geq 18 + (c-1)/2$, then BHR($L$) holds.
\end{lem}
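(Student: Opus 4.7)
The plan is to apply Theorem~\ref{th:grow} with $x \in \{1, 3, 6\}$ together with Lemma~\ref{lem:x_2x} specialized to $x = 3$ (which supplies the operations $i = 1, 2, 3$) to propagate a small family of $\{1, 3, 6\}$-growable base realizations. Each growth step adds to $(\Delta b, \Delta c)$ one of $(3, 0)$, $(0, 6)$, $(7, 2)$, $(5, 4)$ (the $i = 3$ case $(3, 6)$ is redundant with $(3, 0) + (0, 6)$), plus $\Delta a = 1$ for each application with $x = 1$. Modulo $(3, 6)$ these vectors lie in the index-three subgroup $H = \langle (1, 2) \rangle = \{(0, 0), (1, 2), (2, 4)\}$ of $\mathbb{Z}_3 \times \mathbb{Z}_6$, so the residues $(b, c) \bmod (3, 6)$ reachable from a base $\{1^{a_0}, 3^{b_0}, 6^{c_0}\}$ form the coset $(b_0, c_0) + H$. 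There are six such cosets, and they partition cleanly into three of even-$c$ residues and three of odd-$c$ residues, matching the two cases of the lemma.

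The core of the proof is then to exhibit six $\{1, 3, 6\}$-growable cyclic realizations, one per coset, presented in tables (produced by the GAP heuristic of Section~\ref{sec:intro}) together with growability witnesses $m_1, m_3, m_6$. For the three even-$c$ cosets the bases would be chosen with $(a_0, b_0, c_0)$ small enough that applying the operations above---potentially including one $(7, 2)$-step and/or one $(5, 4)$-step to move to the needed class within the coset---yields every admissible $\{1^a, 3^b, 6^c\}$ with $b \geq 13 + c/2$. A case analysis on $c \bmod 6$ shows that the binding constraint is $b_0 + 7 \leq 13 + (c_0 + 2)/2$, i.e.\ $b_0 \leq 7 + c_0/2$, arising from targets lying in the $(7, 2)$-translate of the base class. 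For the three odd-$c$ cosets the base realizations require $c_0 \geq 1$ and have slightly larger $b_0$, which accounts for the looser bound $b \geq 18 + (c-1)/2$.

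To finish, given any target $\{1^a, 3^b, 6^c\}$ satisfying the hypothesis, I would compute $(b \bmod 3, c \bmod 6)$, locate the appropriate coset, pick $q, r \in \{0, 1\}$ so that $q$ copies of $\{3^7, 6^2\}$ and $r$ copies of $\{3^5, 6^4\}$ shift the base's class to the target's, and check that the bound makes $(b - b_0 - 7q - 5r)/3$ and $(c - c_0 - 2q - 4r)/6$ nonnegative integers; the remaining growth by $\{3^3\}$, $\{6^6\}$, and $\{1^1\}$ then produces the desired realization. The main obstacle is producing the six base realizations themselves: simultaneous $1$-, $3$-, and $6$-growability is a threefold structural constraint on the Hamiltonian path, but well within the scope of the paper's computer search.
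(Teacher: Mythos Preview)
Your route differs substantially from the paper's. You ask for $\{1,3,6\}$-growable bases and plan to grow the number of sixes via Theorem~\ref{th:grow} with $x=6$; the paper works entirely with $\{1,3\}$-growable bases and manufactures all the sixes through Lemma~\ref{lem:x_2x}. In particular the $i=3$ move, which you discard as ``redundant with $(3,0)+(0,6)$'', is the paper's workhorse: each application adds $\{3^3,6^6\}$, and the threes this costs over the course of the construction are exactly the source of the $+c/2$ term in the bound. The paper's six bases have $c_0\in\{0,1\}$ and live at $v\in\{7,8,13,14\}$; $6$-growability is never invoked.

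Your proposal also has a gap. Six-growability forces $v\ge12$, so with $a_0=1$ (needed whenever $b+c\not\equiv1\pmod3$, since the lemma must cover $a=1$) you must take $b_0+c_0\ge10$. Your bookkeeping tracks only the $b$-direction via $b_0\le7+c_0/2$, but the $(7,2)$ and $(5,4)$ moves also impose a \emph{floor} on $c$: from a base the $(5,4)$-translate class is reachable only for $c\ge c_0+4$, so smaller positive $c$ in that class are missed. For the coset $\{(0,0),(1,2),(2,4)\}\subset\mathbb{Z}_3\times\mathbb{Z}_6$, where $a_0=1$ is required, this interacts badly with the congruence restrictions on $b_0$: for instance, no base satisfying your stated constraint can reach the admissible target $\{1,3^{16},6^2\}$. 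One can salvage things by exploiting the mod-$3$ rounding of the minimal target $b$ (e.g.\ a base at $(a_0,b_0,c_0)=(1,10,2)$ just barely works, though your constraint rules it out), and if pushed through your method would actually yield a bound on $b$ independent of $c$---stronger than the lemma. But the analysis you sketch does not establish this, and the paper sidesteps the whole issue by never needing $6$-growability.
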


\begin{proof}
By Theorem~\ref{th:known}.1 we may assume that $a,b,c \geq 1$.  The multiset~$L$ is not admissible when $a=1$ and $b+c \equiv 1 \pmod{3}$.

Table~\ref{tab:136} gives $\{1,3\}$-growable realizations for 
$$(a,b,c) \in \{ (2,4,0), (1,5,0), (1,6,0), (1,10,1), (1,11,1), (2,9,1) \}. $$  
First we use Lemma~\ref{lem:x_2x} along with the realizations of Table~\ref{tab:136} to obtain a $\{1,3\}$-growable realization of 
$L'=\{1^{a'}, 3^{b'}, 6^{c'} \}$ with $a' \in \{1,2\}$, $b' \equiv b \pmod{3}$ and $c' \leq 5$ such that $c' \equiv c \pmod{6}$. 
We have to distinguish cases according
to the congruence class of $c$ modulo $6$.

\begin{table}[tp]
\caption{$\{1,3\}$-growable realizations of $\{ 1^a , 3^b, 6^c \}$.
Where they are $1$- or $3$-growable is indicated by $(m_1, m_3 )$.  }\label{tab:136}
$$
\begin{array}{llll}
\hline
\text{name} & \text{realization} & (a,b,c) & (m_1, m_3) \\
\hline
{\bm g_1} & [ 6, 5, 1, 4, 0, 3, 2 ] & (2,4,0) & (4,2) \\
{\bm g_2} & [ 3, 0, 6, 2, 5, 1, 4 ] & (1,5,0) & (5,2) \\
{\bm g_3} & [ 5, 2, 7, 0, 3, 6, 1, 4 ] & (1,6,0) & (6,2) \\

{\bm g_4} & [ 2, 12, 9, 6, 3, 0, 10, 7, 4, 5, 8, 1, 11 ] & (1,10,1) & (3,5) \\
{\bm g_5} & [ 5, 2, 13, 10, 7, 8, 11, 0, 3, 6, 9, 12, 4, 1 ] & (1,11,1) & (6,8) \\
{\bm g_6} & [ 9, 6, 3, 0, 10, 7, 8, 11, 1, 4, 5, 12, 2 ] & (2, 9,1) & (6,8) \\

\hline
\end{array}
$$

\end{table}

When~$c$ is even, we start with  ${\bm g_1}$, ${\bm g_2}$ or  ${\bm g_3}$.
If $c \equiv 0 \pmod{6}$, then start  by taking ${\bm g_1}$, ${\bm g_2}$ or ${\bm g_3}$ according to whether $b$ is congruent to 1, 2 or $0 \pmod{3}$ respectively.
If $c \equiv 2 \pmod{6}$, then start by taking ${\bm g_1}$, ${\bm g_2}$ or ${\bm g_3}$ according to whether $b$ is congruent to 2, 0 or $1 \pmod{3}$ respectively and apply Lemma~\ref{lem:x_2x} with $i=1$. 
If $c \equiv 4 \pmod{6}$, then start  by taking ${\bm g_1}$, ${\bm g_2}$ or ${\bm g_3}$ according to whether $b$ is congruent to 0, 1 or $2 \pmod{3}$ respectively and apply Lemma~\ref{lem:x_2x} with $i=2$.  

When~$c$ is odd, we start with  ${\bm g_4}$, ${\bm g_5}$ or  ${\bm g_6}$.
If $c \equiv 1 \pmod{6}$, then start  by taking ${\bm g_4}$, ${\bm g_5}$ or ${\bm g_6}$ according to whether $b$ is congruent to 1, 2 or $0 \pmod{3}$ respectively.
If $c \equiv 3 \pmod{6}$, then start  by taking ${\bm g_4}$, ${\bm g_5}$ or ${\bm g_6}$ according to whether $b$ is congruent to 2, 0 or $1 \pmod{3}$ respectively and apply Lemma~\ref{lem:x_2x} with $i=1$.
If $c \equiv 5 \pmod{6}$, then start  by taking ${\bm g_4}$, ${\bm g_5}$ or ${\bm g_6}$ according to whether $b$ is congruent to 0, 1 or $2 \pmod{3}$ respectively and apply Lemma~\ref{lem:x_2x} with $i=2$.  

In each case we obtain the required realization of $L'$.
Next, apply Lemma~\ref{lem:x_2x} $(c-c')/6$ times with $x=i=3$ to obtain a $\{1,3\}$-growable realization of $\{1^{a'}, 3^{b' + (c-c')/2}, 6^c \}$.
Finally, complete to the required realization using $a-a'$ applications of Theorem~\ref{th:grow} with $x=1$ and $\frac{b - b'}{3} -\frac{ c-c'}{6}$ applications with $x=3$.

When~$c$ is even, the method requires up to six 3's in the ${\bm g_i}$, up to seven 3's to adjust the congruency class of the number of 6's, and then $c/2$ 3's to obtain the correct number of 6's.  Hence it always works for $b \geq 6+7+c/2 = 13 + c/2$.  When $c$ is odd, the method requires up to eleven 3's in the ${\bm g_i}$, up to seven 3's to adjust the congruency class of the number of 6's, and then up to $(c-1)/2$ 3's to obtain the correct number of 6's.  Hence it always works for $b \geq 11+7+(c-1)/2 = 18+ (c-1)/2$.
\end{proof}

\begin{exa}
  Let $L=\{1^3,3^{18},6^{10}\}$. Since $b\equiv 0 \pmod 3$ and  $c\equiv 4 \pmod {6}$, we start applying Lemma~\ref{lem:x_2x} with $i=2$ to ${\bm g_1}$.
  In this way we obtain the realization  
  $$[15,14,1,\textrm{7},\textrm{4},10,13,0,\textrm{6},\textrm{3},9,12,11,8,5,2]$$
 of the multiset $\{1^2,3^9,6^4\}$.
  Now we apply Lemma~\ref{lem:x_2x} once with $i=x=3$ to this new multiset and we get a realization of $\{1^2,3^{12},6^{10}\}$:
  $$[24,23,1,7,4,10,\textrm{16},\textrm{13},19,22,0,6,3,9,\textrm{15},\textrm{12},18,21,20,17,14,11,\textrm{5},\textrm{8},2].$$
 We now apply Theorem~\ref{th:grow} twice with $x=3$ to get a realization of $\{1^2,3^{18},6^{10}\}$ and then once with $x=1$ to get a realization of $L$:
\begin{flushleft}
$[31,30,1,4,7,13,10,16,22,19,25,28,29,0,3,6,12,$
\end{flushleft}
\begin{flushright}
$9,15,21,18,24,27,26,23,20,17,11,14,8,5,2].$
\end{flushright}

\end{exa}

To use the method of proof of Lemma~\ref{lem:136} for $x>3$ we require $\{1,x\}$-growable realizations for $\{1^a, x^b \}$ for $a$ as small as possible and for a $b$ in each congruence class modulo~$x$.  Lemmas~\ref{lem:1x_misc},~\ref{lem:1x_xeven} and~\ref{lem:1x_xodd} provide these.  Each of the constructions has at least one subsequence consisting of multiple instances of pairs $[t, t+x]$, triples $[t, t+x, t+2x]$ or their reverses; we indicate these pairs and triples with underbraces to help illuminate the overall structure.

\begin{lem}\label{lem:1x_misc}
Let $x \geq 4$.
The multisets $\{1^{x-1}, x^{x+1} \}$, $\{1^{x-2}, x^{x+2} \}$ and $\{1^{x-2}, x^{2x} \}$ have $\{1,x\}$-growable realizations.
\end{lem}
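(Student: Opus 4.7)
The plan is to exhibit, for each of the three multisets, an explicit $\{1,x\}$-growable realization on the correct number of vertices, chosen so that growability holds at $m_1 = 0$ and $m_x = x$. All three constructions will share a common scaffold: an initial block anchored at $0$, $x$, $2x$, followed by length-$x$ ``pairs" or ``triples" of consecutive vertices bridged by length-$1$ transition edges (these being exactly the pairs and triples of the form $[t, t+x]$ or $[t, t+x, t+2x]$ alluded to in the statement).

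For $\{1^{x-1}, x^{x+1}\}$ on $v = 2x+1$, I would use the sequence $[0, x, 2x, 2x-1, x-1, x-2, 2x-2, 2x-3, x-3, \ldots]$, consisting of the initial triple $[0,x,2x]$ followed, for $i = 1, 2, \ldots, x-1$, by the pair $(2x-i, x-i)$ or its reverse, with direction alternating by the parity of $i$. For $\{1^{x-2}, x^{x+2}\}$ on $v = 2x+1$, I would prepend $[1, x+1]$ to the preceding realization; the new edge $(x+1, 0)$ is a wraparound edge of length $x$, since its absolute difference equals $x+1 = v - x$. For $\{1^{x-2}, x^{2x}\}$ on $v = 3x-1$, I would start with the initial block $[x-1, 2x-1, 0, x, 2x]$ (which contains the wraparound edge $(2x-1, 0)$ of length $x$) and, for $i = 1, 2, \ldots, x-2$, append a triple $[2x+i, x+i, i]$ or its reverse, separated from the preceding block by a single length-$1$ edge. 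In each case a direct inspection confirms that the sequence visits each vertex of $\{0, 1, \ldots, v-1\}$ exactly once and realizes the advertised multiset of edge lengths.

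To verify $1$-growability at $m_1 = 0$, I would check that vertex $0$'s unique non-wraparound neighbor is $x$, and that the edge $(0, x)$ has its length increase from $x$ to $x+1$ under the embedding $K_v \hookrightarrow K_{v+1}$. Any wraparound edge incident with $0$ (present only in constructions (2) and (3)) preserves its length because the wraparound direction remains the shorter one in $K_{v+1}$. Every other edge has both endpoints shifted by $1$ in the embedding, so its absolute difference is preserved; and since the only wraparound edges in these constructions are incident with $0$, no such edge's length changes.

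To verify $x$-growability at $m_x = x$, I would check that each boundary vertex in $\{1, 2, \ldots, x\}$ has exactly one length-$x$ neighbor in $\{x+1, \ldots, v-1\}$; these crossing edges strictly increase in length under $K_v \hookrightarrow K_{v+x}$. The wraparound edges (always incident with vertex $0$, never with a boundary vertex) again preserve their length, and edges entirely on one side of $m = x$ have absolute difference in $\{1, x\}$, both of which are preserved under the shift. The main obstacle is careful bookkeeping for the wraparound edges in constructions (2) and (3); once the arithmetic identity that a wraparound edge of length $x$ (absolute difference $v-x$) remains of length $x$ under both the $1$-embedding and the $x$-embedding is checked, all growability conditions follow by enumeration of the few edge types in each construction.
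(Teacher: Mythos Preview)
Your first and third constructions are correct and, in fact, your first is more uniform than the paper's (which splits into separate cases for $x$ even and $x$ odd). The growability checks at $m_1=0$ and $m_x=x$ go through for those two as you describe.

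However, your second construction does not work as written. You propose to prepend $[1,\,x+1]$ to the Hamiltonian path from construction~(1), but that path already visits every vertex of $\{0,1,\ldots,2x\}$; in particular it already contains $1$ and $x+1$, so the result repeats vertices and is not a path. The arithmetic also does not match: starting from a realization of $\{1^{x-1},x^{x+1}\}$ and adding two new edges of length~$x$ would yield $\{1^{x-1},x^{x+3}\}$ on $2x+3$ vertices, not $\{1^{x-2},x^{x+2}\}$ on $2x+1$. To get from $\{1^{x-1},x^{x+1}\}$ to $\{1^{x-2},x^{x+2}\}$ on the \emph{same} vertex set you must \emph{replace} a length-$1$ edge by a length-$x$ edge, which requires a genuine rearrangement of the path rather than a prefix. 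The paper handles this case with separate explicit sequences (again split by parity of~$x$), built around a short exceptional segment inserted into the pair pattern; you will need something of that flavor here.
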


\begin{proof}
First, we cover $\{1^{x-1}, x^{x+1} \}$, in which case $v = 2x+1$.  When~$x$ is even, the sequence
$$[1,x+1, 0, 2x, x, \ub{x-1, 2x-1}, \ub{2x-2,x-2}, \ub{x-3,2x-3}, \ldots, \ub{x+2, 2}]$$
has edge-lengths
$[ x , x, 1, x, 1, x, \ldots, 1, x ]$
and so realizes $\{1^{x-1}, x^{x+1} \}$.  It is 1-growable at~1 and $x$-growable at~$x$.
When $x$ is odd, the sequence
$$ [ x, x+1, 1, 0, 2x, x-1, 2x-1, \ub{x-2, 2x-2}, \ub{2x-3, x-3}, \ub{x-4, 2x-4}, \ldots, \ub{x+2, 2} ] $$
has edge-lengths
$ [ 1, x, 1, 1, x, x, x, x,1, x, \ldots, 1,x  ] $
and so realizes $\{1^{x-1}, x^{x+1} \}$.  It is 1-growable at~$2x-1$ and $x$-growable at~$x-1$.

Next, consider $\{ 1^{x-2}, x^{x+2} \}$ and so $v = 2x+1$.  When~$x$ is even, the sequence
$$[ x, 2x, 0, x+1, 1, x+2, 2,  \ub{3,x+3}, \ub{x+4,4}, \ub{5,x+5}, \ldots, \ub{x-1, 2x-1} ]$$
has edge-lengths
$[ x, 1, x, x, x, x, 1,x,1,x,\ldots, 1,x]$
and so realizes $\{ 1^{x-2}, x^{x+2} \}$.  It is 1-growable at~1 and $x$-growable at~$x$.
When~$x$ is odd, the sequence
$$[0, x, x-1, 2x, 2x-1, x-2, 2x-2, \ub{x-3, 2x-3}, \ub{2x-4, x-4}, \ub{x-5, 2x-5}, \ldots, \ub{x+1, 1}]$$
has edge-lengths
$[ x, 1, x, 1, x, x,x,x, 1, x, 1, x, \ldots, 1, x]$
so this realizes $\{ 1^{x-2}, x^{x+2} \}$.
It is 1-growable at $2x-2$ and $x$-growable at $x-1$. 

Finally, consider $ \{1^{x-2}, x^{2x} \}$ and so $v = 3x-1$.  When~$x$ is even, the sequence
\begin{flushleft}
$[ \ub{0,x,2x}, \ub{2x+1,x+1,1}, \ub{2,x+2,2x+2}, \ldots, \ub{x-4,2x-4,3x-4},$
\end{flushleft}
\begin{flushright}
$ x-3, 2x-3, 2x-2, x-2, 3x-3, 3x-2,x-1, 2x-1]$
\end{flushright}
has edge-lengths
$[ x,x,1,x,x,1, \ldots, 1, x,x,  x,x,1,  x,x,1,x,x]$
and so realizes $\{1^{x-2},  x^{2x} \}$.  It is 1-growable at~$3x-3$ and $x$-growable at~$x-1$.
When $x$ is odd the sequence
\begin{flushleft}
$ [3x-2, x-1, 2x-1, \underbrace{2x, x, 0}, \underbrace{1, x+1, 2x+1}, \underbrace{2x+2, x+2, 2}, \ldots, $
\end{flushleft}
\begin{flushright}
$ \underbrace{x-4, 2x-4, 3x-4}, x-3, 2x-3, 2x-2, x-2, 3x-3]$
\end{flushright}
has edge-lengths
$[ x, x, 1, x, x, 1, x, x, \ldots, 1, x, x , x, x, 1, x, x  ]$
and so realizes $\{ 1^{x-2}, x^{2x} \}$.  It is 1-growable at $3x-4$ and $x$-growable at $x$. 
\end{proof}

\begin{exa}\label{ex:1x_misc}
Let $x=8$.  Lemma~\ref{lem:1x_misc} gives the $\{1,8\}$-growable realizations
$$ [1,9,0,16,8,7,15,14,6,5,13,12,4,3,11,10,2 ] , $$
\vspace{-7mm}
$$ [ 8,16,0,9,1,10,2,3,11,12,4,5,13,14,6,7,15  ] , $$
$$ [ 0,8,16,17,9,1,2,10,18,19,11,3,4,12,20,5,13,14,6,21,22,7,15] $$
of $\{1^7,8^9\}$, $\{1^6,8^{10}\}$ and $\{1^6, 8^{16}\}$ respectively.

Let $x=9$.  Lemma~\ref{lem:1x_misc} gives the $\{1,9\}$-growable realizations
$$[  9, 10, 1, 0, 18, 8, 17, 7, 16, 15, 6, 5, 14, 13, 4, 3, 12, 11, 2   ], $$
\vspace{-7mm}
$$[ 0,9,8,18,17,7,16,6,15,14,5,4,13,12,3,2,11,10,1 ], $$
$$[ 25, 8, 17, 18, 9, 0, 1, 10, 19, 20, 11, 2, 3, 12, 21, 22, 13, 4, 5, 14, 23, 6, 15, 16, 7, 24 ]$$
of $\{1^8,9^{10}\}$, $\{1^7,9^{11}\}$ and $\{1^7, 9^{18}\}$ respectively.
\end{exa}

\begin{lem}\label{lem:1x_xeven}
Let $x \geq 4$ be even.  There is a $\{1,x\}$-growable realization for $\{1^{x-2}, x^b \}$ for~$b$ in range $x+3 \leq b \leq 2x-1$.
\end{lem}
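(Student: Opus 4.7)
The plan is to give an explicit cyclic Hamiltonian path on $\{0,1,\ldots,v-1\}$, where $v=x-1+b$, that realizes $\{1^{x-2},x^b\}$ for each even $x\geq 4$ and each $b\in\{x+3,\ldots,2x-1\}$, and then to read off valid values of $m_1$ and $m_x$. Setting $k=b-(x+2)\in\{1,\ldots,x-3\}$ gives $v=2x+k+1$, so compared with the $b=x+2$ realization of Lemma~\ref{lem:1x_misc} there are exactly $k$ extra vertices, namely $2x+1,\ldots,2x+k$, to incorporate, and exactly $k$ extra length-$x$ edges to produce.

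The construction I propose interpolates between the two extremes already handled in Lemma~\ref{lem:1x_misc}. I would open with (a slight variant of) the seven-vertex head $[x,2x,0,x+1,1,x+2,2]$ of the $b=x+2$ case, then insert a middle block consisting of $k$ length-$x$ triples of the form $[j,x+j,2x+j]$ or $[2x+j,x+j,j]$, whose orientations alternate and which together consume the new vertices $2x+1,\ldots,2x+k$ alongside their matching columns-A and B partners, and finally attach a tail of $(x-3)-k$ pairs $[t,t+x]$ or $[t+x,t]$ exactly as in the $b=x+2$ case. Consecutive triples and pairs would be joined by length-$1$ transitions at the top or bottom of the columns, just as in the sister constructions of Lemma~\ref{lem:1x_misc}. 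A direct edge count would then show that this pattern contributes precisely $x-2$ length-$1$ edges (one inside the head, $k$ between the blocks of the middle section, and $(x-3)-k-1$ inside the tail, plus the two block-to-block joins) against the required $b=x+2+k$ length-$x$ edges, provided the bookkeeping closes.

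The hard part will be exactly that bookkeeping. The head, middle starting index, block orientations, and tail must be chosen simultaneously so that (a) every vertex of $\{0,\ldots,v-1\}$ appears exactly once, (b) the length multiset is $\{1^{x-2},x^b\}$, and (c) the final sequence admits valid choices of $m_1$ and $m_x$. I expect a clean solution to split into two subcases according to the parity of $k$, since the parity of $k$ determines whether the middle block ends at the top or the bottom of its last triple and hence how it dovetails into the first pair of the tail, and possibly a handful of isolated edge cases when $k$ is close to $1$ or to $x-3$ where the middle collapses into or merges with the head/tail. Once the explicit formulas are pinned down, I would expect $x$-growability to hold at an $m_x$ placed at the end of the head (so that the next $x$ vertices $m_x-x+1,\ldots,m_x$ are precisely the ones that cross the boundary exactly once via the first length-$x$ edge of the adjacent triple), and $1$-growability to hold at an appropriately chosen low transition in the tail, matching the pattern established by the $b=x+2$ and $b=2x$ realizations in Lemma~\ref{lem:1x_misc}.
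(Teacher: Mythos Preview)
Your proposal is a plan rather than a proof, and the missing details are not routine---they are the entire content of the lemma. The specific issue is that the ``seven-vertex head'' $[x,2x,0,x+1,1,x+2,2]$ you lift from the $b=x+2$ construction of Lemma~\ref{lem:1x_misc} does \emph{not} survive the passage from $K_{2x+1}$ to $K_{2x+k+1}$ with $k\geq 1$: the edge $\{2x,0\}$ now has length $\min(2x,k+1)=k+1\notin\{1,x\}$, and the edges $\{0,x+1\}$ and $\{1,x+2\}$ now have length $x+1$ rather than $x$. You acknowledge that ``a slight variant'' is needed, but you never say what it is, and there is no obvious local repair: any head must avoid absolute differences landing in the forbidden window $(x, x+k]$, which rules out exactly the wrap-around tricks that make the $b=x+2$ head work. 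Without an actual head, the rest of the interpolation scheme cannot be checked, and your own summary (``the hard part will be exactly that bookkeeping'') concedes this.

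For comparison, the paper does split on the parity of $b$ (equivalently of your $k$), and does build each realization by concatenating three explicit sequences---a run of pairs, a run of triples, and a short connector---so your structural intuition is sound. But the paper parametrizes differently (writing $x=2r+2s+4$ or $x=2r+2s+6$ and letting $r$ control the number of triples), writes out every sequence in closed form, and then verifies directly which single edge is lengthened under the $m_1$-embedding and which $x$ edges are lengthened under the $m_x$-embedding. That verification is where the lemma is actually proved; your proposal stops before reaching it.
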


\begin{proof}
We consider odd~$b$ and even~$b$ separately, starting with odd~$b$.

Take $r$ in the range $0 \leq r \leq (x-4)/2$.  Write $x = 2r+2s+4$ for some $s \geq 0$.  We construct a realization for 
$$L = \{ 1^{2r+2s+2}  , x^{4r+2s+7} \} = \{1^{x-2}, x^{ x+ 2r+3} \}.$$
We have $v = (2r+2s+2) + (4r + 2s + 7) + 1 = 6r+4s+10$.

We build the required realization by concatenating three sequences.  First:
$$[ \ub{2r+1, 4r+2s+5}, \ub{4r+2s+6,2r+2}, \ub{2r+3,4r+2s+7}, \ldots, \ub{4r+4s+6, 2r+2s+2} ],$$
which has $2s+2$ pairs and produces edge-lengths~$\{ 1^{2s+1}, x^{2s+2} \}$.
Second:
\begin{flushleft}
$[ \ub{6r+4s+8, 4r+2s+4, 2r}, \ub{2r-1, 4r+2s+3, 6r+4s+7}, $
\end{flushleft}
\begin{flushright}
$ \ub{6r+4s+6, 4r+2s+2, 2r-2}, \ldots, \ub{4r+4s+8, 2r+2s+4, 0} ],$
\end{flushright}
which has $2r+1$ triples and produces~$\{1^{2r}, x^{4r+2} \}$.
Third:
$$[ 6r+4s+9, 2r+2s+3, 4r+4s+7 ]$$
which produces $\{ x^2 \}$.

Upon concatenation we have a difference of $x$ where the first and second sequences join and a difference of~1 where the second and third join.  Hence we have a realization of
$$L =  \{ 1^{2s+1}, x^{2s+2} \} \cup \{1^{2r}, x^{4r+2} \} \cup \{ x^2 \} \cup \{1,x\}  = \{ 1^{x-2}, x^{ x + 2r + 3} \}. $$

It is  1-growable at $v-2 = 6r+4s+8$: when embedding with $m=6r+4s+8$, the only lengthened edge is $(2r+2s+2, 6r+4s+8)$.  It is $x$-growable at $x-1 = 2r+2s+3$: when embedding with $m = 2r+2s+3$ the lengthened edges are $(i, i+x)$ for $0 \leq i \leq x-1$.

For the case with~$b$ even, first note that when~$x=4$ there are no values of~$b$ to be considered.
Let $x \geq 6$ be even and take $r$ in the range $0 \leq r \leq (x-6)/2$.  Write $x = 2r+2s+6$ for some $s \geq 0$.  We construct a realization for 
$$L = \{ 1^{2r+2s+4}  , x^{4r+2s+10} \} = \{1^{x-2}, x^{ x+ 2r+4} \}.$$
We have $v = (2r+2s+4) + (4r + 2s + 10) + 1 = 6r+4s+15$.

We build the required realization by concatenating three sequences.  First:
\begin{flushleft}
$[ \ub{4r+2s+11, 2r+5}, \ub{2r+6,4r+2s+12},$
\end{flushleft} 
\begin{flushright}
$\ub{4r+2s+13,2r+7}, \ldots, \ub{4r+4s+11, 2r+2s+5} ]$,
\end{flushright}
which has $2s+1$ pairs and produces the edge-lengths~$\{ 1^{2s}, x^{2s+1} \}$.
Second:
$$[2r+2s+6, 4r+4s+12, 4r+4s+13, 2r+2s+7, 1, 4r+2s+10, 2r+4, 2r+3, 4r+2s+9, 0  ]$$
which produces $\{ 1^2 , x^7  \}$.
Third:
\begin{flushleft}
$[ \ub{6r+4s+14, 4r+2s+8, 2r+2}, \ub{2r+1, 4r+2s+7, 6r+4s+13}, $
\end{flushleft}
\begin{flushright}
$ \ub{6r+4s+12, 4r+2s+6, 2r }, \ldots, \ub{4r+4s+14, 2r+2s+8, 2} ],$
\end{flushright}
which has $2r+1$ triples and produces~$\{1^{2r}, x^{4r+2} \}$.

Upon concatenation we have a difference of $1$ at each of the joins. Hence we have a realization of
$$L =  \{ 1^{2s}, x^{2s+1} \} \cup \{1^{2}, x^{7} \} \cup \{ 1^{2r}, x^{4r+2} \} \cup \{1^2\}  = \{ 1^{x-2}, x^{ x + 2r + 4} \}. $$

It is  1-growable at $1$: when embedding with $m=1$, the only lengthened edge is $(1,2r+2s+7)$.  It is $x$-growable at $x = 2r+2s+6$: when embedding with $m = 2r+2s+6$ the lengthened edges are $(i, i+x)$ for $1 \leq i \leq x$.
\end{proof}

\begin{exa}\label{ex:1x_xeven}
To construct a $\{1,8\}$-growable realization of  $\{1^6, 8^{13} \}$ using the proof of Lemma~\ref{lem:1x_xeven} we take
$r=s=1$ to obtain
$$[3,11,12,4,5,13,14,6,18,10,2,1,9,17,16,8,0,19,7,15],$$
which is $1$-growable at~$18$ and $8$-growable at~$7$.

To construct a $\{1,10\}$-growable realization of  $\{1^8, 10^{16} \}$ we take
$r=s=1$  to obtain
$$  [17,7,8,18,19,9,10,20,21,11,1,16,6,5,15,0,24,14,4,3,13,23,22,12,2] $$
which is $1$-growable at~$1$ and $10$-growable at~$10$.
\end{exa}

\begin{lem}\label{lem:1x_xodd}
Let $x \geq 5$ be odd.  There is a $\{1,x\}$-growable realization for $\{1^{x-2}, x^b \}$ for~$b$ in range $x+3 \leq b \leq 2x-1$.
\end{lem}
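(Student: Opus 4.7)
The plan is to follow the strategy of Lemma~\ref{lem:1x_xeven}, splitting into two subcases according to the parity of~$b$ since $b$ now ranges over both parities when $x$ is odd. In each subcase I would parametrize $b$ by an integer $r$ with a companion integer~$s$ chosen so that $x = 2r + 2s + c$ for a suitable constant~$c$, producing a family indexed by $0 \le r \le (x-5)/2$ that sweeps through all of the required values of~$b \in \{x+3,\ldots,2x-1\}$ of the chosen parity.

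For each value of~$r$ I would construct the realization as a concatenation of three explicit subsequences, structurally analogous to the construction in Lemma~\ref{lem:1x_xeven}. Two of the three pieces would be ``zigzag'' subsequences built from underbraced pairs $[t, t{+}x]$ or triples $[t, t{+}x, t{+}2x]$, producing batches of edge-lengths $\{1^{?}, x^{?}\}$ in the obvious pattern. The third, short, piece would serve as a parity-correcting ``bridge'' that supplies the last few copies of~$x$, inserts the extra~$1$'s needed, and glues the two zigzag pieces together so that the joins themselves contribute edges of length~$1$ (or $x$, as appropriate). As in the even case, after the multiset of edge-lengths is verified, the growability would be confirmed by identifying~$m_1$ and~$m_x$: $m_1$ is taken at one endpoint of the whole sequence so that a single edge across the embedding gets stretched, and $m_x$ is taken at $x$ or $x-1$, where the first zigzag piece is set up precisely so that the lengthened edges under the embedding are exactly $(i, i+x)$ for~$i$ in the consecutive range of length~$x$ starting at~$0$ or~$1$.

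The main obstacle will be designing the bridging subsequence correctly. In the even~$x$ case, the parity of endpoints of each pair $[t, t+x]$ is preserved, which makes the two zigzag parts fit together cleanly. For odd~$x$ the endpoints of each such pair have opposite parities, so the natural ``interleaved'' layout used in Lemma~\ref{lem:1x_xeven} shifts at each step, and the junctions between the three subsequences must be adjusted by a small explicit block (analogous to the $10$-element block in the even case) whose length and internal edge-lengths depend on the parity of~$b$. Getting this block to simultaneously (i)~use only vertices not used elsewhere, (ii)~produce the correct leftover multiset $\{1^{\text{small}}, x^{\text{small}}\}$, and (iii)~preserve the positions needed for growability at the chosen $m_1$ and~$m_x$, is the delicate step; I would expect to find the pattern by working out $x = 5$ and $x = 7$ by hand (assisted by the heuristic search mentioned in the introduction) and then verifying that the resulting formula extends to all odd $x \geq 5$ by direct calculation.

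Finally, I would verify the two endpoint values $b = x+3$ and $b = 2x-1$ fit into the parametrization with $r=0$ and $r=(x-5)/2$ respectively, and check that every intermediate value of~$b$ of each parity arises exactly once; this is the same bookkeeping as in Lemma~\ref{lem:1x_xeven} and should be routine.
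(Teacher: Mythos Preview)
Your plan is correct and matches the paper's proof essentially line for line: the paper also splits by the parity of~$b$, writes $x=2r+2s+5$ with $0\le r\le (x-5)/2$, and builds each realization as a concatenation of three pieces (a run of triples, a run of pairs, and a short bridge), then checks $1$-growability at $v-2$ and $x$-growability at $x-1$. The only minor refinement is that the bridges needed here turn out to be shorter than you might expect from the even-$x$ analogy---six elements in the odd-$b$ case and three in the even-$b$ case---so the hand computation for $x=5,7$ should converge quickly.
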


\begin{proof}
The constructions are similar to those of Lemma~\ref{lem:1x_xeven} in that they each are built from the concatenation of three sequences and we need to consider odd and even~$b$ separately.  We start with odd~$b$.

Take~$r$ in the range~$0 \leq r \leq (x-5)/2$.  
Write $x = 2r + 2s +5$ for some $s \geq 0$.  We construct a realization for 
$$L = \{ 1^{2r+2s+3}, (2r+2s+5)^{4r+2s+9} \} = \{1^{x-2}, x^{x+2r+4} \}.$$ 
We have $v = (2r+2s+3) + (4r+2s+9) + 1 = 6r + 4s + 13$.

The first sequence is 
\begin{flushleft}
$[ \ub{6r+4s+10, 4r+2s+5, 2r}, \ub{2r-1, 4r+2s+4, 6r+4s+9}, $
\end{flushleft}
\begin{flushright}
$ \ub{6r+4s+8, 4r+2s+3,2r-2}, \ldots, \ub{4r+4s+10,2r+2r+5, 0} ]$. 
\end{flushright}
There are $2r+1$ triples, so this sequence produces edge-lengths $\{1^{2r}, x^{4r+2} \}$.
The second sequence is
$$[ 6r+4s+12, 2r+2s+4, 4r+4s+9, 4r+4s+8, 2r+2s+3, 6r+4s+11 ].$$
This has internal differences 
$[ 2r+2s+5 , 2r+2s+5, 1, 2r+2s+5, 2r+2s+5 ] $
and so produces $\{ 1, x^4 \}$.
The third sequence is
$$[ \ub{4r+2s+6, 2r+1}, \ub{2r+2, 4r+2s+7}, \ub{4r+2s+8,2r+3}, \ldots, \ub{ 2r+2s+2, 4r+4s+7 }]. $$
There are $2s+2$ pairs, so this sequence produces $\{ 1^{2s+1}, x^{2s+2} \}$. 

Upon concatenation, we have a difference of 1 generated where the first and second sequences join and a difference of $2r+2s+5 = x$ where the second and third join.  Hence we have a realization of
$$L =  \{1^{2r}, x^{4r+2} \} \cup \{ 1, x^4 \} \cup \{ 1^{2s+1}, x^{2s+2} \} \cup \{1,x\} = \{ 1^{x-2} , x^{x+2r+4}  \}.$$

The realization is 1-growable at $v-2 = 6r+4s+11$: when embedding with $m=6r+4s+11$, the only lengthened edge is $(2r+2s+3, 6r+4s+11)$.  It is $x$-growable at $x-1 = 2r+2s+4$: when embedding with $m = 2r+2s+4$ the lengthened edges are $(i, i+x)$ for $0 \leq i \leq x-1$.

Moving to even~$b$, take~$r$ in the range~$0 \leq r \leq (x-5)/2$ and  
write $x = 2r + 2s +5$ for some $s \geq 0$.  We construct a realization for 
$$L = \{ 1^{2r+2s+3}, (2r+2s+5)^{4r+2s+8} \} = \{1^{x-2}, x^{x+2r+3} \}.$$ 
We have $v = (2r+2s+3) + (4r+2s+8) + 1 = 6r + 4s + 12$.

The first sequence is
$$[ \ub{4r+2s+6, 2r+1}, \ub{2r+2,4r+2s+7}, \ub{4r+2s+8,2r+3}, \ldots, \ub{4r+4s+8,2r+2s+3}  ].$$
There are $2s+3$ pairs, so this sequence produces edge-lengths $\{ 1^{2s+2}, x^{2s+3} \}$. 
The second sequence is the same as the first sequence of the previous construction:
\begin{flushleft}
$[ \ub{6r+4s+10, 4r+2s+5, 2r}, \ub{2r-1, 4r+2s+4, 6r+4s+9}, $
\end{flushleft}
\begin{flushright}
$ \ub{6r+4s+8, 4r+2s+3,2r-2}, \ldots, \ub{4r+4s+10,2r+2r+5, 0} ]$. 
\end{flushright}
As before, there are $2r+1$ triples, so this sequence produces $\{1^{2r}, x^{4r+2} \}$.
The third sequence is
$$[ 6r+4s+11, 2r+2s+4, 4r+4s+9 ]$$
which has internal differences
$[ 2r+2s+5, 2r+2s+5 ]$
and so produces~$\{x^2 \}$.

Upon concatenation, we have a difference of $2r+2s+5 = x$ generated where the first and second sequences join and a difference of 1 where the second and third join.  Hence we have a realization of
$$L =  \{1^{2s+2}, x^{2s+3} \} \cup \{ 1^{2r}, x^{4r+2} \} \cup \{x^{2} \} \cup \{1,x\} = \{ 1^{x-2} , x^{x+2r+3}  \}.$$

The realization is 1-growable at $v-2 = 6r+4s+10$: when embedding with $m=6r+4s+10$, the only lengthened edge is $(2r+2s+3, 6r+4s+10)$.  It is $x$-growable at $x-1 = 2r+2s+4$: when embedding with $m = 2r+2s+4$ the lengthened edges are $(i, i+x)$ for $0 \leq i \leq x-1$.
\end{proof}

\begin{exa}\label{ex:1x_xodd}
To construct a $\{1,13\}$-growable realization of $\{1^{11},13^{21}\}$ using the proof of Lemma~\ref{lem:1x_xodd} we take
$r=s=2$ to obtain
\begin{flushleft}
$[ 30,17,4,3,16,29,28,15,2,1,14,27,26,13,0,  $
\end{flushleft}
\begin{flushright}
$ 32,12,25,24,11,31,18,5,6,19,20,7,8,21,22,9,10,23]$,
\end{flushright}  
which is $1$-growable at~$31$ and $13$-growable at~$12$.

To construct a $\{1,13\}$-growable realization of $\{1^{7},9^{14} \}$ we take
$r=s=1$ to obtain
$$[ 12, 3, 4, 13, 14, 5, 6, 15, 16, 7, 20, 11, 2, 1, 10, 19, 18, 9, 0, 21, 8, 17 ]$$
which is $1$-growable at~$20$ and $9$-growable at~$8$.
\end{exa}

We can now prove the main result of the section.

\begin{thm}\label{th:1_x_2x}
Let $L = \{1^a, x^b, (2x)^c \}$.  If $a \geq x-2$, $c$ is even and $b \geq 5x-2+c/2$, then $\BHR(L)$ holds.
\end{thm}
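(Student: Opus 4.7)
The plan is to construct a $\{1,x\}$-growable cyclic realization of $L = \{1^a, x^b, (2x)^c\}$ by starting from a small $\{1,x\}$-growable realization of the form $\{1^{a_0}, x^{b_0}\}$ and then growing it via Lemma~\ref{lem:x_2x} to introduce the $(2x)$-edges, with final padding via Theorem~\ref{th:grow}. The cases $x \leq 2$ follow from Theorem~\ref{th:known}.1--2, and the case $x = 3$ is exactly Lemma~\ref{lem:136} (whose bound $13 + c/2$ coincides with $5x-2+c/2$), so I need only treat $x \geq 4$.

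For $x \geq 4$, Lemmas~\ref{lem:1x_misc}, \ref{lem:1x_xeven} and \ref{lem:1x_xodd} jointly provide a $\{1,x\}$-growable realization of $\{1^{x-2}, x^{b_0}\}$ for every $b_0 \in \{x+2, x+3, \ldots, 2x\}$, while Lemma~\ref{lem:1x_misc} also gives one for $\{1^{x-1}, x^{x+1}\}$. As $b_0$ runs through $\{x+1, x+2, \ldots, 2x\}$ these base realizations hit every residue class modulo $x$ exactly once, so I can pick $b_0$ with $b_0 \equiv b + c \pmod{x}$ and take the corresponding realization. The only class requiring $a_0 = x-1$ instead of $a_0 = x-2$ is $b+c \equiv 1 \pmod{x}$, and this class is excluded whenever $a = x-2$: in that situation $x$ would divide $v = a + b + c + 1$, but the $b+c$ multiples of $x$ in $L$ would already exceed the allowed $v - x = b + c - 1$. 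So $a \geq a_0$ in every admissible case.

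Next, write $c/2 = qx + r$ with $q \geq 0$ and $0 \leq r \leq x-1$. If $r > 0$, apply Lemma~\ref{lem:x_2x} once with $i = r$; then, regardless of $r$, apply Lemma~\ref{lem:x_2x} a further $q$ times with $i = x$. The output is a $\{1,x\}$-growable realization with exactly $c$ copies of $2x$, and its count of $x$'s is at most $b_0 + (3x - 2r) + xq = b_0 + 3x - 3r + c/2$, which (using $b_0 \leq 2x$) is maximized at $r = 1$ by $5x - 3 + c/2$. The hypothesis $b \geq 5x - 2 + c/2$ therefore leaves room to apply Theorem~\ref{th:grow} with $x' = x$ to bring the $x$-count up to $b$ and with $x' = 1$ to bring the $1$-count up to $a$; the number of further $x$-edges required is a non-negative multiple of $x$ because $b_0 \equiv b + c \pmod{x}$.

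The main obstacle I expect is the bookkeeping: matching the residue class $b + c \equiv 1 \pmod{x}$ against the admissibility hypothesis so that the lone base realization with $a_0 = x - 1$ is invoked only when $a \geq x - 1$, and tracking counts carefully enough through the cascade of Lemma~\ref{lem:x_2x} applications to confirm the advertised lower bound on $b$.
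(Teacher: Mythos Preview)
Your proposal is correct and follows essentially the same approach as the paper: reduce to $x\geq 4$, start from the $\{1,x\}$-growable realizations of $\{1^{a_0},x^{b_0}\}$ supplied by Lemmas~\ref{lem:1x_misc}--\ref{lem:1x_xodd} with $b_0$ chosen in the right residue class, use one application of Lemma~\ref{lem:x_2x} with $i=r$ to fix $c\pmod{2x}$ and further applications with $i=x$ to reach~$c$, then finish with Theorem~\ref{th:grow}. Your admissibility argument for the case $b+c\equiv 1\pmod{x}$ and your bookkeeping of the $x$-count are in fact more explicit than the paper's, but the strategy is identical.
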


\begin{proof}
If $x \leq 2$, then $\BHR(L)$ holds without restriction and the case $x=3$ is covered in Lemma~\ref{lem:136}, so assume $x \geq 4$.  We follow the method of proof of Lemma~\ref{lem:136}, with Lemmas~\ref{lem:1x_misc},~\ref{lem:1x_xeven} and~\ref{lem:1x_xodd} providing the realizations to get started.

Take~$i$ in the range $0 \leq i < c/2$ such that $2i \equiv c \pmod{2x}$.

To construct the required realization for~$L$, start with the realization of $\{1^{a'}, x^{b'} \}$ that has $b' \equiv b +2i  \pmod{x}$ given by Lemma~\ref{lem:1x_misc},~\ref{lem:1x_xeven} or~\ref{lem:1x_xodd}.  So $a' = x-2$, except when $b+2i \equiv 1 \pmod{x}$ and admissibility forces us to use $a' = x-1$.

If $c \not\equiv 0 \pmod{2x}$, then apply Lemma~\ref{lem:x_2x} using~$i$ to give a realization whose number of occurrences~$c'$ of $2x$ differs from~$c$ by a multiple of~$2x$ and whose number of occurrences of~$x$ differs from~$b$ by a multiple of~$x$.  (If $c \equiv 0 \pmod{2x}$, then this is already the case.)  

Apply Lemma~\ref{lem:x_2x} a further $(c-c')/2x$ times with~$i=x$ to obtain a $\{1,x\}$-growable realization of $\{1^{a'}, x^{b''}, (2x)^c \}$ where $b'' \equiv b \pmod{x}$.  Complete to the required realization using the appropriate number of applications of Theorem~\ref{th:grow} with $1$ and~$x$.

The method requires up to $2x$ occurrences of $x$ in the initial realization, up to $3x-2$ occurrences of~$x$ to adjust the congruency class of the number of occurrences of~$2x$, and $c/2$ occurrences of~$x$ to obtain the correct number of occurrences of~$2x$.  Hence it always works for $b \geq 2x+3x-2+c/2 = 5x-2 + c/2$.
\end{proof}

When~$c$ is odd, we are not aware of any reason why the same approach will not work.  However, without new ideas, it will take more work to get weaker results than in the even case.  This is because the starter realizations now need to be for $\{1^{x-2}, x^b, 2x \}$, which means that we must have $v \geq 4x$, compared to the constructions here which all have $v < 3x$.  As well as being larger, using the same approach as Lemmas~\ref{lem:1x_xeven} and~\ref{lem:1x_xodd} would probably take more cases to cover all required values of~$b$.  Some of these issues are already apparent in Lemma~\ref{lem:136}.

Lemma~\ref{lem:x_2x} can be thought of as combining the notion of growability with that of a particular perfect realization.  This can be generalized to other perfect realizations, which we now do.

For a multiset $L$, define $sL = \{ sy : y \in L \}$. When we apply Theorem~\ref{th:grow} $k$ times to an $x$-growable realization we produce a realization with the $x$ subsequences 
$$[ m+1-x, m+1, m+1+x, \ldots, m+1+ (k-1)x] +t$$
for $0 \leq t \leq x-1$.
If we have a perfect linear realization of length~$k$ of a multiset~$L$, then we can multiply each element by $x$ to get a sequence that realizes $xL$ and then take a translate of it to replace a subsequence of the above form.  

Lemma~\ref{lem:x_2x} uses this process with the perfect linear realization $[0, 2,1, 3]$ of $\{1,2^2 \}$.  In general the approach gives the following lemma. 

\begin{lem}\label{lem:perf_grow}
Let $L$ have an $X$-growable realization with $x \in X$.  Let $L_1, \ldots, L_x$ be multisets of size $k-1$ that have perfect linear realizations.  Then 
$$L \cup xL_1 \cup \cdots \cup xL_x$$ 
has an $X$-growable realization.
\end{lem}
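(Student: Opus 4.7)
The plan is to carry out in detail the construction sketched in the paragraph preceding the lemma; the main technical point is that $X$-growability survives the substitution step.

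First I would apply Theorem~\ref{th:grow} exactly $k-1$ times with parameter $x$ to the given $X$-growable realization of $L$. The last sentence of the proof of Theorem~\ref{th:grow} inductively implies that the resulting realization remains $X$-growable and that the original witnessing position $m$ for $x$-growability is preserved throughout. A straightforward induction on the number of applications also shows that, for $1 \le i \le x$, each of the original $x$ growable vertices $y_i := m + i - x$ now heads its own pairwise-disjoint run
$$R_i := [y_i,\, y_i + x,\, y_i + 2x,\, \ldots,\, y_i + (k-1)x],$$
whose $k-1$ consecutive edges are all of length $x$. Thus we have an $X$-growable realization of $L \cup \{x^{x(k-1)}\}$.

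Next, for each $i \in \{1, \ldots, x\}$, let $[h^{(i)}_0, h^{(i)}_1, \ldots, h^{(i)}_{k-1}]$ be a perfect linear realization of $L_i$, with $h^{(i)}_0 = 0$ and $h^{(i)}_{k-1} = k-1$. Multiplying through by $x$ and translating by $y_i$ produces a sequence on the same vertex set $\{y_i, y_i + x, \ldots, y_i + (k-1)x\}$ as $R_i$, with the same endpoints $y_i$ and $y_i + (k-1)x$, whose edge-length multiset is $xL_i$. I substitute this new sequence for $R_i$ inside the intermediate realization. Since vertex set and endpoints agree, the result is still a Hamiltonian path, and each $R_i$'s contribution $\{x^{k-1}\}$ to the edge multiset is replaced by $xL_i$, so the final edge multiset is $L \cup xL_1 \cup \cdots \cup xL_x$.

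The main obstacle is verifying that the substitutions preserve $X$-growability. The key observation is that every run vertex $y_i + jx$ with $1 \le j \le k-1$ satisfies $y_i + jx > m$, and every substituted edge has both of its endpoints in the same run. For any growability position $m'' \le m$ of the intermediate realization, the critical range $(m'' - x', m'']$ can meet a run only at its left endpoint $y_i$; at $y_i$ the substitution changes the interior-side neighbor from $y_i + x$ to $y_i + x h^{(i)}_1$, but both candidates lie above $m \ge m''$ and are therefore shifted by any $x'$-embedding at $m''$, making the interior-side edge length-increasing in both the pre- and post-substitution realizations, while the exterior-side neighbor of $y_i$ is untouched; hence $y_i$ retains exactly one length-increasing edge. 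For a growability position $m_{x'} + (k-1)x$ inherited from an original position $m_{x'} > m$, every run vertex satisfies $y_i + jx \le m + (k-1)x < m_{x'} + (k-1)x$, so both endpoints of every substituted edge are fixed by the $x'$-embedding at $m_{x'} + (k-1)x$, and the length-increasing status of every edge is identical before and after substitution; thus $x'$-growability at $m_{x'} + (k-1)x$ transfers directly, completing the argument.
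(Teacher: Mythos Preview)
Your proof is correct and follows the same route as the paper's: grow the $x$-runs, then replace each run by a dilated-and-translated perfect linear realization. In fact your treatment is more thorough than the paper's, which states the lemma immediately after a one-paragraph sketch and defers the growability check to the terse sentence at the end of the proof of Lemma~\ref{lem:x_2x}; your case analysis (positions $m''\le m$ versus shifted positions $m_{x'}+(k-1)x$) makes explicit exactly why the substitution cannot create or destroy a length-increasing edge at any witnessing position, and your use of $k-1$ applications of Theorem~\ref{th:grow} (rather than the ``$k$ times'' in the paper's sketch) is the count that actually matches the multiset sizes and agrees with the proof of Lemma~\ref{lem:x_2x}.
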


If we use the perfect linear realization $[0,1]$ of $\{1\}$ in Lemma~\ref{lem:perf_grow}, then we end up back at Theorem~\ref{th:grow}.

\begin{exa}\label{ex:perf_grow}
Let $x \geq 3$.  In this section we have constructed $\{1,x\}$-growable realizations of the multisets~$\{1^{x-1}, x^{x+1} \}$ and $\{ 1^{x-2}, x^b \}$ for $x+2 \leq b \leq 2x$.  Let $c,d,e,f,g \geq 0$ with $c+d+e+f+g \equiv 0 \pmod{x}$.  Take $c$ copies of the perfect linear realization $[0,1,2,3,4,5]$ of $\{1^5\}$, $d$ copies of the perfect realization $[0,2,1,3,4,5]$ of $\{1^3,2^2\}$, $e$ copies of the perfect linear realization $[0,3,1,2,4,5]$ of $\{1^2,2^2,3\}$, $f$ copies of the perfect linear realization $[0,3,1,4,2,5]$ of $\{2^2,3^3\}$, and $g$ copies of the perfect linear realization $[0,2,4,1,3,5]$ of $\{2^4,3\}$.  Lemma~\ref{lem:perf_grow} proves $\BHR(L)$ for 
$$L = \{ 1^a, x^{b + 5c + 3d + 2e}  , (2x)^{2d+2e+2f+4g}, (3x)^{e+3f+g} \}$$ 
for $a \geq x-2$ and $b \geq x+1$.
\end{exa}

\section*{Acknowledgements}
The second and the third author were partially supported by INdAM-GNSAGA.

\end{document}